\documentclass[11pt,reqno]{amsart}
 \usepackage[margin=1in]{geometry} 
\usepackage{amsmath,amsthm,amssymb,amsfonts,graphicx, fullpage, amsmath, url, verbatim, amssymb}

  \textwidth 6.4in

\usepackage{bbm}
\usepackage{color}
\usepackage{afterpage}

\newcommand{\R}{\mathbb{R}}

\newcommand{\al}{\alpha}

\newcommand{\pa}{\partial}

\setcounter{section}{0}
\numberwithin{equation}{section}
\newtheorem{theorem}{Theorem}[section]
\newtheorem{lemma}[theorem]{Lemma}
\newtheorem{corollary}[theorem]{Corollary}
\newtheorem{proposition}[theorem]{Proposition}

\newtheorem{remark}[theorem]{Remark}

\newtheorem{question}{Question}

\def\XXint#1#2#3{{\setbox0=\hbox{$#1{#2#3}{\int}$ }
\vcenter{\hbox{$#2#3$ }}\kern-.6\wd0}}

\definecolor{purple}{rgb}{0.8, 0, 1}
\definecolor{orange}{rgb}{1,.5,0}
\definecolor{purple}{rgb}{0.5, 0, 0.9}
\definecolor{green}{rgb}{0, 0.7, 0}
\definecolor{orange}{rgb}{1,.5,0}
\definecolor{gray}{rgb}{.6,.6,.6}



\parindent 1em
\parskip 1ex

\begin{document}

\title{Remarks on stationary and uniformly-rotating vortex sheets: Rigidity results}

\author{Javier G\'omez-Serrano, Jaemin Park, Jia Shi and Yao Yao}

\begin{abstract}

In this paper, we show that the only  solution of the vortex sheet equation, either stationary or uniformly rotating with negative angular velocity $\Omega$, such that it has  positive vorticity and is  concentrated in a finite disjoint union of smooth curves with finite length is the trivial one: constant vorticity amplitude supported on a union of nested, concentric circles. The proof follows a desingularization argument and a calculus of variations flavor.

\vskip 0.3cm

\textit{Keywords: incompressible, vortex sheet, stationary, desingularization}

\end{abstract}

\maketitle

\section{Introduction}

A vortex sheet is a weak solution of the 2D Euler equations:
\begin{equation}\label{eqs:euler}
v_t + v \cdot \nabla v = -\nabla p, \quad \nabla \cdot v = 0,
\end{equation}
 whose vorticity $\omega = \text{curl}(v)$ is a delta function supported on a curve or a finite number of curves $\Gamma_i = z_i(\al,t)$, i.e.
 \begin{equation}\label{omega_def} \omega(x,t) = \sum_{i} \varpi_i(\al,t) \delta(x - z_i(\al,t)).
\end{equation}
 Here  $\varpi_i(\al,t)$ is the vorticity strength on $\Gamma_i$ with respect to the  parametrization $z_i$, and the above equation is understood in the sense that 
 \[
 \int_{\mathbb{R}^2} \varphi(x) d\omega(x,t) = \sum_{i} \int \varphi(z_i(\alpha,t))\varpi_i(\alpha,t) d\alpha
 \]
  for all test functions $\varphi(x)\in C_0^\infty(\mathbb{R}^2)$.

The motivation of the study of the equation \eqref{eqs:euler} with vortex sheet initial data comes from the fact that in fluids with small viscosity, flows separate from rigid walls and corners \cite{Majda-Bertozzi:vorticity-incompressible-flow,Saffman:book-vortex-dynamics}. To model it, one may think of a solution to \eqref{eqs:euler} with one incompressible fluid where the velocity changes sign in a discontinuous (tangential) way across a streamline $z$. This discontinuity induces vorticity in $z$.

 The equations of motion of $\varpi_i$ and $z_i$ can be derived by means of the Birkhoff-Rott operator (\cite{Castro-Cordoba-Gancedo:naive-vortex-sheet,Lopes-Nussenzveig-Schochet:vortex-sheets-BR-formulation,Majda-Bertozzi:vorticity-incompressible-flow,Sulem-Sulem-Bardos-Frisch:instability-vortex-sheet}), namely:
\begin{equation}\label{def_BR0}
BR(z,\varpi)(x,t) =\frac{1}{2\pi}
PV\int \frac{(x-z(\beta,t))^{\bot}}{|x-
z(\beta,t)|^2}\varpi(\beta,t)d\beta,
\end{equation}
yielding
\begin{align}
\partial_t z_i(\al,t) & = \sum_j BR(z_j,\varpi_i)(z_i(\al,t)) + c_i(\al,t) \pa_\al z_i(\al,t) \label{eqs:vsheet1} \\
\partial_t \varpi_i(\al,t) & = \pa_\al(c_i(\al,t) \varpi_i(\al,t)), \label{eqs:vsheet}
\end{align}
where the term $c_i(\al,t)$ accounts for the reparametrization freedom of the curves. See the paper \cite{Izosimov-Khesin:vortex-sheets} by Izosimov--Khesin where they propose geodesic,  group-theoretic,  and Hamiltonian frameworks for their description.

The main goal of this paper is to establish radial symmetry properties of stationary/uniformly-rotating vortex sheets to \eqref{eqs:euler}. To do so, we first define what we mean by a stationary vortex sheet. Assume the initial data $\omega_0$ of \eqref{omega_def} is supported on a finite number of curves parametrized by $z_i(\alpha)$, with strength $\varpi_i(\alpha)$ (with respect to the parametrization $z_i$) respectively. If there exists some reparametrization choice $c_i(\alpha)$ such that the right hand sides of \eqref{eqs:vsheet1}--\eqref{eqs:vsheet} are both identically zero for every $i$, it gives that $\omega(\cdot,t)$ is invariant in time, and we say $\omega(\cdot,t)=\omega_0$ is a \emph{stationary vortex sheet}.

For any $x\in \mathbb{R}^2$ and $\Omega\in\mathbb{R}$, let $R_{\Omega t} x$ denote the rotation of $x$ counter-clockwise by angle $\Omega t$ about the origin. We say $\omega(x,t)=\omega_0(R_{\Omega t} x)$ is a \emph{uniformly-rotating vortex sheet} with angular velocity $\Omega$ if $\omega_0$ is stationary in the rotating frame with angular velocity $\Omega$. (Note that in the special case $\Omega=0$, the uniformly-rotating sheet is in fact stationary.) In Lemma~\ref{lemma_br_eq}, we will derive the equations satisfied by a stationary/rotating vortex sheet.

It is easy to see that if the $z_i$'s are concentric circles with constant $\varpi_i$ (with respect to the constant-speed parametrization) for every $i$, the solution is stationary, and it is also uniformly-rotating with any $\Omega\in\mathbb{R}$. We would like to understand the reverse implication, namely:

\begin{question}
Under what conditions must a stationary/uniformly-rotating vortex sheet be radially symmetric?
\end{question}

This type of rigidity question has been very lately understood for different equations and different settings such as in the papers by Koch--Nadirashvili--Sverak \cite{Koch-Nadirashvili-Seregin-Sverak:liouville-navier-stokes} for Navier-Stokes, Hamel--Nadirashvili \cite{Hamel-Nadirashvili:rigidity-euler-annulus,Hamel-Nadirashvili:shear-flow-euler-strip-halfspace,Hamel-Nadirashvili:liouville-euler} for the 2D Euler equation on a strip, punctured disk or the full plane, G\'omez-Serrano--Park--Shi--Yao \cite{GomezSerrano-Park-Shi-Yao:radial-symmetry-stationary-solutions} for the 2D Euler and modified SQG in the full plane  and Constantin--Drivas--Ginsberg \cite{Constantin-Drivas-Ginsberg:rigidity-flexibility} for the 2D and 3D Euler, as well as the 2D Boussinesq and the 3D Magnetohydrostatic (MHS) equations.

The next theorem is the main result of the paper, solving it for the vortex sheet equations:

\begin{theorem}\label{thmA}
Let $\omega(x,t)=\omega_0(R_{\Omega t} x)$ be a stationary/uniformly-rotating vortex sheet with angular velocity $\Omega$. Assume that $\omega_0$ is concentrated on $\Gamma$, which is a finite union of smooth curves, and $\omega_0$ has positive vorticity strength on $\Gamma$. (See \textbf{\textup{(H1)}}--\textbf{\textup{(H3)}} in Section~\ref{sec2} for the precise regularity and positivity assumptions.) 

If $\Omega\leq 0$, $\Gamma$ must be a union of concentric circles, and $\omega_0$ must have constant strength along each circle (with respect to the constant-speed parametrization). In addition, if $\Omega<0$, all circles must be centered at the origin.
\end{theorem}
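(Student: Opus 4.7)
The plan is to reformulate the stationary/uniformly-rotating condition as a Dirichlet-type problem for the stream function off the curves, and then extract rigidity from integral identities of Pohozaev flavor; desingularization enters to make those identities rigorous at the singular vortex-sheet level. Define the stream function $\psi(x) = -\frac{1}{2\pi}\int \log|x-y|\,d\omega_0(y)$. Since $\omega_0$ is supported on the smooth curves $\Gamma_i$ with positive strengths $\varpi_i$, $\psi$ is continuous on $\R^2$, smooth off $\Gamma$, and $\nabla\psi$ has only a normal-component jump across each $\Gamma_i$, of magnitude proportional to $\varpi_i$. Using this, together with the Birkhoff--Rott formulation of \eqref{eqs:vsheet1}--\eqref{eqs:vsheet} (Lemma~\ref{lemma_br_eq}), the assumption that $\omega_0$ is stationary in the frame rotating with angular velocity $\Omega$ should become equivalent to
\[
\Psi(x) := \psi(x) + \tfrac{\Omega}{2}|x|^2 \equiv c_i \quad \text{on each } \Gamma_i,
\]
for some constants $c_i$.

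On each open component of $\R^2 \setminus \Gamma$ one has $-\Delta \Psi = -2\Omega$. I would test this equation against two vector fields: the radial field giving $x\cdot \nabla \Psi$, and the rotational field giving $x^\perp \cdot \nabla \Psi$. Integrating over each component, summing, and sending the outer radius to infinity (controlled by the logarithmic decay of $\psi$ coming from the total circulation), one obtains two families of identities. Because $\Psi\equiv c_i$ on $\Gamma_i$ the tangential derivative of $\Psi$ vanishes there, so only the normal derivative contributes, and on $\Gamma_i$ this normal derivative is essentially $\pm \varpi_i/2$ on the two sides. The identities then couple $\Omega$, the constants $c_i$, and geometric moments of the curves (total circulations, weighted second moments, signed areas).

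The point of the sign assumption $\Omega \le 0$, combined with the positivity of $\varpi_i$, is precisely to make these identities into one-sided inequalities that reduce to a rearrangement- or Cauchy--Schwarz-type saturation statement. Saturation then forces each $\Gamma_i$ to be a level set of $|x - x_0|$ for some common center $x_0$, i.e.\ a union of concentric circles, with $\varpi_i$ constant along each (with respect to the constant-speed parametrization). The $x\cdot \nabla \Psi$ identity, in which the $\frac{\Omega}{2}|x|^2$ term in $\Psi$ is sensitive to translations, gives the additional rigidity $x_0 = 0$ in the strict case $\Omega < 0$, whereas for $\Omega = 0$ that term disappears and the center is undetermined.

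The main obstacle is making the above identities rigorous when $\omega_0$ is a measure concentrated on curves: $\nabla \Psi$ has jumps across $\Gamma$, $\Delta \Psi$ produces distributions supported on $\Gamma$, and naive integration by parts is not allowed. This is where the desingularization enters: mollify $\omega_0$ at scale $\epsilon$ to obtain a smooth $\omega_0^{\epsilon}$ and stream function $\Psi^\epsilon$, establish the analogous identities cleanly for $\Psi^\epsilon$, and pass to the limit $\epsilon\to 0$. The regularity and positivity hypotheses \textbf{(H1)}--\textbf{(H3)} on $\Gamma$ are precisely what is needed to ensure uniform separation of the $\Gamma_i$'s, uniform control of the curve integrals, and convergence of the boundary terms, so that the limiting identities retain exactly the structure that forces radial symmetry.
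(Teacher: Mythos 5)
Your overall architecture (recast stationarity via the stream function, derive integral identities, desingularize to justify them) points in the right direction, but the core mechanism that actually forces rigidity is missing. Testing $-\Delta\Psi=-2\Omega$ against the multipliers $x\cdot\nabla\Psi$ and $x^{\perp}\cdot\nabla\Psi$ produces virial/Pohozaev identities that hold for \emph{every} configuration with the given circulations and moments; they are identities, not inequalities, and there is no Cauchy--Schwarz or rearrangement statement to ``saturate.'' You assert that saturation forces each $\Gamma_i$ to be a circle, but you never exhibit a sign-definite quantity that vanishes only on circles. The paper's proof supplies exactly this: the perturbation field is not $x$ alone but $\mathbf{u}^\epsilon=x+\nabla p^\epsilon$, where $p^\epsilon$ solves $\Delta p^\epsilon=-2$ in the thin layer $D^\epsilon$ with carefully chosen boundary values (Section~\ref{subsec_p}), and the strict positivity of the first variation $\tilde I^\epsilon$ for non-radial $\Gamma$ comes from a Talenti-type rearrangement inequality $\int_{D_i^\epsilon}p^\epsilon\le |D_i^\epsilon|^2/(4\pi)$ combined with the isoperimetric inequality and Cauchy--Schwarz on $\int\gamma_i\int\gamma_i^{-1}$ (Proposition~\ref{prop2_i}). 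Without the correction $\nabla p^\epsilon$ (or some substitute), your identities cannot distinguish a circle from a non-circle, so the argument does not close.

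Two further gaps. First, your reformulation ``stationarity $\iff \Psi\equiv c_i$ on $\Gamma_i$'' captures only the normal condition \eqref{BR1}; stationarity also imposes the tangential condition \eqref{BR2} (and, for open curves, that the full relative velocity vanishes). The paper needs \eqref{BR2} in Proposition~\ref{prop1_i} to show the first variation of the true solution tends to zero, and the conclusion that $\gamma_i$ is constant cannot be reached from the level-set condition alone. Second, the desingularization is not a generic mollification: the paper replaces the sheet by a vortex patch of value $\epsilon^{-1}$ on a tubular neighborhood whose thickness is $\epsilon\gamma_i(\alpha)$, precisely so that the area element matches the sheet measure and the elliptic problem for $p^\epsilon$ makes sense on a domain. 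Crucially, the desingularized $\omega^\epsilon$ is \emph{not} stationary, so your ``identities for $\Psi^\epsilon$'' hold only up to errors; quantifying these errors as $O(\epsilon^b|\log\epsilon|)$ (Lemma~\ref{lemma_v}, Propositions~\ref{prop_p_curve}--\ref{P_pdecomposition}) is the technical heart of the paper and is not addressed by ``pass to the limit.''
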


We now go first over the history of the equations \eqref{eqs:vsheet1}--\eqref{eqs:vsheet}, focusing later on the case of steady solutions. The study of those solutions is important due to the ill-posedness of the vortex sheet equation, thus they represent (unstable) structures for which there is global existence.

\subsection{Brief history of the dynamical problem}

The existence of solutions to \eqref{eqs:vsheet1}--\eqref{eqs:vsheet} has been widely studied. The seminal paper of Delort \cite{Delort:vortex-sheet} proved global existence of weak solutions of \eqref{eqs:euler} for an initial velocity in $L^{2}_{loc}$ and a vorticity a positive Radon measure. Majda \cite{Majda:vortex-sheet} provided a simpler proof. See also the works by Schochet \cite{Schochet:weak-vorticity-formulation-2d-euler,Schochet:point-vortex-method-vortex-sheet}
 and Evans--Muller \cite{Evans-Muller:vortex-sheet}. All of them use the hypothesis that the vorticity has a definite sign.

If the vorticity does not have a sign, Lopes Filho--Nussenzveig Lopes--Xin proved existence in \cite{Lopes-Nussenzveig-Xin:vortex-sheets-reflection}, in the case where the system enjoys reflection symmetry. For the setting in which the curve $z_i$ is not closed and represented as a graph, Sulem--Sulem--Bardos--Frisch \cite{Sulem-Sulem-Bardos-Frisch:instability-vortex-sheet} proved local existence in the case of analytic initial data. 

The first sign of singularities with analytic initial data goes back to Moore \cite{Moore:singularity-vortex-sheet}, where he demonstrated that the curvature may blow up in finite time. Ebin \cite{Ebin:ill-posedness-vortex-sheet} showed ill-posedness in Sobolev spaces when $\gamma$ has a distinguished sign, and Duchon--Robert \cite{Duchon-Robert:global-vortex-sheet} proved global existence for a class of initial data in the unbounded setting. Caflisch--Orellana \cite{Caflisch-Orellana:singular-solutions-ill-posedness-vortex-sheet} also showed global existence for a class of initial data, as well as ill-posedness in $H^s$ for $s > \frac32$ and simplified the analysis of Moore \cite{Caflisch-Orellana:long-time-existence-vortex-sheet}. We also mention here the work of Wu \cite{Wu:vortex-sheet}, in which she proved the existence of solutions to \eqref{eqs:vsheet1}--\eqref{eqs:vsheet} in spaces which are less regular than $H^s$. Sz\'ekelyhidi \cite{Szekelyhidi:weak-solutions-vortex-sheet} (resp. Mengual--Sz\'ekelyhidi \cite{Mengual-Szekelyhidi:vortex-sheet}) constructed infinitely many admissible weak solutions to \eqref{eqs:euler} for vortex sheet initial data with (resp. without necessarily) a distinguished sign.

\subsection{Stationary and rotating solutions}

Relative equilibria are an important family of solutions of fluid equations since their structures persist for long times. This is specially important when the equations of motion are ill-posed. In the particular case of \eqref{eqs:vsheet1}--\eqref{eqs:vsheet}, our knowledge is very small and only very few explicit cases are known: the circle and the straight line (with constant $\gamma$), which are stationary, and the segment of length $2a$ and density 
\begin{equation}\label{example_rotate}
\gamma(x) = \Omega \sqrt{a^2 - x^2}, \qquad x \in [-a,a],
\end{equation}
which is a rotating solution with angular velocity $\Omega$ \cite{Batchelor:book-fluid-dynamics}. 
Protas--Sakajo \cite{Protas-Sakajo:rotating-equilibria-vortex-sheet} generalized this solution and proved the existence of several others made out of segments rotating about a common center of rotation with endpoints at the vertices of a regular polygon by solving a Riemann-Hilbert problem, even finding some of them analytically.

In the paper \cite{GomezSerrano-Park-Shi-Yao:rotating-solutions-vortex-sheet} we prove the existence of a family of vortex sheet rotating solutions with non-constant vorticity density supported on a non-radial curve, bifurcating from the circle with constant density.

Numerically, some solutions have been computed before. O'Neil \cite{ONeil:relative-equilibria-vortex-sheets,ONeil:collapse-vortex-sheets} used point vortices to approximate the vortex sheet and compute uniformly rotating solutions and Elling \cite{Elling:vortex-sheet-cusps} constructed numerically self-similar vortex sheets forming cusps. O'Neil \cite{ONeil:point-vortices-vortex-sheets,ONeil:point-vortices-vortex-sheets-pof} also found numerically steady solutions which are combinations of point vortices and vortex sheets.

\subsection{Structure of the proof}

The proof is inspired by our recent rigidity result in the paper \cite{GomezSerrano-Park-Shi-Yao:radial-symmetry-stationary-solutions} on stationary and rotating solutions of the 2D Euler equations both in the smooth and vortex patch settings. To prove it, we constructed an appropriate functional and showed, on one hand, that any stationary solution had to be a critical point, and on the other, for any curve which is not a circle there existed a vector field along which the first variation was non-zero. This vector field is defined in terms of an elliptic equation in the interior of the patch. In the case of the vortex sheet, this is not possible anymore. Instead, we desingularize the problem by considering patches of thickness $\sim\varepsilon$ which are tubular neighborhoods of the sheet. The drawback is that we lose the property that any stationary solution has to be a critical point if $\varepsilon > 0$ and very careful, quantitative estimates need to be done to show that indeed the first variation of a stationary solution tends to 0 as $\varepsilon \to 0$. This setup is also reminiscent of the numerical work by Baker--Shelley \cite{Baker-Shelley:vortex-sheets-via-vortex-patches}, where they approximate the motion of a vortex sheet by a vortex patch of very small width. In \cite{Benedetto-Pulvirenti:vortex-layers-vortex-sheet}, Benedetto--Pulvirenti proved the stability (for short time) of vortex sheet solutions with respect to solutions to 2D Euler with a thin strip of vorticity around a curve. See also the work by Caflisch--Lombardo--Sammartino \cite{Caflisch-Lombardo-Sammartino:vortex-layers-sheets} for more stability results with a different desingularization.

\subsection{Organization of the paper}
In Section \ref{sec2} the equations for the stationary/rotating vortex sheet are derived, and in Section \ref{sec3} we perform the desingularization procedure. Section \ref{subsec_p} is devoted to construct the aforementioned divergence free vector-field along which the first variation is non-zero. Finally in Section \ref{sec5} we conclude the quantitative estimates and prove the symmetry result from Theorem \ref{thmA}.

\subsection{Notations}
For a bounded domain $D\subset \mathbb{R}^2$, we denote $|D|$ by its area (i.e. its Lebesgue measure). For $x\in \mathbb{R}^2$ and $r>0$, denote by $B(x,r)$ or $B_r(x)$ the open ball centered at $x$ with radius $r$.

Through Section 3-5 of this paper, we will desingularize the vortex sheet into a vortex layer with width $\sim\epsilon$, and obtain various quantitative estimates. In all these estimates, we say a term $f$ is $O(g(\epsilon))$ if $|f|\leq Cg(\epsilon)$ for some constant $C$ independent of $\epsilon$.

For a domain $U\subset \mathbb{R}^2$, in the boundary integral $\int_{\partial U}\vec{f}\cdot n d\sigma$, $n$ denotes the outer normal of the domain $U$.

\section{Equations for a stationary/rotating vortex sheet}\label{sec2}
Let $\omega(\cdot,t)=\omega_0(R_{\Omega t})$ be a stationary/rotating vortex sheet solution to the incompressible 2D Euler equation, where $ \omega_0\in \mathcal{M}(\mathbb{R}^2) \cap H^{-1}(\mathbb{R}^2)$ is a Radon measure. Here $\Omega=0$ corresponds to a stationary solution, and $\Omega\neq 0$ corresponds to a rotating solution.  Assume $\omega_0$ is concentrated on $\Gamma$, which is a finite disjoint union of curves. Throughout this paper we assume $\Gamma$ satisfies the following:

\textbf{(H1)} Each connected component of $\Gamma$ is smooth and with finite length, and it is either a simple closed curve (denote them by $\Gamma_1,\dots,\Gamma_n)$, or a non-self-intersecting curve with two endpoints (denote them by $\Gamma_{n+1},\dots, \Gamma_{n+m}$). Here we require $n+m\geq 1$, but allow either $n$ or $m$ to be 0.  

Let us denote 
\begin{equation}
\label{def_d_gamma}
d_{\Gamma}:= \min_{k\neq i} \text{dist}(\Gamma_i, \Gamma_k),
\end{equation}
 which is strictly positive since we assume the curves $\{\Gamma_i\}_{i=1}^{n+m}$ are disjoint.
 For $i=1,\dots, n+m$, denote by $L_i$ the length of $\Gamma_i$. Let $z_i:S_i\to \Gamma_i$ denote a constant-speed parameterization of $\Gamma_i$ (in counter-clockwise direction if $\Gamma_i$ is a closed curve), where the parameter domain $S_i$ is given by
\[
S_i:=\begin{cases}\mathbb{R}/\mathbb{Z}& \text{for $i=1,\dots,n$},\\
[0,1] &\text{for $i=n+1,\dots, n+m$}.
\end{cases}
\] 
Note that this gives $|z_i'|\equiv L_i$, and the arc-chord constant
\begin{equation}\label{def_arc_chord}
F_\Gamma :=  \max_{i=1,\dots,n+m}\sup_{\alpha\neq \beta\in S_i}\frac{|\alpha-\beta|}{|z_i(\alpha)-z_i(\beta)|}
\end{equation}
is finite,
since $\Gamma$ is non-self-intersecting. 
Let $\mathbf{s}:\Gamma\to\mathbb{R}^2$ be the unit tangential vector on $\Gamma$, given by $\mathbf{s}(z_i(\alpha)) := \frac{z_i'(\alpha)}{|z_i'(\alpha)|} = \frac{z_i'(\alpha)}{L_i}$, and $\mathbf{n}:\Gamma\to\mathbb{R}^2$ be the unit normal vector, given by $\mathbf{n} = \mathbf{s}^\perp$.  See Figure~\ref{fig1} for an illustration.

For $i=1,\dots, n+m$, let us denote by $\gamma_i(\alpha)$ the vorticity strength at $z_i(\alpha)$ with respect to the arclength parametrization, which is related to $\varpi_i(\alpha)$ by 
\begin{equation}\label{def_gamma}
\gamma_i(\alpha) = \frac{\varpi_i(\alpha)}{| z_i'(\alpha)|}  \quad\text{ for }\alpha\in S_i.
\end{equation}
Throughout this paper we will be working with $\gamma_i$, instead of $\varpi_i$. 
We impose the following regularity and positivity assumptions on $\gamma_i$:

\textbf{(H2)} Assume that $\gamma_i \in C^2(S_i)$ for $i=1,\dots,n$ and $\gamma_i \in C^{b}(S_i) \cap C^1(S_i^\circ)$ for some $b\in (0,1)$ for \color{black}  $i=n+1,\dots,n+m$.\footnote{For an open curve $i=n+1,\dots,n+m$, note that \textbf{(H2)} does not require $\gamma_i$ to be $C^1$ up to the boundary of $S_i$, and its derivative is allowed to blow up at the endpoints.  This is motivated by the fact that in the explicit uniformly-rotating solution \eqref{example_rotate}, its strength $\gamma$ is H\"older continuous in $[-a,a]$ and smooth in the interior, but its derivative blows up at the endpoints.} 

\textbf{(H3)} For $i=1,\dots,n$, assume $\gamma_i > 0$ in $S_i$. And for $i=n+1,\dots,n+m$, assume $\gamma_i > 0$ in $S_i^\circ$, and $\gamma_i(0)=\gamma_i(1)=0$.

Note that for a closed curve, \textbf{(H3)} implies that $\gamma_i$ is uniformly positive; whereas for an open curve, $\gamma_i$ is positive in the interior of $S_i$ but vanishes at its endpoints. This is because any stationary/rotating vortex sheet with continuous $\gamma_i$ must have it vanishing at the two endpoints of any open curve: if not, one can easily check that $|BR(z_i(\alpha))\cdot\mathbf{n}(z_i(\alpha))|\to\infty$ as $\alpha$ approaches the endpoint, thus such a vortex sheet cannot be stationary in the rotating frame.

With the above notations of $z_i$ and $\gamma_i$, the Birkhoff-Rott integral \eqref{def_BR0} along the sheet can now be expressed as
\begin{equation}\label{def_BR}
BR(z_i(\alpha)) = \sum_{k=1}^{n+m} BR_k(z_i(\alpha)) := \sum_{k=1}^{n+m}PV \int_{S_k}  K_2(z_i(\alpha) - z_k(\alpha'))\, \gamma_k(\alpha') |z_k'(\alpha')| \,d\alpha',
\end{equation}
with the kernel $K_2$ given by 
\begin{equation}\label{def_k2}K_2(x) := (2\pi)^{-1} \nabla^\perp \log|x| = \frac{x^\perp}{2\pi |x|^2},
\end{equation} and the principal value in \eqref{def_BR} is only needed for the integral with $k=i$.

Let $\mathbf{v}:\mathbb{R}^2\to\mathbb{R}^2$ be the velocity field generated by $\omega_0$, given by  $\mathbf{v}:= \nabla^\perp (\omega_0 * \mathcal{N})$. Note that $\mathbf{v} \in C^\infty(\mathbb{R}^2 \setminus \Gamma)$, but $\mathbf{v}$ is discontinuous across $\Gamma$. Let $\mathbf{v}^+,\mathbf{v}^-: \Gamma\to\mathbb{R}^2$ 
denote the two limits of $\mathbf{v}$ on the two sides of $\Gamma$ (with $\mathbf{v}^+$ being the limit on the side that $\mathbf{n}$ points into -- see Figure~\ref{fig1} for an illustration)\color{black}, and $[\mathbf{v}] := \mathbf{v}^- - \mathbf{v}^+$ the jump in $\mathbf{v}$ across the sheets. 
 $[\mathbf{v}]$ is related to the vortex-sheet strength $\gamma$ as follows (see \cite[Eq. (9.8)]{Majda-Bertozzi:vorticity-incompressible-flow} for a derivation): $[\mathbf{v}]\cdot \mathbf{n}=0$, and
\[
[\mathbf{v}] \times \mathbf{n} = [\mathbf{v}] \cdot \mathbf{s} = \gamma.
\]
In addition, the Birkhoff-Rott integral \eqref{def_BR} is the the average of $\mathbf{v}^+$ and $\mathbf{v}^-$, namely
\[
BR(z_i(\alpha)) =\frac{1}{2}(\mathbf{v}^+(z_i(\alpha))+\mathbf{v}^-(z_i(\alpha))) \quad\text{ for all }\alpha\in S_i, i=1,\dots,n+m.
\]

\begin{figure}[h!]
\begin{center}
\includegraphics[scale=1.2]{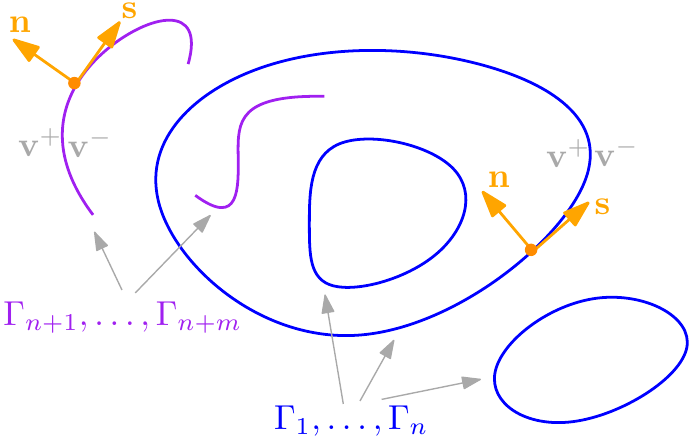}
\caption{Illustration of the closed curves $\Gamma_1,\dots,\Gamma_n$ and the open curves $\Gamma_{n+1},\dots,\Gamma_{n+m}$, and the definitions of $\mathbf{n}$, $\mathbf{s}$, $\mathbf{v}^+$ and $\mathbf{v}^-$.\label{fig1}}
\end{center}
\end{figure}

In the following lemma, we derive the equation that the Birkhoff-Rott integral satisfies for a stationary/rotating vortex sheet.

\begin{lemma}\label{lemma_br_eq}
Assume $\omega(\cdot,t)=\omega_0(R_{\Omega t} x)$ is a stationary/uniformly-rotating vortex sheet with angular velocity $\Omega\in\mathbb{R}$, and $\omega_0$ is concentrated on $\cup_{i=1}^{n+m}\Gamma_i$, with $z_i$ and $\gamma_i$ defined as above. Then the Birkhoff-Rott integral $BR$ \eqref{def_BR} and the strength $\gamma_i$ satisfy the following two equations:
\begin{equation}\label{BR1}
(BR-\Omega x^\perp)\cdot \mathbf{n} =\mathbf{v}^+ \cdot \mathbf{n} = \mathbf{v}^- \cdot \mathbf{n} = 0 \quad\text{ on } \Gamma,
\end{equation}
and
\begin{equation}\label{BR2}
(BR(z_i(\alpha))-\Omega z_i^\perp(\alpha)) \cdot \mathbf{s}(z_i(\alpha)) \,\gamma_i(\alpha)= \begin{cases} C_i  &\text{ on } S_i \text{ for } i = 1,\dots, n, \\0 & \text{ on } S_i \text{ for } i = n+1,\dots, n+m.
\end{cases}
\end{equation}
In particular, the above two equations imply that $BR(z_i(\alpha))-\Omega z_i^\perp(\alpha) \equiv\mathbf{0}$ for $i=n+1,\dots,n+m$.
\end{lemma}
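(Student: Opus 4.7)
The strategy is to specialize the general vortex-sheet evolution \eqref{eqs:vsheet1}--\eqref{eqs:vsheet} to the rigid-rotation ansatz
\[
z_i(\alpha,t) := R_{\Omega t}\, z_i(\alpha,0), \qquad \varpi_i(\alpha,t) := \varpi_i(\alpha,0),
\]
which is consistent with the hypothesis $\omega(\cdot,t)=\omega_0(R_{\Omega t}\,\cdot)$, and to read off \eqref{BR1}--\eqref{BR2} as the algebraic constraints that the evolution equations then impose on $BR$ and $\gamma_i$. Because the kernel $K_2$ in \eqref{def_k2} satisfies $K_2(Rx)=R\,K_2(x)$ for every rotation $R$, the Birkhoff--Rott integral transforms covariantly under the rigid rotation, so it is enough to prove the identities at $t=0$ and drop the $t$ from the notation. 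Under this parametrization one has $\partial_t z_i|_{t=0}=\Omega\, z_i^{\perp}(\alpha)$ and $\partial_t \varpi_i\equiv 0$.

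Plugging into \eqref{eqs:vsheet1} at $t=0$ yields
\[
BR(z_i(\alpha)) - \Omega\, z_i^{\perp}(\alpha) \;=\; -\,c_i(\alpha)\,\partial_\alpha z_i(\alpha) \;=\; -\,c_i(\alpha)\,L_i\,\mathbf{s}(z_i(\alpha)),
\]
so the vector field $BR-\Omega z^{\perp}$ is purely tangential along $\Gamma$. Taking its normal component yields the first identity in \eqref{BR1}; the remaining equalities with $\mathbf{v}^{\pm}\!\cdot\!\mathbf{n}$ come from the jump relations $[\mathbf{v}]\cdot\mathbf{n}=0$ and $BR=\tfrac{1}{2}(\mathbf{v}^{+}+\mathbf{v}^{-})$ recalled right before the lemma statement.

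For \eqref{BR2}, equation \eqref{eqs:vsheet} with $\partial_t\varpi_i=0$ reduces to $\partial_\alpha(c_i\varpi_i)=0$, so $c_i(\alpha)\varpi_i(\alpha)$ is constant on each $S_i$. Taking the tangential component of the displayed identity and multiplying by $\gamma_i(\alpha)=\varpi_i(\alpha)/L_i$ gives
\[
\big(BR(z_i(\alpha))-\Omega z_i^{\perp}(\alpha)\big)\cdot \mathbf{s}(z_i(\alpha))\,\gamma_i(\alpha) \;=\; -\,c_i(\alpha)\,\varpi_i(\alpha) \;\equiv\; C_i,
\]
which is exactly \eqref{BR2} on closed curves. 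On an open curve, hypothesis (H3) forces $\varpi_i(0)=\varpi_i(1)=0$, so the constant $C_i$ vanishes, yielding the second case of \eqref{BR2}. Since $\gamma_i>0$ in $S_i^{\circ}$ by (H3), dividing by $\gamma_i$ gives $(BR-\Omega z^{\perp})\cdot\mathbf{s}=0$ in $S_i^{\circ}$, and combined with \eqref{BR1} this forces $BR-\Omega z_i^{\perp}\equiv \mathbf{0}$ on $S_i^{\circ}$, extending to all of $S_i$ by continuity of $BR$ along the sheet (which follows from (H1)--(H2) and the arc-chord bound \eqref{def_arc_chord}).

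The argument is essentially algebraic once the rigid-rotation parametrization is fixed, so no serious analytic difficulty is expected. The only subtlety is in the choice of parametrization: rather than trying to exploit the tangential reparametrization freedom encoded by $c_i$ cleverly, one pins $c_i$ down through the rotation ansatz and instead reads off the geometric constraints on $BR$. The endpoint argument for open curves relies crucially on the vanishing $\gamma_i(0)=\gamma_i(1)=0$ from (H3), which (as noted in the paragraph preceding the lemma) is itself forced on any candidate stationary/rotating configuration by the blow-up of $BR\cdot\mathbf{n}$ at an endpoint where $\gamma_i$ does not vanish.
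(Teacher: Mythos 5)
Your proof is correct and follows essentially the same route as the paper's: both fix the parametrization so that the sheet is frozen (you in the lab frame via the rigid-rotation ansatz, the paper in the rotating frame via the explicit choice of $c_i$), then read off \eqref{BR1} from the normal component of the evolution equation and \eqref{BR2} from $\partial_\alpha(c_i\varpi_i)=0$. The only (minor) divergence is how $C_i=0$ is obtained for open curves: you let $\alpha$ tend to an endpoint and use $\gamma_i(0)=\gamma_i(1)=0$ from \textbf{(H3)} together with the (implicitly assumed, but true under \textbf{(H2)}--\textbf{(H3)}) boundedness of $BR$ there, whereas the paper argues that the endpoint velocity itself must vanish because there is no reparametrization freedom at an endpoint; both arguments are valid.
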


\begin{proof}
By definition of the stationary/uniformly-rotating solutions,  $\omega_0$ is a stationary vortex sheet in the rotating frame with angular velocity $\Omega$. In this rotating frame, an extra velocity $-\Omega z_i^\perp$ should be added to the right hand side of \eqref{eqs:vsheet1}. Therefore the evolution equations \eqref{eqs:vsheet1}--\eqref{eqs:vsheet} become the following in the rotating frame (where we also use \eqref{def_BR}):
\begin{align}
\partial_t z_i(\al,t) & = BR(z_i(\al,t))  - \Omega z_i^\perp(\alpha,t)  + c_i(\al,t) \pa_\al z_i(\al,t) \label{eqs:vsheet10} \\
\partial_t \varpi_i(\al,t) & =\pa_\al(c_i(\al,t) \varpi_i(\al,t)), \label{eqs:vsheet0}
\end{align}
where the term $c_i(\al,t)$ accounts for the reparametrization freedom of the curves.
Since $\omega_0$ is stationary  in the rotating frame, $z_i(\cdot,t)$ parametrizes the same curve as $z_i(\cdot,0)$. Therefore $\partial_t z_i(\al,t)$ is tangent to the curve $\Gamma_i$, and multiplying $\mathbf{n}(z_i(\alpha,t))$ to \eqref{eqs:vsheet10} gives
\begin{equation}\label{temp_dot_n}
0=\partial_t z_i(\al,t) \cdot \mathbf{n}(z_i(\alpha,t)) = (BR(z_i(\alpha,t))-\Omega z_i^\perp(\alpha,t))\cdot \mathbf{n}(z_i(\alpha,t)),
\end{equation}
where we use that $\mathbf{n}(z_i(\alpha,t)) \cdot \partial_\alpha z_i(\alpha,t)=0$. This proves \eqref{BR1}.

Now we prove \eqref{BR2}.  
Towards this end, let us choose 
\[
c_i(\alpha,t) := -\frac{(BR(z_i(\al,t))-\Omega z_i^\perp(\alpha,t))\cdot \mathbf{s}(z_i(\al,t))}{|\pa_\al z_i(\al,t)|},
\] so that  multiplying $\mathbf{s}(z_i(\alpha,t))$ to \eqref{eqs:vsheet10} gives $\partial_t z_i(\al,t) \cdot \mathbf{s}(z_i(\alpha,t)) =0$, and combining it with \eqref{temp_dot_n} gives
$
\partial_t z_i(\alpha,t) =0.
$
In other words, with such choice of $c_i$, the parametrization $z_i(\alpha,t)$ remains fixed in time. Since $\omega_0$ is stationary in the rotating frame, we know that with a fixed parametrization $z_i(\alpha,t)=z_i(\alpha,0)$, the strength $\varpi_i(\alpha,t)$ must also remain invariant in time. Thus \eqref{eqs:vsheet0} becomes
\[
c_i(\alpha,t) \varpi_i(\alpha,t) \equiv C_i.
\]
Plugging the definition of $c_i$ into  the equation  above and using the fact that $z_i$ is invariant in $t$, we have
\[
\frac{(BR(z_i(\al))-\Omega z_i^\perp(\alpha))\cdot \mathbf{s}(z_i(\al)) \varpi_i(\alpha)}{|\pa_\al z_i(\al)|  }\equiv -C_i\quad\text{ for all }\alpha\in S_i,
\]
and finally the relationship between $\gamma_i$ and $\varpi_i$ in \eqref{def_gamma} yields \eqref{BR2} for $i=1,\dots, n$. 

And for the open curves $i=n+1,\dots,n+m$, note that we do not have any reparametrization freedom at the two endpoints $\alpha=0,1$, therefore the  endpoint  velocity $BR(z_i(0,t))-\Omega z_i^\perp(0,t)$ must be 0 to ensure that $\omega_0$ is stationary in the rotating frame. This immediately leads to $C_i=0$ for $i=n+1,\dots,n+m$, finishing the proof of \eqref{BR2}.
\end{proof}

\section{Approximation by a thin vortex layer}\label{sec3}
Our aim in this section is to desingularize the vortex sheet $\omega_0$. Namely, for $0<\epsilon\ll 1$, we will construct a vorticity $\omega^\epsilon \in L^\infty(\mathbb{R}^2) \cap L^1(\mathbb{R}^2)$ that only takes values $0$ and $\epsilon^{-1}$, and is supported in an $O(\epsilon)$ neighborhood of $\Gamma$, such that $\omega^\epsilon$ weakly converges to $\omega_0$ as $\epsilon\to 0^+$.

For each $i=1,\dots,n+m$, we will describe a neighborhood of $\Gamma_i$ using the following change of coordinates: let $R^\epsilon_i: S_i\times \mathbb{R} \to \mathbb{R}^2$ be given by
\begin{equation}\label{def_r_eps}
R^\epsilon_i(\alpha, \eta) := z_i(\alpha) + \epsilon \gamma_i(\alpha)  \mathbf{n}(z_i(\alpha)) \eta,
\end{equation}
and let
\[
D^\epsilon_i := \left\{ R^\epsilon_i(\alpha, \eta) : \alpha\in S_i^\circ, \eta\in (-1, 0)\right\}.
\]
Note that each $D^\epsilon_i$ is a connected open set, and for all $\epsilon>0$ sufficiently small, the sets $(D^\epsilon_i)_{i=1}^{n+m}$ are disjoint. For $i=1,\dots,n$, the domains $D_i^\epsilon$ are doubly-connected with smooth boundary, and its inner boundary coincides with $\Gamma_i$; see the left of Figure~\ref{fig_R} for an illustration. And for $i=n+1,\dots,n+m$, the domains $D_i^\epsilon$ are simply-connected, and its boundary is smooth except at at most two points; see the right of Figure~\ref{fig_R} for an illustration. 

\begin{figure}[h!]
\begin{center}
\includegraphics[scale=1]{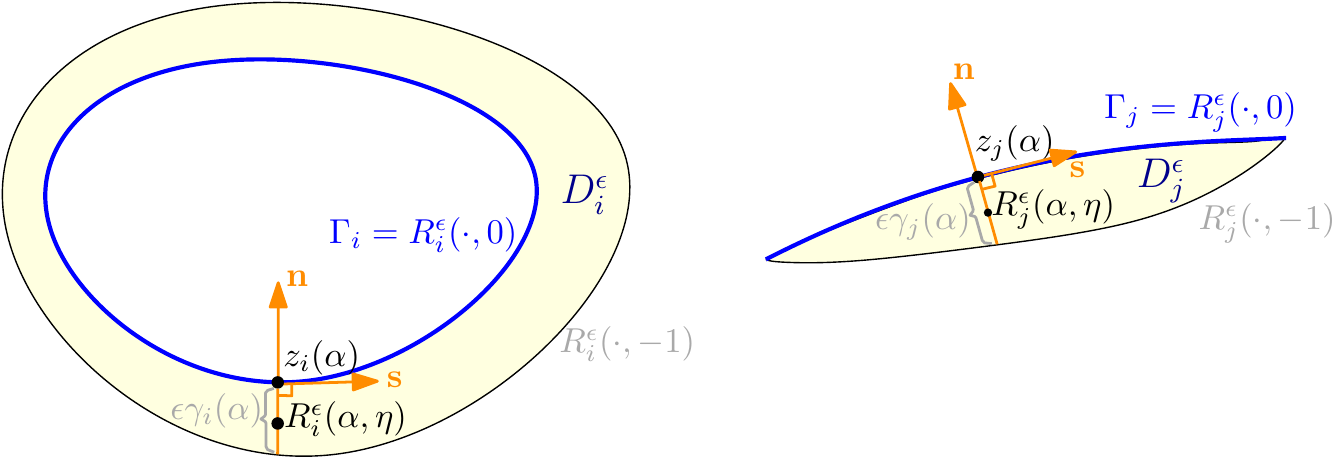}
\caption{Illustration of the definitions of $R_i^\epsilon$ and $D_i^\epsilon$ for a closed curve (left) and an open curve (right).\label{fig_R}}
\end{center}
\end{figure}

In addition, for $\epsilon>0$ that is sufficiently small, one can check that $R^\epsilon_i: S_i^\circ \times (-1,0) \to D_i^\epsilon$ is a diffeomorphism. Since $\gamma_i \in C^1(S_i)$ and $z_i\in C^2(S_i)$, we only need to show $R^\epsilon_i: S_i^\circ \times (-1,0) \to D_i^\epsilon$ is injective. Below we prove this fact in a stronger quantitative version, which will be used later.

\begin{lemma}\label{lemma_injection}
For any $i=1,\dots, n+m$, assume $\Gamma_i$ and $\gamma_i$ satisfy \textbf{\textup{(H1)}}--\textbf{\textup{(H2)}}. Then the map $R^\epsilon_i: S_i^\circ \times (-1,0) \to D^\epsilon_i$ given by \eqref{def_r_eps} is injective. In addition,  there exist some $c_0,\epsilon_0>0$ depending on $\|z_i\|_{C^2(S_i)},$ $\rVert \gamma_i\rVert_{L^\infty(S_i)}$ and $F_\Gamma$, such that for all $\epsilon\in(0,\epsilon_0)$ we have
\begin{equation}\label{r_diff}
|R^\epsilon_i(\alpha',\eta')-R^\epsilon_i(\alpha,\eta) |\geq c_0\big(|\alpha'-\alpha| + \epsilon |\gamma_i(\alpha)\eta - \gamma_i(\alpha')\eta'|\big),
\end{equation}
for all $\alpha,\alpha'\in S_i^\circ, ~\eta,\eta' \in (-1,0)$.\footnote{In fact, \eqref{r_diff} also holds (with a slightly smaller $\epsilon_0$ and $c_0$) for $\eta,\eta' \in (-2,2)$, even though such $R_i^\epsilon$ may not belong to $D_i^\epsilon$. We will use this fact later in the proof of Lemma~\ref{lemma_v_jump}.}
\end{lemma}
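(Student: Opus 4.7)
The goal is to establish the quantitative lower bound \eqref{r_diff}; once this is in hand, injectivity on $S_i^\circ\times(-1,0)$ is immediate, since $R^\epsilon_i(\alpha',\eta')=R^\epsilon_i(\alpha,\eta)$ forces $\alpha=\alpha'$ and, using $\gamma_i>0$ on $S_i^\circ$ from \textbf{(H3)}, also $\eta=\eta'$. Write $A:=R^\epsilon_i(\alpha',\eta')-R^\epsilon_i(\alpha,\eta)$ and abbreviate $\mathbf{n}_i(\alpha):=\mathbf{n}(z_i(\alpha))$, $\mathbf{s}_i(\alpha):=\mathbf{s}(z_i(\alpha))$. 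The plan is to dichotomize between a ``far'' regime $|\alpha'-\alpha|\geq M\epsilon$ and a ``near'' regime $|\alpha'-\alpha|<M\epsilon$, where $M>0$ depends on $F_\Gamma$ and $\|\gamma_i\|_{L^\infty(S_i)}$ and will be fixed in the first step.

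In the far regime, the arc-chord bound \eqref{def_arc_chord} gives $|z_i(\alpha')-z_i(\alpha)|\geq F_\Gamma^{-1}|\alpha'-\alpha|$, while the perturbation $\epsilon[\gamma_i(\alpha')\eta'\mathbf{n}_i(\alpha')-\gamma_i(\alpha)\eta\mathbf{n}_i(\alpha)]$ has modulus at most $2\epsilon\|\gamma_i\|_{L^\infty}$. Taking $M\geq 4F_\Gamma\|\gamma_i\|_{L^\infty}$ forces this perturbation to be at most half the leading term, hence $|A|\geq (2F_\Gamma)^{-1}|\alpha'-\alpha|$. Since simultaneously $\epsilon|\gamma_i(\alpha)\eta-\gamma_i(\alpha')\eta'|\leq 2\epsilon\|\gamma_i\|_{L^\infty}\leq (2F_\Gamma)^{-1}|\alpha'-\alpha|$, the right-hand side of \eqref{r_diff} is controlled by a constant multiple of $|\alpha'-\alpha|$, and \eqref{r_diff} follows.

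In the near regime, I project $A$ onto $\mathbf{s}_i(\alpha)$ and $\mathbf{n}_i(\alpha)$. Using the constant-speed parametrization (whence $z_i''(\alpha)\parallel \mathbf{n}_i(\alpha)$, so $(z_i(\alpha')-z_i(\alpha))\cdot\mathbf{s}_i(\alpha)=L_i(\alpha'-\alpha)+o(|\alpha'-\alpha|^2)$) together with the identity $(\mathbf{n}_i(\alpha')-\mathbf{n}_i(\alpha))\cdot \mathbf{n}_i(\alpha)=O(|\alpha'-\alpha|^2)$, a Taylor expansion relying only on $z_i\in C^2$ and boundedness of $\gamma_i$ yields
\[
A\cdot \mathbf{s}_i(\alpha) = L_i(\alpha'-\alpha) + O\bigl(|\alpha'-\alpha|(|\alpha'-\alpha|+\epsilon)\bigr),\qquad A\cdot \mathbf{n}_i(\alpha) = \epsilon[\gamma_i(\alpha')\eta' - \gamma_i(\alpha)\eta] + O(|\alpha'-\alpha|^2),
\]
with constants depending on $\|z_i\|_{C^2(S_i)}$ and $\|\gamma_i\|_{L^\infty(S_i)}$. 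Because $|\alpha'-\alpha|<M\epsilon$, both error terms are bounded by $CM\epsilon|\alpha'-\alpha|$, which is absorbable by shrinking $\epsilon_0$. This gives $|A\cdot \mathbf{s}_i(\alpha)|\geq \tfrac{L_i}{2}|\alpha'-\alpha|$ and $|A\cdot \mathbf{n}_i(\alpha)|\geq \epsilon|\gamma_i(\alpha')\eta'-\gamma_i(\alpha)\eta|-CM\epsilon|\alpha'-\alpha|$, and an appropriate linear combination of these two inequalities produces \eqref{r_diff} with some $c_0>0$. The footnote extension to $\eta,\eta'\in(-2,2)$ goes through verbatim after replacing $\|\gamma_i\eta\|$-type bounds by $2\|\gamma_i\|_{L^\infty}$, which only rescales constants.

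The principal obstacle I anticipate is the bookkeeping in the near regime: the tangential estimate picks up an $O(\epsilon|\alpha'-\alpha|)$ error from the normal part of $\partial_\alpha R^\epsilon_i$, and the normal estimate picks up an $O(|\alpha'-\alpha|^2)$ error from the curvature of $\Gamma_i$, both of which must be absorbed by choosing $\epsilon_0$ small enough depending on $M=M(F_\Gamma,\|\gamma_i\|_{L^\infty})$. Since no derivative of $\gamma_i$ enters the argument, the weakened regularity of $\gamma_i$ at the endpoints of open curves permitted by \textbf{(H2)} causes no issue.
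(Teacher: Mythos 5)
Your proof is correct and follows essentially the same strategy as the paper's: a dichotomy in $|\alpha-\alpha'|$, with the arc-chord constant $F_\Gamma$ plus a crude triangle inequality handling the far regime, and a tangential/normal decomposition exploiting $z_i'(\alpha)\perp\mathbf{n}(z_i(\alpha))$ (from the constant-speed parametrization) handling the near regime. The only differences are cosmetic: you place the threshold at $M\epsilon$ rather than at the fixed scale $L_i/(8\|z_i\|_{C^2})$ used in the paper, and you project onto the orthonormal frame at $\alpha$ rather than expanding about $\alpha'$, but the errors being absorbed ($O(|\alpha-\alpha'|^2)$ and $O(\epsilon|\alpha-\alpha'|)$) and the resulting constants are the same.
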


\begin{proof} To begin with, note that \eqref{r_diff} immediately implies that $R^\epsilon_i:S_i^\circ \times (-1,0) \to D^\epsilon_i$ is injective, where we used the positivity assumption $\gamma_i>0$ in $S_i^\circ$ in \textbf{(H2)}. Thus it suffices to prove \eqref{r_diff}. Throughout the proof, we fix any $i\in\{1,\dots, n+m\}$, and we will omit the subscript $i$ for notational simplicity. Using the definition \eqref{def_r_eps}, let us break $R^\epsilon(\alpha',\eta') - R^\epsilon(\alpha,\eta)$ into 
 \begin{equation}\label{temp001}
 R^\epsilon(\alpha',\eta') - R^\epsilon(\alpha,\eta) = \underbrace{z(\alpha') - z(\alpha)}_{=:T_1} +\underbrace{\epsilon\left( \gamma(\alpha')\eta' - \gamma(\alpha)\eta \right)\mathbf{n}(z(\alpha'))}_{=:T_2} + \underbrace{\epsilon\gamma(\alpha)\eta \left( \mathbf{n}(z(\alpha'))-\mathbf{n}(z(\alpha)) \right)}_{=:T_3}.
 \end{equation}
 For $T_1$ and $T_3$,  we have 
 \begin{equation}\label{T1T2}
 \begin{split}
 \left| T_1 - z'(\alpha')(\alpha'-\alpha)\right| &\le \rVert z \rVert_{C^2(S)}|\alpha-\alpha'|^2, \\
 \left| T_3 \right| &\le \epsilon\gamma(\alpha)\rVert z \rVert_{C^2(S)}|\alpha-\alpha'|.
 \end{split}
 \end{equation}
 Also, using that $z'(\alpha')=L \mathbf{s}(z(\alpha'))$ is perpendicular to $\mathbf{n}(z(\alpha'))$, we have
  \[
  \begin{split}
  |z'(\alpha')(\alpha'-\alpha) + T_2| &= \left| L(\alpha'-\alpha) \mathbf{s}(z(\alpha')) + \epsilon\left( \gamma(\alpha')\eta' - \gamma(\alpha)\eta \right)\mathbf{n}(z(\alpha'))\right| \\
  &\geq \frac{1}{2}L|\alpha'-\alpha| + \frac{1}{2}\epsilon\left| \gamma(\alpha')\eta' - \gamma(\alpha)\eta \right|,
  \end{split}
  \]
  where we use that $\sqrt{x^2+y^2}\geq \frac{1}{2}(|x|+|y|)$. Combining this with \eqref{T1T2} gives
 \begin{align*}
 | T_1 + T_2  + T_3| 
& \geq |\alpha - \alpha'| \left( \frac{L}{2} - \rVert z\rVert_{C^2(S)}\left( |\alpha-\alpha'| + \epsilon\gamma(\alpha) \right) \right) +\frac{1}{2} \epsilon | \gamma(\alpha')\eta' - \gamma(\alpha)\eta |,
 \end{align*}
 thus
 \begin{equation}\label{bound1_smallalpha}
 |R^\epsilon(\alpha',\eta') - R^\epsilon(\alpha,\eta)| \ge \frac{L}{4}|\alpha-\alpha'| + \frac{1}{2}\epsilon | \gamma(\alpha')\eta' - \gamma(\alpha)\eta |
 \end{equation}
 for all $0<\epsilon < L(8\rVert z \rVert_{C^2}\rVert\gamma\rVert_{L^\infty})^{-1}$ and $|\alpha-\alpha'| \le \frac{L}{8\rVert z \rVert_{C^2}}$. 
  
  For $|\alpha-\alpha'| > \frac{L}{8\rVert z \rVert_{C^2}}$, recall that the definition of $F_\Gamma$ in \eqref{def_arc_chord} gives $ |z(\alpha')-z(\alpha)| \geq F_\Gamma^{-1}|\alpha'-\alpha|$. Thus a crude estimate gives
  \begin{equation}\label{case2}
  |R^\epsilon(\alpha',\eta') - R^\epsilon(\alpha,\eta)|\geq |z(\alpha')-z(\alpha)| -2\epsilon \|\gamma\|_{L^\infty(S)} \geq \frac{1}{2F_\Gamma} |\alpha'-\alpha| + \epsilon | \gamma(\alpha')\eta' - \gamma(\alpha)\eta |
  \end{equation}
 for $0<\epsilon<L(64F_\Gamma\|z\|_{C^2}\|\gamma\|_{L^\infty})^{-1}$. (Note that for such $\epsilon$ we have $4\epsilon\|\gamma\|_{L^\infty} \leq \frac{1}{2F_\Gamma}|\alpha'-\alpha|$ due to our assumption that $|\alpha-\alpha'| > \frac{L}{8\rVert z \rVert_{C^2}}$).
 
 Finally, combining \eqref{bound1_smallalpha} and \eqref{case2}, it follows that \eqref{r_diff} holds for $c_0 = \min\{ \frac{L}{4}, \frac{1}{2F_\Gamma}, \frac{1}{2}\}$ and $\epsilon_0 = \min\{ L(8\rVert z \rVert_{C^2}\rVert\gamma\rVert_{L^\infty})^{-1}, L(64F_\Gamma\|z\|_{C^2}\|\gamma\|_{L^\infty})^{-1}\}$. This finishes the proof. 
 \color{black}
\end{proof}

In the next lemma we compute the partial derivatives and Jacobian of $R^\epsilon_i(\alpha,\eta)$, which will be useful later.

\begin{lemma}\label{P_derivatives1}
For any $i=1,\dots,n+m$, let $z_i$ be a constant-speed parameterization of the curve $\Gamma_i$ (with length $L_i$), and let $R^\epsilon_i$ be given by \eqref{def_r_eps}. Then its partial derivatives are
\begin{equation}\label{eq_partial}
\begin{split}
\partial_\alpha  R^\epsilon_i(\alpha,\eta) &= z_i'(\alpha) + \epsilon \left(\gamma_i'(\alpha) \frac{z_i'(\alpha)^\perp}{L_i} \eta + \gamma_i(\alpha) \frac{z_i''(\alpha)^\perp}{L_i} \eta \right),\\
\partial_\eta  R^\epsilon_i(\alpha,\eta) &=  \epsilon \gamma_i(\alpha)  \frac{z_i'(\alpha)^\perp}{L_i}.
\end{split}
\end{equation}
Moreover, its Jacobian is given by
\begin{equation}\label{eq_det}
\begin{split}
\det (\nabla_{\alpha,\eta} R^\epsilon_i) & = \epsilon L_i \gamma_i(\alpha) - \epsilon^2 L_i \gamma_i^2(\alpha) \kappa_i(\alpha)\eta,
\end{split}
\end{equation}
where $\kappa_i(\alpha)$ denotes the signed curvature of $\Gamma_i$ at $z_i(\alpha)$.
\end{lemma}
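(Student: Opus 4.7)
The statement is a direct computation from the definition \eqref{def_r_eps}, so I will not expect a serious obstacle, but rather lay out the bookkeeping carefully.

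\textbf{Step 1: Partial derivatives.} Starting from
\[
R_i^\epsilon(\alpha,\eta) = z_i(\alpha) + \epsilon\gamma_i(\alpha)\,\mathbf{n}(z_i(\alpha))\,\eta,
\]
the $\eta$-derivative is immediate: $\partial_\eta R_i^\epsilon = \epsilon\gamma_i(\alpha)\mathbf{n}(z_i(\alpha))$. Since $z_i$ is constant-speed with $|z_i'|\equiv L_i$, we have $\mathbf{s}(z_i(\alpha))=z_i'(\alpha)/L_i$ and $\mathbf{n}(z_i(\alpha))=z_i'(\alpha)^\perp/L_i$, yielding the second line of \eqref{eq_partial}. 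For the $\alpha$-derivative, I apply the product rule and use the chain rule identity $\frac{d}{d\alpha}\mathbf{n}(z_i(\alpha))=\frac{d}{d\alpha}\bigl(z_i'(\alpha)^\perp/L_i\bigr)=z_i''(\alpha)^\perp/L_i$, which produces the first line of \eqref{eq_partial}.

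\textbf{Step 2: Jacobian via bilinear expansion.} The Jacobian equals the scalar cross product $\partial_\alpha R_i^\epsilon \times \partial_\eta R_i^\epsilon$. Expanding $\partial_\alpha R_i^\epsilon$ as three terms and pairing each against $\partial_\eta R_i^\epsilon = \epsilon\gamma_i(\alpha)z_i'(\alpha)^\perp/L_i$, I get three contributions. The middle term $\epsilon\gamma_i'(\alpha)\eta\,z_i'(\alpha)^\perp/L_i$ is parallel to $\partial_\eta R_i^\epsilon$, so its cross product with $\partial_\eta R_i^\epsilon$ vanishes. The term $z_i'(\alpha)$ crossed with $\partial_\eta R_i^\epsilon$ gives
\[
z_i'(\alpha)\times\Bigl(\tfrac{\epsilon\gamma_i(\alpha)}{L_i}z_i'(\alpha)^\perp\Bigr) = \tfrac{\epsilon\gamma_i(\alpha)}{L_i}|z_i'(\alpha)|^2 = \epsilon L_i\gamma_i(\alpha),
\]
using $v\times v^\perp = |v|^2$.

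\textbf{Step 3: Curvature term.} The remaining contribution is
\[
\Bigl(\tfrac{\epsilon\gamma_i(\alpha)\eta}{L_i}z_i''(\alpha)^\perp\Bigr)\times\Bigl(\tfrac{\epsilon\gamma_i(\alpha)}{L_i}z_i'(\alpha)^\perp\Bigr) = \tfrac{\epsilon^2\gamma_i^2(\alpha)\eta}{L_i^2}\bigl(z_i''(\alpha)^\perp\times z_i'(\alpha)^\perp\bigr).
\]
Using the identity $a^\perp\times b^\perp = a\times b$, this becomes $\tfrac{\epsilon^2\gamma_i^2(\alpha)\eta}{L_i^2}(z_i''\times z_i') = -\tfrac{\epsilon^2\gamma_i^2(\alpha)\eta}{L_i^2}(z_i'\times z_i'')$. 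For a constant-speed parametrization the signed curvature satisfies $\kappa_i(\alpha) = (z_i'\times z_i'')/L_i^3$, so this contribution equals $-\epsilon^2 L_i\gamma_i^2(\alpha)\kappa_i(\alpha)\eta$. Summing the three contributions gives \eqref{eq_det}.

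The only place one has to be careful is the sign convention for $\mathbf{n}=\mathbf{s}^\perp$ and for the signed curvature; both are already fixed consistently in Section~\ref{sec2}, so the formula comes out as stated without any ambiguity.
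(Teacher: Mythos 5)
Your proposal is correct and follows essentially the same route as the paper: differentiate \eqref{def_r_eps} directly using $\mathbf{n}(z_i(\alpha))=z_i'(\alpha)^\perp/L_i$, then evaluate the determinant term by term, with the $\gamma_i'$ contribution dropping out by parallelism and the curvature identity $z_i''=\kappa_i\mathbf{n}(z_i)L_i^2$ (equivalently your $\kappa_i=(z_i'\times z_i'')/L_i^3$) producing the $\epsilon^2$ term. Writing the determinant as the scalar cross product of the two columns is only a cosmetic difference from the paper's matrix computation.
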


\begin{proof}
Since $z_i$ is the constant-speed parameterization of  $\Gamma_i$ (which has length $L_i$), we have $|z_i'|\equiv L_i$ and $\mathbf{n}(z_i(\alpha))=z_i'(\alpha)^\perp / L_i$. Taking the $\alpha$ and $\eta$ partial derivatives of \eqref{def_r_eps} directly yields \eqref{eq_partial}.

Putting the two partial derivatives into columns of a $2\times 2$ matrix and  computing the determinant, we have
\[
\begin{split}
\det (\nabla_{\alpha,\eta} R_i^\epsilon) &= \epsilon \gamma_i(\alpha) \frac{|z_i'(\alpha)|^2}{L_i} + \epsilon^2 \gamma_i^2(\alpha) \frac{z_i''(\alpha)^\perp \cdot z_i'(\alpha)}{L_i^2}\eta\\
& = \epsilon L_i \gamma_i(\alpha) - \epsilon^2 L_i \gamma_i^2(\alpha) \kappa_i(\alpha)\eta,
\end{split}
\]
where in the second equality we used that $z_i''(\alpha)=\kappa_i(\alpha)  \mathbf{n}(z_i(\alpha)) L_i^2$ (recall that $z_i$ has constant speed $L_i$). This finishes the proof.
\end{proof}
\begin{remark}\label{rmk_area}
We point out that for each $i=1,\dots,n+m$, the determinant formula \eqref{eq_det} immediately gives the following approximation of $|D^\epsilon_i|$, which will be helpful in the proofs later:
\begin{align}\label{P_nonzero}
\frac{|D^\epsilon_i|}{\epsilon} =\frac{1}{\epsilon} \int_{D^\epsilon_i}1dx =\frac{1}{\epsilon}\int_{S_i}\int_{-1}^{0}\det(\nabla_{\alpha,\eta} R^\epsilon_i(\alpha,\eta))\, d\eta d\alpha=  L_i\int_{S_i}\gamma_i(\alpha)d\alpha + O(\epsilon),
\end{align}
where the $O(\epsilon)$ error term has its absolute value bounded by $C\epsilon$, with $C$ only depending on $\|z_i\|_{C^2(S_i)}$ and $\|\gamma_i\|_{L^\infty(S_i)}$.
\end{remark}

Finally, let $D^\epsilon := \cup_{i=1}^{n+m} D_i^\epsilon$, and $\omega^\epsilon:\mathbb{R}^2\to \mathbb{R}$ is defined as 
\[
\omega^\epsilon(x) := \epsilon^{-1} 1_{D^\epsilon}(x) =\epsilon^{-1} \sum_{i=1}^{n+m} 1_{D_i^\epsilon}(x),
\]
and let 
\begin{equation}\label{def_v_eps}
\mathbf{v}^\epsilon = \nabla^\perp (\omega^\epsilon * \mathcal{N})
\end{equation} be the velocity field generated by $\omega^\epsilon$.

In the next lemma we aim to obtain some fine estimate of $\mathbf{v}^\epsilon$ in the thin vortex layer $D^\epsilon$. Our goal is to show that along each cross section of the thin layer (i.e. fix $i$ and $\alpha$, and let $\eta$ vary in $[-1,0]$), the function $\eta \mapsto \mathbf{v}^\epsilon(R_i^\epsilon(\alpha,\eta))$ is almost a linear function  in $\eta$, with the endpoint values (at $\eta=-1$ and $0$) being almost $\mathbf{v}^-(z_i(\alpha))$ and $\mathbf{v}^+(z_i(\alpha))$ respectively.

\begin{lemma}\label{lemma_v}
For $i=1,\dots,n+m$, assume $\Gamma_i$ and $\gamma_i$ satisfy \textbf{\textup{(H1)}}--\textbf{\textup{(H3)}}. Let
\[
g_i(\alpha, \eta) := BR(z_i(\alpha)) -  \Big(\eta+\frac{1}{2}\Big) [\mathbf{v}](z_i(\alpha)) \quad\text{ for }\alpha\in S_i,
\]
and note that $g_i(\alpha,0) = \mathbf{v}^{+}(z_i(\alpha))$ and $g_i(\alpha,-1) = \mathbf{v}^{-}(z_i(\alpha))$ (see Figure~\ref{fig_velocity} for an illustration of $g_i(\alpha,\eta)$).
Then for all sufficiently small $\epsilon>0$, for all $i = 1,\dots, n+m$ we have
\begin{equation}\label{diff_v}
|\mathbf{v}^\epsilon(R_i^\epsilon(\alpha,\eta)) - g_i(\alpha,\eta)| \leq C\epsilon^b |\log\epsilon| \quad\text{ for all }  \alpha\in S_i, \eta\in[-1,0],
\end{equation}
where $b\in(0,1)$ is as in \textup{\textbf{(H2)}}, and $C$ depends on $b$, $\max_i\|z_i\|_{C^2(S_i)}$, $\max_{i}\|\gamma_i\|_{C^b(S_i)}$,  $d_\Gamma$ and $F_\Gamma$.
\end{lemma}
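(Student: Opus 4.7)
The plan is to write
\[
\mathbf{v}^\epsilon(R_i^\epsilon(\alpha,\eta)) = I_i(\alpha,\eta) + \sum_{j \neq i} I_j(\alpha,\eta), \qquad I_j := \epsilon^{-1}\int_{D_j^\epsilon} K_2(R_i^\epsilon(\alpha,\eta) - y)\,dy,
\]
and treat the self-contribution $I_i$ and the cross-contributions $I_j$ ($j\neq i$) separately. For $j\neq i$, I would change variables $y = R_j^\epsilon(\beta,\zeta)$ and use the Jacobian formula \eqref{eq_det}. Since $\mathrm{dist}(D_i^\epsilon, D_j^\epsilon) \geq d_\Gamma/2$ for $\epsilon$ small, the kernel is uniformly smooth and Lipschitz here; replacing $R_i^\epsilon(\alpha,\eta)$ and $R_j^\epsilon(\beta,\zeta)$ by $z_i(\alpha)$ and $z_j(\beta)$ (and the Jacobian by its leading term $\epsilon L_j\gamma_j(\beta)$) introduces only $O(\epsilon)$ error, yielding $I_j = BR_j(z_i(\alpha)) + O(\epsilon)$.

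For the self-contribution $I_i$, the change of variable $y = R_i^\epsilon(\beta,\zeta)$ turns the integral into one over $S_i\times[-1,0]$; I would split the $\beta$-integral at $|\beta-\alpha|=\delta$ for a parameter $\delta=\delta(\epsilon)$ to be optimized. In the far zone $|\beta-\alpha|>\delta$, Lemma~\ref{lemma_injection} gives $|R_i^\epsilon(\alpha,\eta)-R_i^\epsilon(\beta,\zeta)|\ge c_0|\beta-\alpha|$, so comparing $K_2$ at $R^\epsilon$-arguments with $K_2(z_i(\alpha)-z_i(\beta))$ costs at most $C\epsilon/|\beta-\alpha|^2$, integrating to $O(\epsilon/\delta)$; the remaining leading piece $\int_{|\beta-\alpha|>\delta}K_2(z_i(\alpha)-z_i(\beta))L_i\gamma_i(\beta)\,d\beta$ differs from $BR_i(z_i(\alpha))$ only by the principal value of the same integrand on $|\beta-\alpha|\le\delta$, which is $O(\delta^b)$ since the odd leading $1/(\beta-\alpha)$ singularity cancels in PV and the rest picks up a Hölder factor $|\beta-\alpha|^b$ from $\gamma_i$ plus an $O(\delta)$ curvature contribution.

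In the near zone $|\beta-\alpha|\le\delta$ I would Taylor-expand
\[
R_i^\epsilon(\beta,\zeta)-R_i^\epsilon(\alpha,\eta) = L_i(\beta-\alpha)\,\mathbf{s}(z_i(\alpha)) + \epsilon\gamma_i(\alpha)(\zeta-\eta)\,\mathbf{n}(z_i(\alpha)) + E,
\]
with $|E|\le C(|\beta-\alpha|^2+\epsilon|\beta-\alpha|^b)$, and substitute into $K_2$. By homogeneity of $K_2$ combined with the leading Jacobian $\epsilon L_i\gamma_i(\alpha)$, the $\mathbf{n}$-component of the integrand is odd in $u=\beta-\alpha$ and vanishes over the symmetric interval $[-\delta,\delta]$. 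Performing the $u$-integral of the $\mathbf{s}$-component produces the factor $\frac{\gamma_i(\alpha)\,\mathrm{sign}(\zeta-\eta)}{\pi}\arctan\bigl(L_i\delta/(\epsilon\gamma_i(\alpha)|\zeta-\eta|)\bigr)$; as $\epsilon/\delta\to 0$ the arctan tends to $\pi/2$, and the subsequent $\zeta$-integral of $\tfrac12\gamma_i(\alpha)\,\mathrm{sign}(\zeta-\eta)$ over $[-1,0]$ gives exactly $-(\eta+\tfrac12)\gamma_i(\alpha)\,\mathbf{s}(z_i(\alpha)) = -(\eta+\tfrac12)[\mathbf{v}](z_i(\alpha))$. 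The arctan tail together with the near-$\zeta=\eta$ boundary layer (of width $\sim\epsilon/\delta$) produces the $|\log\epsilon|$ factor after integrating $1/|\zeta-\eta|$.

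Summing the pieces yields $\mathbf{v}^\epsilon(R_i^\epsilon(\alpha,\eta)) = g_i(\alpha,\eta) + O(\epsilon/\delta + \delta^b + \epsilon|\log\epsilon|)$; optimizing $\delta$ as a suitable small positive power of $\epsilon$ and absorbing the log factor gives the claimed $C\epsilon^b|\log\epsilon|$ bound. The hard part will be the near-zone bookkeeping: simultaneously extracting the sharp leading term $-(\eta+\tfrac12)[\mathbf{v}]$ from the delicate $\arctan\times\mathrm{sign}$ cancellation and controlling all Hölder/Taylor/arctan-tail remainders uniformly in $(\alpha,\eta)\in S_i\times[-1,0]$, especially near the endpoints of the open curves ($i>n$) where $\gamma_i$ vanishes and is only $C^b$ by \textbf{(H2)}--\textbf{(H3)}, so that one must rely on Hölder rather than Lipschitz estimates throughout.
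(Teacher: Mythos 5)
Your overall architecture for the self--interaction term is genuinely different from the paper's: you compute the layer integral directly by a near/far splitting at scale $\delta$ with an explicit $\arctan$ evaluation in the near zone, whereas the paper introduces a homotopy $J(t)$ built from $f(\alpha,\alpha',\eta,\eta';t)=R_i^\epsilon(\alpha,\eta-t\eta')-R_i^\epsilon(\alpha',\eta'-t\eta')$, bounds $|J'(t)|\lesssim \epsilon^b$, and identifies $J(1)$ as an $\eta'$-average of the sheet velocity, which is then handled by the separate quantitative jump estimate of Lemma~\ref{lemma_v_jump}. Your near-zone computation (the $\mathrm{sign}(\zeta-\eta)$ integral producing $-(\eta+\tfrac12)[\mathbf{v}]$, the odd-in-$u$ cancellation of the normal component, the endpoint issue for open curves where $\gamma_i\lesssim\alpha^b$ must rescue the lost symmetry) is essentially a merged version of what the paper isolates inside Lemma~\ref{lemma_v_jump}, and the $k\neq i$ treatment is identical. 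Modulo the bookkeeping you flag (and a harmless sign: you expand $R_i^\epsilon(\beta,\zeta)-R_i^\epsilon(\alpha,\eta)$ while the kernel argument is its negative), the strategy is viable.

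There is, however, a genuine quantitative gap at the very last step. Your own error budget is $O(\epsilon/\delta+\delta^b)$: the far zone contributes $\epsilon/\delta$ because you only use the crude bound $|\nabla K_2|\lesssim|x|^{-2}$ against an $O(\epsilon)$ displacement, and the near zone contributes $\delta^b$ from the PV truncation and the H\"older part of the Taylor remainder $E$. Optimizing gives $\delta=\epsilon^{1/(1+b)}$ and a rate $\epsilon^{b/(1+b)}$, which is strictly worse than the claimed $\epsilon^b|\log\epsilon|$ since $b/(1+b)<b$; no choice of $\delta$ closes this. To recover the sharp rate you must exploit that the displacement $\mathbf{c}$ of the evaluation point is (to leading order) \emph{normal} to the chord, so that $\mathbf{y}\cdot\mathbf{c}=O(|\alpha-\alpha'|^2|\mathbf{c}|)$ rather than $O(|\alpha-\alpha'||\mathbf{c}|)$; this is exactly the content of the estimate of $\mathbf{f}(\mathbf{y},\mathbf{c})$ in \eqref{def_f} in the proof of Lemma~\ref{lemma_v_jump}, which yields a pointwise bound integrating to $O(|\mathbf{c}|^b)=O(\epsilon^b)$ over the whole curve with no $\delta$-splitting at all. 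Without this orthogonality input your argument proves the lemma only with $\epsilon^b|\log\epsilon|$ replaced by $\epsilon^{b/(1+b)}$ -- which, to be fair, still tends to zero and hence still suffices for Proposition~\ref{prop1_i} and Theorem~\ref{thmA}, but does not prove the statement as written.
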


\begin{figure}[h!]
\begin{center}
\includegraphics[scale=1.0]{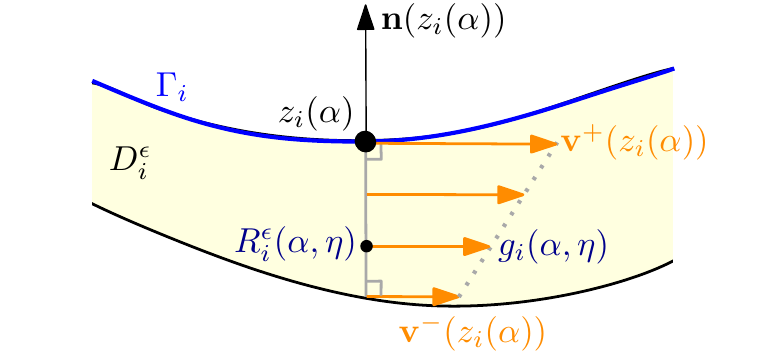}
\caption{Illustration of the definition of $g_i(\alpha,\cdot)$ (the orange arrows).\label{fig_velocity}}
\end{center}
\end{figure}

\begin{proof}
Let $i$ be any fixed index in $1,\dots, n+m$. We begin with breaking $\mathbf{v}^\epsilon$ into contributions from different components  $\{D_k^\epsilon\}_{k=1}^{n+m}$, namely
\[
\mathbf{v}^\epsilon(x) = \sum_{k=1}^{n+m}\mathbf{v}_k^\epsilon(x) := \sum_{k=1}^{n+m}\epsilon^{-1} \int_{D_i^\epsilon} K_2(x-y) dy,
\]
where the kernel $K_2$ is given by \eqref{def_k2}. Similarly, we can break $BR(z_i(\alpha))$ into 
$ \displaystyle 
BR(z_i(\alpha)) = \sum_{k=1}^{n+m} BR_k(z_i(\alpha)),$ where $BR_k$ is the contribution from the $k$-th integral in \eqref{def_BR}, and note that the PV symbol is only needed for $k= i$.

$\bullet$ \emph{Estimates for $k\neq i$ terms.} For any $k\neq i$, we aim to show that 
\begin{equation}\label{goal_ki}
|\mathbf{v}_k^\epsilon(R_i^\epsilon(\alpha,\eta)) - BR_k(z_i(\alpha))| \leq C\epsilon,
\end{equation}
where $C$ depends on $d_\Gamma, \max_k\|z_k\|_{C^2}$ and $\max_k\|\gamma_k\|_{L^\infty}$.
Applying a change of variable $y=R_k^\epsilon(\alpha',\eta')$, we can rewrite $\mathbf{v}_k^\epsilon$ as
\begin{equation}\label{v_int}
\begin{split}
\mathbf{v}_k^\epsilon(R_i^\epsilon(\alpha,\eta))&= \epsilon^{-1} \int_{D_k^\epsilon} K_2(R_i^\epsilon(\alpha,\eta)-y)\, dy \\
 &= \int_{S_k} \int_{-1}^0  \underbrace{K_2(R_i^\epsilon(\alpha,\eta) - R_k^\epsilon(\alpha',\eta'))}_{=:T_1}  \underbrace{\epsilon^{-1}\det(\nabla_{\alpha',\eta'} R_k^\epsilon(\alpha',\eta'))}_{=:T_2}  \, d\eta' d\alpha'.
\end{split}
\end{equation}
Using the facts that $R_i^\epsilon(\alpha,\eta) - R_k^\epsilon(\alpha',\eta') = z_i(\alpha)-z_k(\alpha') + O(\epsilon)$ as well as $|z_i(\alpha)-z_k(\alpha')|\geq d_{\Gamma}>0$ (recall that $d_{\Gamma}$ is as given in \eqref{def_d_gamma}), for all sufficiently small $\epsilon>0$ we have
$
T_1 = K_2(z_i(\alpha)-z_k(\alpha')) + O(\epsilon).
$
For $T_2$, the explicit formula \eqref{eq_det} for the determinant gives
$
T_2 =  L_k \gamma_k(\alpha') + O(\epsilon).
$
Plugging these into the above integral yields
\[
\mathbf{v}_k^\epsilon(R_i^\epsilon(\alpha,\eta)) = \int_{S_k}  K_2(z_i(\alpha)-z_k(\alpha')) L_k \gamma_k(\alpha')\, d\alpha' + O(\epsilon) = BR_k(z_i(\alpha))+O(\epsilon),
\]
finishing the proof of \eqref{goal_ki}.

$\bullet$ \emph{Estimates for the $k= i$ term.}  It will be more involved to control the $k=i$ term, and our goal is to show that
\begin{equation}\label{goal_ki2}
\left|\mathbf{v}_i^\epsilon(R_i^\epsilon(\alpha,\eta))   - BR_i(z_i(\alpha)) + \Big(\eta+\frac{1}{2}\Big) [\mathbf{v}](z_i(\alpha))  \right| \leq C\epsilon^b|\log\epsilon|.
\end{equation}
To begin with, we again rewrite $\mathbf{v}_i^\epsilon $ as in \eqref{v_int} with $k=i$, and plug in the formula \eqref{eq_det} for the determinant. This leads to 
\[
\begin{split}
\mathbf{v}_i^\epsilon(R_i^\epsilon(\alpha,\eta))&= \int_{S_k} \int_{-1}^0  K_2(R_i^\epsilon(\alpha,\eta) - R_i^\epsilon(\alpha',\eta')) 
\left(L_i \gamma_i(\alpha') - \epsilon L_i \gamma_i^2(\alpha') \kappa_i(\alpha')\eta'\right)
 \, d\eta' d\alpha'\\
 &=: I_1 + I_2,
 \end{split}
\]
where $I_1, I_2$ are the contributions from the two terms in the last parenthesis respectively. 
Let us control $I_2$ first, and we claim that 
\begin{equation}\label{I2}
|I_2| \leq C \epsilon|\log\epsilon|.
\end{equation} 
Using \eqref{r_diff} of Lemma \ref{lemma_injection} and the fact that $|K_2(x)|\leq |x|^{-1}$, we can bound $I_2$ as 
\begin{equation}\label{estimate_I2}
\begin{split}
|I_2| &= \left| \int_{S_k} \int_{-1}^0  K_2(R_i^\epsilon(\alpha,\eta) - R_i^\epsilon(\alpha',\eta')) 
\, \epsilon L_i \gamma_i^2(\alpha') \kappa_i(\alpha') \eta'
 \, d\eta' d\alpha' \right|
\\&\leq  C \epsilon \int_{S_k} \int_{-1}^0 \frac{\gamma_i(\alpha')}{|\alpha'-\alpha| + \epsilon |\gamma_i(\alpha')\eta'-\gamma_i(\alpha)\eta|}  \, d\eta' d\alpha'\\
&\leq C \epsilon \int_{S_k} \int_{-\|\gamma_i\|_{\infty}}^{\|\gamma_i\|_{\infty}} \frac{1}{|\alpha'-\alpha| + \epsilon |\theta'| }\, d\theta' d\alpha' \quad(\theta':=\gamma_i(\alpha')\eta'-\gamma_i(\alpha)\eta)\\
&\leq C\epsilon \int_{-1/\epsilon}^{1/\epsilon} \int_{-\|\gamma_i\|_{\infty}}^{\|\gamma_i\|_{\infty}} \frac{1}{|\beta'| +  |\theta'| }\, d\theta' d\beta' \quad(\beta':=\epsilon^{-1}(\alpha'-\alpha))\\
&\leq C\epsilon|\log \epsilon|
\end{split}
\end{equation}
where $C$ depends on $\|z_i\|_{C^2}$ and $\|\gamma_i\|_{L^\infty}$. 

In the rest of the proof we focus on estimating 
$
I_1 =  \displaystyle\int_{S_k} \int_{-1}^0  K_2(R_i^\epsilon(\alpha,\eta) - R_i^\epsilon(\alpha',\eta')) 
L_i \gamma_i(\alpha')  \, d\eta' d\alpha'.
$
For $t \in [0,1]$, let us define
\begin{align}
&f(\alpha,\alpha',\eta,\eta';t) := R_i^\epsilon(\alpha,\eta-t\eta') - R_i^\epsilon(\alpha',\eta'-t\eta'), \nonumber \\
&J(t) :=  \int_{S_k} \int_{-1}^0  K_2(f(\alpha,\alpha',\eta,\eta';t)) 
L_i \gamma_i(\alpha')  \, d\eta' d\alpha'.\label{def_J}
\end{align}
Note that in the definition of $f$, the argument $\eta-t\eta'$ of $R_i^\epsilon$ belongs to $[-1,1]$, instead of $[-1,0]$ as in the original definition of \eqref{def_r_eps}. Here $R_i^\epsilon(\alpha,\eta-t\eta')$ is defined as in the formula \eqref{def_r_eps}, even though it might not belong to $D^\epsilon_i$.
Clearly, $J(0) = I_1$. The motivation for us to define such $f$ and $J(t)$ is that at $t=1$, we have
\begin{equation}\label{J1}
\begin{split}
J(1) &=  \int_{S_k} \int_{-1}^0  K_2(R_i^\epsilon(\alpha,\eta-\eta') -z_i(\alpha')) 
L_i \gamma_i(\alpha')  \, d\eta' d\alpha'=  \int_{-1}^0 \mathbf{v}_i(R_i^\epsilon(\alpha,\eta-\eta'))  \, d\eta',
\end{split}
\end{equation}
where $\mathbf{v}_i$ is the velocity field generated by the sheet $\Gamma_i$.  
Recall that $\mathbf{v}_i$ has a jump across $\Gamma_i$, where we denote its limits on two sides by $\mathbf{v}_i^\pm$. Using Lemma~\ref{lemma_v_jump}, which we will prove momentarily, we have
\begin{equation}\label{v_approx}
\mathbf{v}_i(R_i^\epsilon(\alpha,\eta-\eta')) = \begin{cases} \mathbf{v}_i^+(z_i(\alpha)) + O(\epsilon^b |\log\epsilon|) &\text{ if } \eta-\eta' \in (0,2),\\
\mathbf{v}_i^-(z_i(\alpha))+ O(\epsilon^b |\log\epsilon|)& \text{ if }  \eta-\eta' \in (-2,0).
\end{cases}
\end{equation}

We can then split the integration domain on the right hand side of \eqref{J1} into $\eta'\in(-1,\eta)$ and $\eta'\in(\eta,0)$, and use \eqref{v_approx} to approximate the integrand in each interval. This gives 
\begin{equation}\label{J1_2}
\begin{split}
J(1)  &=  (\eta+1) \mathbf{v}_i^+(z_i(\alpha)) - \eta \mathbf{v}_i^-(z_i(\alpha)) + O(\epsilon^b |\log\epsilon|)\\
&= BR_i(z_i(\alpha)) -\Big(\eta+\frac{1}{2}\Big) [\mathbf{v}](z_i(\alpha))+ O(\epsilon^b |\log\epsilon|),
\end{split}
\end{equation}
where in the last step we used that $[\mathbf{v}](z_i(\alpha)) = [\mathbf{v}_i](z_i(\alpha))$, since all other $\mathbf{v}_k$ with $k\neq i$ are continuous across $\Gamma_i$.

Finally, it remains to control $|J(0)-J(1)|$. Note that by \eqref{r_diff}, we have 
\[
f(\alpha,\alpha',\eta,\eta';t) \geq c_0\big(|\alpha-\alpha'| + \epsilon  |\gamma_i(\alpha')\eta'-\gamma_i(\alpha)\eta|\big).
\] 
In addition, we have
\[
\left|\frac{\partial}{\partial t} f(\alpha,\alpha',\eta,\eta';t)\right| = \left|\epsilon \big(\gamma_i(\alpha) \mathbf{n}(z_i(\alpha))-\gamma_i(\alpha') \mathbf{n}(z_i(\alpha'))\big)\eta'\right| \leq C\epsilon |\alpha-\alpha'|^b,
\]
where the last inequality follows from \textbf{(H2)} and the fact that  $\mathbf{n}(z_i(\alpha)) \in C^1(S_i)$.  Therefore, for any $t\in (0,1)$, taking the $t$ derivative of \eqref{def_J} and using that $|\nabla K_2(x)| \leq |x|^{-2}$, we have
\[
\begin{split}
|J'(t)| &\leq C \int_{S_k} \int_{-1}^0 \frac{ \epsilon |\alpha-\alpha'|^b \gamma_i(\alpha') }{\big(|\alpha-\alpha'| + \epsilon  |\gamma_i(\alpha')\eta'-\gamma_i(\alpha)\eta|\big)^{2}}d\eta' d\alpha'\\
&\leq C \epsilon \int_{S_k} \int_{-1}^0 \frac{ \gamma_i(\alpha') }{|\alpha-\alpha'|^{1-b}\left(|\alpha-\alpha'| + \epsilon |\gamma_i(\alpha')\eta'-\gamma_i(\alpha)\eta|\right)}d\eta' d\alpha' \\
& \leq C \epsilon^b \int_{-1/\epsilon}^{1/\epsilon} \int_{-\|\gamma_i\|_{\infty}}^{\|\gamma_i\|_{\infty}} \frac{1}{|\beta'|^{1-b}(|\beta'| + |\theta'|) }\, d\theta' d\beta' \quad(\theta':=\gamma_i(\alpha')\eta'-\gamma_i(\alpha)\eta,\, \beta':=\epsilon^{-1}(\alpha'-\alpha))\\
&\leq  C \epsilon^b  \int_{-1/\epsilon}^{1/\epsilon} |\beta'|^{b-1}\log\left(1+\frac{\|\gamma_i\|_{L^\infty}}{|\beta'|}\right) d\beta'\\
& \leq C\epsilon^b,
\end{split}
\]
where $C$ depends on $b$, $\|\gamma_i\|_{C^b(S_i)}$, $\|z_i\|_{C^2(S_i)}$ and $F_\Gamma$. This leads to \[
|J(1)-I_1| = |J(1)-J(0)| \leq C\epsilon^b |\log\epsilon|.
\]
Finally, combining this with \eqref{J1_2} and \eqref{I2} yields \eqref{goal_ki2}, finishing the proof of the $k=i$ case.
We can then conclude the proof by taking the sum of this estimate with all the $k\neq i$ estimates in \eqref{goal_ki}. 
\end{proof}

The following lemma proves \eqref{v_approx}. Let $\mathbf{v}_i$ be the velocity field generated by the sheet $\Gamma_i$, which is smooth in $\mathbb{R}^2\setminus \Gamma_i$, and has a discontinuity across $\Gamma_i$. It is known that $\mathbf{v}_i$ converges to $\mathbf{v}_i^\pm$ respectively on the two sides of $\Gamma_i$ \cite{Majda-Bertozzi:vorticity-incompressible-flow}.  However, we were unable to find a quantitative convergence rate (in terms of the distance from the point to $\Gamma_i$) in the literature, especially under the assumption that $\gamma_i$ is only in $C^b(S_i)$ for the open curves. Below we prove such an estimate.

\begin{lemma}\label{lemma_v_jump}
For $i=1,\ldots,n+m$, let $\mathbf{v}_i$ be the velocity field generated by the sheet $\Gamma_i$, given by
\[
\mathbf{v}_i(x) := \int_{S_i}  K_2(x - z_i(\alpha'))\, \gamma_i(\alpha') |z_i'(\alpha')| \,d\alpha' \quad\text{ for }x\in\mathbb{R}^2\setminus\Gamma_i.
\]
Then there exist constants $C,\epsilon_0 >0$ depending on on $b$ (as in \textup{\textbf{(H2)}}), $\|z_i\|_{C^2(S_i)}$, $\|\gamma_i\|_{C^b(S_i)}$ and $F_\Gamma$, such that for all $\epsilon \in (0,\epsilon_0)$ and $\eta\in(-2,2)$ we have
\begin{align}
&\left| \mathbf{v}_i(R^\epsilon_i(\alpha,\eta)) - \mathbf{v}_i^+(z_i(\alpha)) \right| \le C \epsilon^b|\log \epsilon | \quad \text{ if }\eta \in (0,2),\label{positive_eta}\\
&\left| \mathbf{v}_i(R^\epsilon_i(\alpha,\eta)) - \mathbf{v}_i^-(z_i(\alpha)) \right| \le C \epsilon^b|\log\epsilon| \quad \text{ if }\eta \in (-2,0), \label{negative_eta}
\end{align}
where
\[\mathbf{v}_i^{+} = BR_i(z_i(\alpha)) + \frac{\mathbf{n}(z_i(\alpha))^\perp\gamma_i(\alpha)}{2},\quad
\mathbf{v}_i^{-} = BR_i(z_i(\alpha)) - \frac{\mathbf{n}(z_i(\alpha))^\perp\gamma_i(\alpha)}{2},
\]
and $BR_i$ is the contribution from the $i$-th integral in \eqref{def_BR}.
\end{lemma}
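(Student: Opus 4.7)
The plan is to prove \eqref{positive_eta} only; \eqref{negative_eta} then follows by reflection across $\Gamma_i$. The case $\gamma_i(\alpha)=0$ (possible only at the endpoints of open curves) is trivial, so I assume $\gamma_i(\alpha)>0$ and set $T := \mathbf{s}(z_i(\alpha))$, $N := T^\perp$, $d := \epsilon\gamma_i(\alpha)\eta \in (0, 2\epsilon\|\gamma_i\|_\infty]$, and $x := z_i(\alpha)+dN$. Since $\mathbf{v}_i$ is smooth off $\Gamma_i$ and extends continuously at $s = 0^+$ with limit $\mathbf{v}_i^+(z_i(\alpha))$, the fundamental theorem of calculus (justified a posteriori by the bound on $F$ below) gives
\begin{equation*}
\mathbf{v}_i(x) - \mathbf{v}_i^+(z_i(\alpha)) = \int_0^d F(s)\, ds, \qquad F(s) := \int_{S_i} \nabla K_2\bigl(z_i(\alpha)+sN-z_i(\alpha')\bigr) N\, \gamma_i(\alpha') L_i\, d\alpha'.
\end{equation*}
Thus it suffices to prove $|F(s)| \leq C\bigl(s^{b-1}+|\log s|\bigr)$ uniformly for $s \in (0, d)$, which then yields $\int_0^d F(s)\, ds = O(\epsilon^b + \epsilon|\log\epsilon|) = O(\epsilon^b|\log\epsilon|)$.

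The decisive input is a cancellation for the \emph{tangent-sheet} analogue $F^{\mathrm{tan}}(s) := \gamma_i(\alpha) L_i \int_\mathbb{R} \nabla K_2(-L_iT\beta+sN) N\, d\beta$. A direct computation of $\nabla K_2(w) N$ at $w = -L_iT\beta + sN$ in the $(T,N)$ basis yields components $\bigl(\frac{s^2 - L_i^2\beta^2}{2\pi(s^2+L_i^2\beta^2)^2},\, \frac{L_i\beta s}{\pi(s^2+L_i^2\beta^2)^2}\bigr)$. The $N$-component is odd in $\beta$; for the $T$-component, rescaling $u := L_i\beta/s$ and using $\int_\mathbb{R}\frac{1-u^2}{(1+u^2)^2}\,du = 0$ gives zero. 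Consequently $F^{\mathrm{tan}}(s) \equiv 0$ for all $s > 0$.

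Fix a cutoff $\delta>0$ (depending only on $\|z_i\|_{C^2}$, $F_\Gamma$, $d_\Gamma$) and decompose $F(s)$ into near ($|\beta| := |\alpha'-\alpha| < \delta$) and far ($|\beta|\geq\delta$) parts. The far part is bounded by $C/\delta^2$ using $|\nabla K_2|\leq C/|w|^2$ and the arc-chord estimate, contributing $O(\epsilon)$ after the $s$-integration. Since $F^{\mathrm{tan}}\equiv 0$, the truncated tangent-sheet contribution satisfies $F^{\mathrm{tan,near}}(s) = -F^{\mathrm{tan,far}}(s)$, and the latter is bounded uniformly by $C/\delta$ via explicit integration, also contributing $O(\epsilon)$. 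The remaining difference $F^{\mathrm{near}}(s) - F^{\mathrm{tan,near}}(s)$ splits into: (i) a \emph{curvature error} from $|z_i(\alpha+\beta) - z_i(\alpha) - L_iT\beta|\leq C\beta^2$ combined with $|\nabla^2 K_2|\leq C/|w|^3$ and the lower bound $|w| \gtrsim s+|\beta|$ on interpolation points (which follows from Lemma~\ref{lemma_injection}, applied in its footnote-extended form to $\eta' := s/(\epsilon\gamma_i(\alpha))\in (0,2)$ against $\eta'' := 0$), producing integrand $\leq C\beta^2/(s+|\beta|)^3$ with $\beta$-integral $C|\log(\delta/s)|$; and (ii) a \emph{density error} using $|\gamma_i(\alpha+\beta) - \gamma_i(\alpha)| \leq C|\beta|^b$ (hypothesis \textbf{(H2)}) and $|\nabla K_2|\leq C/|w|^2$, producing integrand $\leq C|\beta|^b/(s+|\beta|)^2$ with $\beta$-integral $Cs^{b-1}$. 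Integrating over $s\in(0,d)$ with $d = O(\epsilon)$ and $b \in (0,1)$ yields $O(d|\log\epsilon| + d^b) = O(\epsilon^b|\log\epsilon|)$.

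The main obstacle is noticing and exploiting the identity $\int_\mathbb{R}\frac{1-u^2}{(1+u^2)^2}\,du = 0$: without this tangent-sheet cancellation, the integrand $F(s)$ would only be $O(1/s)$, making $\int_0^d F(s)\,ds$ merely logarithmic in $\epsilon$ and not yielding the required Hölder decay. Beyond this, the analysis is a careful balance of Taylor, Hölder, and singular-integral estimates, with the curvature error contributing a single logarithmic factor and the density error a single Hölder factor $s^{b-1}$, which combine to give the claimed bound without further subtleties.
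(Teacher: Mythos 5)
Your approach---writing $\mathbf{v}_i(R^\epsilon_i(\alpha,\eta))-\mathbf{v}_i^+(z_i(\alpha))=\int_0^d F(s)\,ds$ along the normal ray, with $d=\epsilon\gamma_i(\alpha)\eta$, and exploiting the vanishing of the normal derivative of the flat-sheet velocity field---is genuinely different from the paper's, which decomposes $\mathbf{v}_i(R^\epsilon_i(\alpha,\eta))$ itself into a tangential part $A_1$ (compared with $BR_i$) and a normal part $A_2$ (whose leading term $A_{23}$ produces the half-jump via $\int\frac{d\theta}{1+\theta^2}=\pi$). Your identity $\int_{\mathbb{R}}\frac{1-u^2}{(1+u^2)^2}\,du=0$ plays the role of the paper's oddness argument for $B_2$ combined with the Plemelj computation for $A_{23}$. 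For closed curves, and for points of open curves at distance $\gtrsim\epsilon$ from the endpoints, your estimates check out: the kernel computation is correct, the lower bound $|w|\gtrsim s+|\beta|$ on interpolation points does follow from the footnote-extended Lemma~\ref{lemma_injection}, and the curvature and density errors integrate to $C(|\log s|+s^{b-1})$ as claimed. One secondary caveat: your argument only shows the one-sided limit along the normal exists and controls the distance to it; identifying that limit with $\mathbf{v}_i^+=BR_i+\tfrac12\gamma_i\mathbf{n}^\perp$ is delegated to the classical jump relations, which the paper instead derives as part of the proof (this matters because the paper is precisely worried about the regime where $\gamma_i$ is only $C^b$ on an open curve).

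The genuine gap is the endpoint regime of the open curves, and it is visible from the fact that your proof never uses $\gamma_i(0)=\gamma_i(1)=0$ from \textbf{(H3)}, whereas the conclusion is false without that hypothesis. When $0<\alpha<\delta$, the ``near'' region of the actual curve is the truncated interval $\beta\in(-\alpha,\delta)$, so the truncated tangent-sheet term equals minus the flat-sheet integral over $(-\infty,-\alpha)\cup(\delta,\infty)$, and the piece over $(-\infty,-\alpha)$ has size of order $\gamma_i(\alpha)\big(\max(\alpha,s)\big)^{-1}$ (compute $\int_\alpha^\infty (s^2+L_i^2\beta^2)^{-1}d\beta$), not the uniform $C/\delta$ you assert. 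After the $s$-integration this contributes $\int_0^d\gamma_i(\alpha)\max(\alpha,s)^{-1}ds$, which is $O(\epsilon/\alpha)$ if $\gamma_i$ is merely bounded---larger than $\epsilon^b|\log\epsilon|$ when $\alpha$ is small---and is only brought down to $O(\epsilon^b)$ by inserting $\gamma_i(\alpha)\leq C\alpha^b$, which follows from \textbf{(H3)} together with the H\"older continuity. This is exactly the two-case analysis the paper performs for $B_2$ and $A_{23}$ (``if $\alpha>|\mathbf{c}|$\,'' versus ``if $0<\alpha\leq|\mathbf{c}|$\,''), so the repair is short; but as written, the claim that the truncated tangent-sheet contribution is ``bounded uniformly by $C/\delta$'' is false near the endpoints, and the argument does not close there.
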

\begin{proof} 
We will show \eqref{positive_eta} only since \eqref{negative_eta} can be treated in the same way.   From the definition of $R^\epsilon_i$ in \eqref{def_r_eps}, we have
\begin{align*}
\mathbf{v}_i(R^\epsilon_i(\alpha,\eta)) 
& = \frac{L_i}{2\pi}\int_{S_i}\frac{\left(z_i(\alpha)-z_i(\alpha') \right)^{\perp} \gamma_i(\alpha')}{\left| z_i(\alpha)-z_i(\alpha')+\epsilon\eta\mathbf{n}(z_i(\alpha))\gamma_i(\alpha)\right|^2} \,d\alpha'  + \frac{L_i}{2\pi}\int_{S_i}\frac{\epsilon\eta\mathbf{n}(z_i(\alpha))^{\perp}\gamma_i(\alpha)  \gamma_i(\alpha')}{\left| z_i(\alpha)-z_i(\alpha')+\epsilon\eta\mathbf{n}(z_i(\alpha))\gamma_i(\alpha)\right|^2}\, d\alpha'\\
& =: A_1 + A_2.
\end{align*}
We claim that for all $\epsilon>0$ sufficiently small and $\eta\in[0,2)$, we have
\begin{align}
&|A_1 - BR_i(z(\alpha)) | \le C\epsilon^b|\log\epsilon|, \label{tangential_part}\\
&\left| A_2 - \frac{\mathbf{n}(z(\alpha))^\perp\gamma(\alpha)}{2} \right| \le C\epsilon^b, \label{normal_part}
\end{align}
and note that these two claims immediately yield \eqref{positive_eta}. From now on, let us fix $i\in \left\{1,\ldots,n+m\right\}$ and omit it in the notation for simplicity. Throughout this proof, let us denote 
\[
\mathbf{y}(\alpha,\alpha'):= z(\alpha)-z(\alpha')
\quad\text{ and }\quad
\mathbf{c}(\alpha):=\epsilon\eta\mathbf{n}(z(\alpha))\gamma(\alpha),\]
 so that 
\[
A_1 = \frac{L}{2\pi} \int_{S} \frac{\mathbf{y}^\perp(\alpha,\alpha') \gamma(\alpha')}{|\mathbf{y}(\alpha,\alpha')+\mathbf{c}(\alpha)|^2} d\alpha', \quad A_2 = \frac{L}{2\pi} \int_S  \frac{\mathbf{c}^\perp(\alpha)\gamma(\alpha')}{|\mathbf{y}(\alpha,\alpha')+\mathbf{c}(\alpha)|^2 } d\alpha'.
\]
Note that 
\begin{equation}\label{y222}
F_\Gamma^{-1} |\alpha-\alpha'| \leq |\mathbf{y}(\alpha,\alpha')|\leq \|z\|_{C^1}|\alpha-\alpha'|.
\end{equation}
For the closed curves with $i=1,\dots,n$, since $z$ has period 1, we can always set $\alpha-\alpha' \in [-\frac{1}{2},\frac{1}{2})$ in this proof.

Applying \eqref{r_diff} (with $\eta'=0$), we have
\begin{equation}\label{y111}
|\mathbf{y}(\alpha,\alpha')+\mathbf{c}(\alpha)|^2 \geq c_0(|\alpha-\alpha'|^2 + \epsilon^2\eta^2 \gamma^2(\alpha)) = c_0(|\alpha-\alpha'|^2 +|\mathbf{c}(\alpha)|^2).
\end{equation}
Since $z'(\alpha)=L\mathbf{s}(z(\alpha))$, let us define
\[
\tilde{\mathbf{y}}(\alpha,\alpha') := L\mathbf{s}(z(\alpha))(\alpha-\alpha'),
\] which is a close approximation of $\mathbf{y}$ in the sense that 
\begin{equation}\label{diff_y}
|\mathbf{y}(\alpha,\alpha')-\tilde{\mathbf{y}}(\alpha,\alpha')|\leq \|z\|_{C^2}(\alpha-\alpha')^2.
\end{equation} 
Using $\mathbf{s}(z(\alpha)) \perp \mathbf{n}(z(\alpha))$, we have
\begin{equation}\label{eq_tilde_y}
|\tilde{\mathbf{y}}(\alpha,\alpha')+\mathbf{c}(\alpha)|^2 = L^2|\alpha-\alpha'|^2 + \epsilon^2\eta^2 \gamma^2(\alpha) = L^2 |\alpha-\alpha'|^2 + |\mathbf{c}(\alpha)|^2.
\end{equation}
From now on, for notational simplicity, we compress the dependence of $\mathbf{y}(\alpha,\alpha'), \tilde{\mathbf{y}}(\alpha,\alpha'), \mathbf{c}(\alpha)$ on $\alpha$ and $\alpha'$ in the rest of the proof.

$\bullet$ \emph{Estimate \eqref{tangential_part}.} Note that $BR_i(z(\alpha))$ can also be written using the above notations as
\[
BR_i(z(\alpha)) = \frac{L}{2\pi} PV\int_S \frac{\mathbf{y}^\perp}{|\mathbf{y}|^2} \gamma(\alpha')d\alpha,
\]
thus $A_1-BR_i(z(\alpha))$ can be written as follows:
\[
\begin{split}
A_1-BR_i(z(\alpha)) &= \frac{L}{2\pi} PV \int_S \underbrace{\left( \frac{\mathbf{y}^\perp}{|\mathbf{y}+\mathbf{c}|^2}-\frac{\mathbf{y}^\perp}{|\mathbf{y}|^2}\right)}_{=:\mathbf{f}(\mathbf{y},\mathbf{c})}\gamma(\alpha') d\alpha'\\
&=  \frac{L}{2\pi}  \int_S \mathbf{f}(\mathbf{y},\mathbf{c}) (\gamma(\alpha')-\gamma(\alpha)) d\alpha' +  \frac{L\gamma(\alpha)}{2\pi} PV \int_S \mathbf{f}(\mathbf{y},\mathbf{c}) d\alpha' \\ 
&=: A_{11}+A_{12}.
\end{split}
\]
A direct computation gives
\begin{equation}\label{def_f}
\mathbf{f}(\mathbf{y},\mathbf{c})
= -\frac{\mathbf{y}^\perp}{|\mathbf{y}|^2}\,\frac{2\mathbf{y}\cdot\mathbf{c}+|\mathbf{c}|^2}{|\mathbf{y}+\mathbf{c}|^2}.
\end{equation}
Since $\mathbf{y}\cdot\mathbf{c}=(\mathbf{y}-\tilde{\mathbf{y}})\cdot\mathbf{c} \leq C|\alpha-\alpha'|^2 |\mathbf{c}|,$ (where we use $\tilde{\mathbf{y}}\perp\mathbf{n}(z(\alpha))$ and \eqref{diff_y}), combining this with \eqref{y222} and \eqref{y111} gives a crude bound
\[
|\mathbf{f}(\mathbf{y},\mathbf{c})| \lesssim \frac{|\alpha-\alpha'|^2 |\mathbf{c}| + |\mathbf{c}|^2}{|\alpha-\alpha'| (|\alpha-\alpha'|^2+|\mathbf{c}|^2)}.
\]
Plugging this into $A_{11}$ and using the H\"older continuity of $\gamma$, we have
\[
\begin{split}
|A_{11}| &\lesssim \int_{S}\frac{|\alpha-\alpha'|^2 |\mathbf{c}| + |\mathbf{c}|^2}{|\alpha-\alpha'| (|\alpha-\alpha'|^2+|\mathbf{c}|^2)} |\alpha-\alpha'|^b d\alpha'\\
&\lesssim  \int_{|\theta|<|\mathbf{c}|}(|\theta|^{1+b}|\mathbf{c}|^{-1}+|\theta|^{b-1}) d\theta +  \int_{|\mathbf{c}|\leq |\theta|\leq 1} (|\mathbf{c}| |\theta|^{b-1} + |\mathbf{c}|^2|\theta|^{b-3})d\theta  \quad(\theta:=\alpha'-\alpha)\\
&\lesssim |\mathbf{c}|^b \leq C\epsilon^b,
\end{split}
\]
where the last step follows from the fact that $|\mathbf{c}|\leq 2\epsilon\|\gamma\|_\infty$.
Now let us turn to $A_{12}$, which requires a more delicate estimate of $\mathbf{f}(\mathbf{y},\mathbf{c})$. Let us break $A_{12}$ as
\[
\begin{split}
A_{12} &= \frac{L\gamma(\alpha)}{2\pi} \int_S (\mathbf{f}(\mathbf{y},\mathbf{c})-\mathbf{f}(\tilde{\mathbf{y}},\mathbf{c})) d\alpha' +  \frac{L\gamma(\alpha)}{2\pi}PV \int_S \mathbf{f}(\tilde{\mathbf{y}},\mathbf{c}) d\alpha'=: B_1 + B_2.
\end{split}
\]
For $B_1$, let us take the gradient of $\mathbf{f}(\mathbf{y},\mathbf{c})$ (as in \eqref{def_f}) in the first variable. An elementary computation yields that
\begin{equation}\label{grad_f}
|\nabla_{\mathbf{x}} \mathbf{f}(\mathbf{x},\mathbf{c})| \leq C|\mathbf{x}|^{-2}\min\Big\{1, \frac{|\mathbf{c}|}{|\mathbf{x}|}\Big\}
\end{equation}
as long as $\mathbf{x}$ satisfies
\begin{equation}\label{cond_x}
|\mathbf{x} +  \mathbf{c}|^2 \geq c_0(|\mathbf{x}|^2+|\mathbf{c}|^2).
 \end{equation}
We point out that $\mathbf{x}=\xi\mathbf{y}+(1-\xi)\tilde{\mathbf{y}}$ indeed satisfies \eqref{cond_x} for all $\xi\in[0,1]$: to see this, in the proof of Lemma~\ref{lemma_injection}, if we replace $T_1$ in \eqref{temp001} by  $\xi\mathbf{y}+(1-\xi)\tilde{\mathbf{y}}$, one can easily check the proof still goes through for $\xi\in[0,1]$. In addition, for any $\xi\in[0,1]$ we also have 
\begin{equation}\label{temp000}
|\xi\mathbf{y}+(1-\xi)\tilde{\mathbf{y}}| \geq c_0 |\alpha-\alpha'|.
\end{equation}
Thus the gradient estimate \eqref{grad_f} together with \eqref{diff_y} and \eqref{temp000} yields
\[
|f(\mathbf{y}, \mathbf{c}) -f(\tilde{\mathbf{y}}, \mathbf{c})| \lesssim \min\{1, |\mathbf{c}||\alpha-\alpha'|^{-1}\}\lesssim \min\{1,\epsilon|\alpha-\alpha'|^{-1}\},
\]
and plugging this into $B_1$ gives
\[
\begin{split}
|B_1| &\lesssim \epsilon + \int_{\epsilon<|\alpha-\alpha'|<1} \epsilon|\alpha-\alpha'|^{-1} d\alpha' \lesssim \epsilon|\log\epsilon|.
\end{split}
\] 
As for $B_2$, using the definition of $\tilde{\mathbf{y}}$, the identity \eqref{eq_tilde_y} and the fact that $\tilde{\mathbf{y}}\cdot\mathbf{c}=0$, we have
\[
\begin{split}
B_2 &=  \frac{L\gamma(\alpha)}{2\pi} PV \int_S -\frac{\tilde{\mathbf{y}}^\perp}{|\tilde{\mathbf{y}}|^2}\,\frac{|\mathbf{c}|^2}{|\tilde{\mathbf{y}}+\mathbf{c}|^2} d\alpha'\\
&=  \frac{L\gamma(\alpha)|\mathbf{c}|^2 \mathbf{n}(z(\alpha))}{2\pi L } PV \int_S \frac{\alpha'-\alpha}{|\alpha'-\alpha|^2 (L^2 |\alpha'-\alpha|^2 + |\mathbf{c}|^2)}d\alpha'.
\end{split}
\]
For the closed curves $i=1,\dots,n$, we immediately have $B_2=0$ since $\alpha-\alpha' \in [-\frac{1}{2},\frac{1}{2})$, and the integrand is an odd function of $\alpha'-\alpha$. 

For the open curves $i=n+1,\dots,n+m$, the above integral becomes
\[
\begin{split}
B_2 &= \frac{L\gamma(\alpha)|\mathbf{c}|^2 \mathbf{n}(z(\alpha))}{2\pi L } PV \int_{-\alpha}^{1-\alpha} \frac{\theta}{|\theta|^2 (L^2 |\theta|^2 + |\mathbf{c}|^2)}d\theta \quad(\theta:=\alpha'-\alpha)\\
&= \frac{L\gamma(\alpha)|\mathbf{c}|^2 \mathbf{n}(z(\alpha))}{2\pi L } \int_{\alpha}^{1-\alpha} \frac{\theta}{\theta^2 (L^2 \theta^2 + |\mathbf{c}|^2)}d\theta,
\end{split}
\]
where in the second inequality we used that the integral in $[-\alpha,\alpha]$ gives zero contribution to the principal value, since the integrand is odd. 

Next we discuss two cases. If $\alpha > |\mathbf{c}|$, we bound the integrand by $C\theta^{-3}$, which gives 
\[
|B_2| \leq C \gamma(\alpha) |\mathbf{c}|^2 \alpha^{-2} \leq C |\mathbf{c}|^2 \alpha^{b-2} \leq  C |\mathbf{c}|^b \leq C\epsilon^b.
\] where the second inequality follows from the assumption $\gamma(0)=0$ for an open curve in \textbf{(H3)}, as well as the H\"older continuity of $\gamma$. And if $0<\alpha\leq |\mathbf{c}|$, the integrand can be bounded above by $\theta^{-1}|\mathbf{c}|^{-2}$, which immediately leads to 
\[
|B_2| \leq C\gamma(\alpha)|\log\alpha| \leq C|\mathbf{c}|^b|\log|\mathbf{c}|| \leq C\epsilon^b|\log\epsilon|.
\] In both cases we have
$
|B_2| \leq C\epsilon^b|\log\epsilon|,
$
and combining it with the $B_1$ and $A_{11}$ estimates gives \eqref{tangential_part}.

$\bullet$ \emph{Estimate \eqref{normal_part}.} We break $A_2$ into
\begin{align*}
A_2 & = \frac{L\mathbf{c}^\perp}{2\pi} \int_S  \frac{\gamma(\alpha')-\gamma(\alpha)}{|\mathbf{y}+\mathbf{c}|^2} d\alpha' +  \frac{L\mathbf{c}^\perp\gamma(\alpha)}{2\pi} \int_S \left( \frac{1}{|\mathbf{y}+\mathbf{c}|^2} - \frac{1}{|\tilde{\mathbf{y}}+\mathbf{c}|^2} \right)d\alpha' +\frac{L\mathbf{c}^\perp\gamma(\alpha)}{2\pi} \int_S \frac{1}{|\tilde{\mathbf{y}}+\mathbf{c}|^2} d\alpha'\\
&=: A_{21} + A_{22} + A_{23}.
\end{align*}
For $A_{21}$, \eqref{y111} and the H\"older continuity of $\gamma$ immediately lead to 
\begin{equation}\label{a21}
|A_{21}| \leq C|\mathbf{c}|\int_{S} \frac{|\alpha-\alpha'|^b}{|\alpha-\alpha'|^2+|\mathbf{c}|^2} d\alpha' \leq |\mathbf{c}|^b \leq C\epsilon^b.
\end{equation}
For $A_{22}$, its integrand can be controlled as
\[
 \left|\frac{1}{|\mathbf{y}+\mathbf{c}|^2} - \frac{1}{|\tilde{\mathbf{y}}+\mathbf{c}|^2}\right|\leq \frac{|\mathbf{y}-\mathbf{\tilde y}| (|\mathbf{y}+\mathbf{c}|+|\tilde{\mathbf{y}}+\mathbf{c}|)}{|\mathbf{y}+\mathbf{c}|^2 |\tilde{\mathbf{y}}+\mathbf{c}|^2} \leq \frac{C|\alpha-\alpha'|^2}{(|\alpha-\alpha'|^2+|\mathbf{c}|^2)^{3/2}},
\]
where the last step follows from \eqref{y111}, \eqref{diff_y} and \eqref{eq_tilde_y}. This allows us to control $A_{22}$ as
\begin{equation}\label{a22}
|A_{22}| \leq C|\mathbf{c}| \int_{-1}^1 \frac{\theta^2}{(\theta^2 + |\mathbf{c}|^2)^{3/2}}d\theta \leq C|\mathbf{c}| \,\big|\log|\mathbf{c}|\big| \leq C\epsilon|\log\epsilon|.
\end{equation}
Finally, for the $A_{23}$ term, \eqref{eq_tilde_y} gives
\[
A_{23} = \frac{L\mathbf{c}^\perp\gamma(\alpha)}{2\pi} \int_S \frac{1}{L^2|\alpha'-\alpha|^2+|\mathbf{c}|^2} d\alpha' = \frac{\mathbf{n}^\perp(\alpha)\gamma(\alpha)}{2\pi } \int_I \frac{1}{\theta^2 + 1} d\theta \quad(\text{set }\theta := \frac{L(\alpha'-\alpha)}{|\mathbf{c}|}),
\]
where the integration interval $I=(-\frac{L}{2|\mathbf{c}|},\frac{L}{2|\mathbf{c}|})$ for $i=1,\dots,n$, and $I=(-\frac{L\alpha}{|\mathbf{c}|},\frac{L(1-\alpha)}{|\mathbf{c}|})$ for $i=n+1,\dots,n+m$, and in the last equality we also used that $\frac{\mathbf{c}^\perp}{|\mathbf{c}|}=\mathbf{n}^\perp$. For $i=1,\dots,n$, one can easily check that
\[
\left|\int_I \frac{1}{\theta^2 + 1} d\theta-\pi\right| = 2\int_{\frac{L}{2|\mathbf{c}|}}^\infty \frac{1}{\theta^2+1} d\theta \leq C|\mathbf{c}|\leq C\epsilon,
\]
which immediately leads to
\[
\left|A_{23}-\frac{\mathbf{n}(z(\alpha))^\perp\gamma(\alpha)}{2}\right| =\left|\frac{\mathbf{n}^\perp(\alpha)\gamma(\alpha)}{2\pi } \left(\int_I \frac{1}{\theta^2 + 1} d\theta-\pi\right) \right| \leq C\epsilon
\]
for $i=1,\dots,n$. Next we turn to the open curves $i=n+1,\dots,n+m$, and let us assume $\alpha\in[0,\frac{1}{2}]$ without loss of generality. In this case we have
\[
\left|\int_I \frac{1}{\theta^2 + 1} d\theta-\pi\right| = \int^{-\frac{L\alpha}{|\mathbf{c}|}}_{-\infty}\frac{1}{\theta^2+1} d\theta +\int_{\frac{L(1-\alpha)}{|\mathbf{c}|}}^{\infty}\frac{1}{\theta^2+1} d\theta \leq \min\left\{C\frac{|\mathbf{c}|}{\alpha},\frac{\pi}{2}\right\}+C\epsilon.
\]
where we used $1-\alpha>\frac{1}{2}$ to control the second integral by $C\epsilon$. Using the above inequality as well as the fact that $\gamma(\alpha)\leq C\alpha^b$ due to \textbf{(H3)}, we have
\[
\left|A_{23}-\frac{\mathbf{n}(z(\alpha))^\perp\gamma(\alpha)}{2}\right|=\frac{\gamma(\alpha)}{2\pi } \left|\int_I \frac{1}{\theta^2 + 1} d\theta-\pi\right| \leq C\alpha^b \min\left\{\frac{|\mathbf{c}|}{\alpha},1\right\}+C\epsilon \leq C(|\mathbf{c}|^b+\epsilon)\leq C\epsilon^b
\]
for $i=n+1,\dots,n+m$.
Finally, combining the $A_{23}$ estimates together with \eqref{a21} and \eqref{a22} yields \eqref{normal_part}.
\end{proof}

\section{Constructing a divergence-free perturbation}
\label{subsec_p}

In this section, we aim to construct a divergence-free velocity field $\mathbf{u}^\epsilon:D^\epsilon\to\mathbb{R}^2$, such that $-\mathbf{u}^\epsilon$ tends to make each $D_i^\epsilon$ ``more symmetric''.
Let $\mathbf{u}^\epsilon: D^\epsilon\to\mathbb{R}^2$ be given by
\begin{equation}\label{u_def}
\mathbf{u}^\epsilon := x + \nabla p^\epsilon\quad\text{ in } D^\epsilon,
\end{equation}
where the function $p^\epsilon: \overline{D^\epsilon} \to \mathbb{R}$ is chosen such that 
\begin{equation}\label{u_div_free}
\nabla\cdot \mathbf{u}^\epsilon = 0\quad\text{ in }D^\epsilon,
\end{equation}
 and on each connected component $l$ of $\partial D^\epsilon$, $u^\epsilon$ satisfies
\begin{equation}\label{int_boundary}
\int_l \mathbf{u}^\epsilon \cdot n \,d\sigma = 0,
\end{equation}
where $n$ is the unit normal of $l$ pointing outwards of $D^\epsilon$. Note that $\partial D^\epsilon$ has a total of $2n+m$ connected components:  $D_i^\epsilon$ is doubly-connected for $i=1,\dots,n$ (denote its outer and inner boundaries by $\partial D_{i,\text{out}}^\epsilon$ and $\partial D_{i,\text{in}}^\epsilon$; note that $\partial D_{i,\text{in}}^\epsilon$ coincides with $\Gamma_i$), whereas it is simply-connected for $i=n+1,\dots,n+m$ (denote its boundary by $\partial D_i^\epsilon$).

Next we show that there indeed exists a function $p^\epsilon$ so that $\mathbf{u}^\epsilon$ satisfies \eqref{u_div_free}--\eqref{int_boundary}. Clearly, \eqref{u_div_free} requires that $p^\epsilon$ satisfies 
\begin{equation}\label{def_p_laplacian}
\Delta p^\epsilon = -2 \quad\text{ in }D^\epsilon.
\end{equation}
As for the boundary conditions, we let 
\begin{equation}\label{bdry_curve}
p^\epsilon|_{\partial D_i^\epsilon} = 0 \quad\text{for }i=n+1,\dots,n+m,
\end{equation} so the  divergence theorem yields that \eqref{int_boundary} is satisfied for each $l = \partial D_i^\epsilon$ for $i=n+1,\dots,n+m$. As for $i=1,\dots,n$, we define 
\begin{equation}\label{bdry_loop}
p^\epsilon = \begin{cases}
0 & \text{ on } \partial D_{i,\text{out}}^\epsilon \\
c_i^\epsilon  &\text { on } \partial D_{i,\text{in}}^\epsilon = \Gamma_i
\end{cases}\quad\text{ for } i = 1,\dots, n,
\end{equation} where $c_i^\epsilon>0$ is the unique constant such that 
\begin{equation}\label{bdry_loop2}
\int_{\partial U_i} \nabla p^\epsilon \cdot n d\sigma = -2|U_i| \quad\text{ for } i = 1,\dots, n,
\end{equation} where $U_i$ is the domain enclosed by $\partial D_{i,\text{in}}^\epsilon=\Gamma_i$ (thus $U_i$ is independent of $\epsilon$), and $n$ is the outer normal of $U_i$ (thus the inner normal of $D_i^\epsilon$). The existence of $c_i^\epsilon$ is guaranteed by \cite[Lemma 2.5]{GomezSerrano-Park-Shi-Yao:radial-symmetry-stationary-solutions}. One can then check that $\int_{\partial U_i} \mathbf{u}^\epsilon \cdot n d\sigma = 0$. Applying the divergence theorem in $D^\epsilon_i$ then gives us that $\int_{\partial D_{i,\text{out}}^\epsilon} \mathbf{u}^\epsilon \cdot n d\sigma = 0$ as well.

In \cite{GomezSerrano-Park-Shi-Yao:radial-symmetry-stationary-solutions} we proved a rearrangement inequality for such $p^\epsilon$ in a similar spirit of Talenti's  rearrangement inequality for elliptic equations \cite{Talenti:rearrangements}, which we state below.

\begin{lemma}[{\cite[Proposition 2.6]{GomezSerrano-Park-Shi-Yao:radial-symmetry-stationary-solutions}}]
The function $p^\epsilon: \overline{D^\epsilon}\to \mathbb{R}$ defined in \eqref{def_p_laplacian}--\eqref{bdry_loop2} satisfies the following in each $D_i^\epsilon$ for $i=1,\dots,n+m$:
\begin{equation}\label{ineq_sup}
\sup_{D_i^\epsilon} p^\epsilon \leq \frac{|D_i^\epsilon|}{2\pi},
\end{equation}
and
\begin{equation}\label{ineq_int}
\int_{D_i^\epsilon} p^\epsilon(x) dx \leq \frac{|D_i^\epsilon|^2}{4\pi}.
\end{equation}
Moreover, each inequality above achieves equality if and only $D_i^\epsilon$ is either a disk or an annulus.
\end{lemma}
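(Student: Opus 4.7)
The plan is to prove both inequalities by a Talenti-type rearrangement argument applied to the torsion-type boundary value problem on each component $D_i^\epsilon$ separately. Two structural facts will be used throughout: since $\Delta p^\epsilon=-2\leq 0$ and the Dirichlet data are nonnegative constants on each connected component of $\partial D_i^\epsilon$, the minimum principle gives $p^\epsilon\geq 0$ throughout $D_i^\epsilon$; and the fact that the boundary data are piecewise constant makes the super-level sets $\{p^\epsilon>t\}$ well-suited to co-area and isoperimetric analysis.

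For the simply-connected case $i=n+1,\dots,n+m$, where $p^\epsilon$ vanishes on the entire boundary $\partial D_i^\epsilon$, the classical Saint-Venant/Talenti comparison applies directly. Let $B$ be a disk with $|B|=|D_i^\epsilon|$ and radius $R=\sqrt{|D_i^\epsilon|/\pi}$, and let $q$ solve $\Delta q=-2$ in $B$ with $q|_{\partial B}=0$, so that $q(x)=(R^2-|x|^2)/2$. Talenti's inequality yields that the symmetric decreasing rearrangement $(p^\epsilon)^*$ is dominated pointwise by $q$ on $B$, from which the bounds $\sup p^\epsilon\leq q(0)=|D_i^\epsilon|/(2\pi)$ and $\int_{D_i^\epsilon}p^\epsilon\leq \int_B q=|D_i^\epsilon|^2/(4\pi)$ follow at once, with the rigidity in the underlying isoperimetric inequality forcing equality only when $D_i^\epsilon$ is itself a disk.

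The main obstacle is the doubly-connected case $i=1,\dots,n$, where $p^\epsilon$ takes the nonzero constant value $c_i^\epsilon>0$ on the inner boundary $\Gamma_i$ and where the flux condition \eqref{bdry_loop2} must play an essential role. The strategy is to symmetrize $D_i^\epsilon$ to a concentric annulus $A_i^\epsilon$ of the same total area $|D_i^\epsilon|$ whose inner hole has area $|U_i|$, and to compare $p^\epsilon$ with the radial solution $\tilde q$ of the analogous boundary value problem on $A_i^\epsilon$ (with $\tilde q=0$ on the outer circle and a constant value on the inner circle chosen to match the analog of \eqref{bdry_loop2}). The core step is to apply the co-area formula and the sharp isoperimetric inequality to each super-level set $\{p^\epsilon>t\}$, and to combine the integrated identity $\int_{\{p^\epsilon>t\}}\Delta p^\epsilon=-2|\{p^\epsilon>t\}|$ with the prescribed flux across $\Gamma_i$ from \eqref{bdry_loop2}, so as to derive a differential inequality for the distribution function $\mu(t):=|\{p^\epsilon>t\}|$ that is saturated by $\tilde q$ on $A_i^\epsilon$. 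Integrating this ODE comparison produces both \eqref{ineq_sup} and \eqref{ineq_int}, and the rigidity in the isoperimetric step at each level forces $D_i^\epsilon$ itself to be an annulus in the equality case.
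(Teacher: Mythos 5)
Your proposal is correct in outline and follows essentially the same route as the source: the present paper does not reprove this lemma but imports it from Proposition~2.6 of \cite{GomezSerrano-Park-Shi-Yao:radial-symmetry-stationary-solutions}, where the argument is precisely the Talenti-type rearrangement you describe --- classical Saint--Venant/Talenti comparison with the ball in the simply-connected case, and a co-area/isoperimetric differential inequality for the distribution function in the doubly-connected case, with the flux condition \eqref{bdry_loop2} supplying the extra boundary term $2|U_i|$ that makes the concentric annulus the extremal configuration. No substantive gap to flag.
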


Note that the inequalities \eqref{ineq_sup}--\eqref{ineq_int} hold for any domain with $C^{1,\alpha}$ boundary. Even though the inequalities are strict when $D_i^\epsilon$ is non-radial, they are not strong enough to rule out non-radial vortex sheets, as we need quantitative versions of strict inequalities that are still valid in the $\epsilon\to  0^+$ limit. As we will see in the proof of Proposition~\ref{prop2_i}, the key step is to show that if some $\Gamma_i$ is either not a circle or does not have a constant $\gamma_i$, then the following quantitative version of \eqref{ineq_int} holds: $\epsilon^{-2}\left(\frac{|D_i^\epsilon|^2}{4\pi}-\int_{D_i^\epsilon} p^\epsilon(x) dx\right)\geq c_0>0$, where $c_0$ is independent of $\epsilon$.

In order to upgrade \eqref{ineq_int} into a quantitative version, we need to obtain some fine estimates for $p^\epsilon$ that take into account the shape of the thin domains $D^\epsilon_i$. For $i=n+1,\dots,n+m$, since $p^\epsilon=0$ on $\partial D_i^\epsilon$, and the domain $D_i^\epsilon$ is a thin simply-connected domain with width $\epsilon\ll 1$, intuitively one would expect that $| p^\epsilon| \leq C\epsilon^2$. The next proposition shows that this crude estimate is indeed true, and its proof is postponed to Section~\ref{subsec_proof_p}.

\begin{proposition}\label{prop_p_curve}
For any $i=n+1,\dots,n+m$, let $p^\epsilon:\overline{D_i^\epsilon}\to \mathbb{R}$ be given by \eqref{def_p_laplacian}--\eqref{bdry_curve}. Then there exist $\epsilon_1$ and $C$ only depending on $\rVert z_i\rVert_{C^{2}(S_i)},\rVert \gamma_i \rVert_{L^\infty(S_i)}$ and $F_\Gamma$, such that 
\[
|p^\epsilon|\leq C\epsilon^2 \quad\text{ in }D_i^\epsilon
\]
for all $\epsilon \in (0,\epsilon_1)$.
\end{proposition}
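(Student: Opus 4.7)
The lower bound $p^\epsilon \geq 0$ is immediate from $-\Delta p^\epsilon = 2 > 0$, the Dirichlet condition \eqref{bdry_curve}, and the maximum principle, so I would focus on the upper bound. The plan is to build an explicit super-solution from the distance function to a smooth extension of $\Gamma_i$, exploiting the fact that $D_i^\epsilon$ lies inside a tubular strip of width at most $\epsilon\|\gamma_i\|_{L^\infty(S_i)}$.

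Since $\Gamma_i$ is an open $C^2$ arc, I would first extend $z_i$ to a $C^2$ parametrization $\tilde z_i$ on a slightly enlarged interval, yielding a smooth curve $\tilde\Gamma_i \supset \Gamma_i$ whose curvature and reach are controlled only in terms of $\|z_i\|_{C^2(S_i)}$ and $F_\Gamma$. Writing $d(x):=\mathrm{dist}(x,\tilde\Gamma_i)$, on a tubular neighbourhood $U$ of $\tilde\Gamma_i$ of some fixed width $r_0>0$ the function $d$ is $C^2$ with $|\nabla d|\equiv 1$ and $|\Delta d|\leq C_\Gamma$. Setting $h:=\epsilon\|\gamma_i\|_{L^\infty(S_i)}$, the formula \eqref{def_r_eps} places every point of $D_i^\epsilon$ within distance $h$ of $\Gamma_i\subset\tilde\Gamma_i$, so $D_i^\epsilon\subset U$ as soon as $h<r_0$.

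Next I would set
\[
\Phi(x) := 2\bigl(h^2 - d(x)^2\bigr),\qquad x\in\overline{D_i^\epsilon},
\]
and verify by direct computation that $-\Delta\Phi = 4|\nabla d|^2 + 4d\,\Delta d = 4 + 4d\,\Delta d \geq 4 - 4hC_\Gamma \geq 2$ for all $\epsilon$ sufficiently small, while $d\leq h$ on $\partial D_i^\epsilon$ gives $\Phi \geq 0 = p^\epsilon$ there. Since $p^\epsilon - \Phi$ is then subharmonic on $D_i^\epsilon$ and non-positive on $\partial D_i^\epsilon$, the comparison principle yields $p^\epsilon \leq \Phi \leq 2h^2 \leq C\epsilon^2$, as required.

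The step I expect to be most delicate is the behaviour of the distance function at the endpoints $z_i(0)$ and $z_i(1)$: the two boundary arcs of $D_i^\epsilon$ meet there (since $\gamma_i$ vanishes at the endpoints by \textup{(H3)}) and $D_i^\epsilon$ develops cusps. For the open curve $\Gamma_i$ itself the distance function has a ridge that may cross $D_i^\epsilon$ and destroy the $C^2$ regularity needed for the barrier, so a naive $\mathrm{dist}(\cdot,\Gamma_i)$-based barrier fails. Extending to $\tilde\Gamma_i$ is exactly what rescues the argument, replacing an ill-behaved distance function with a smooth one on a uniform tubular neighbourhood. A routine but necessary check is that this extension can be performed with $r_0$ and $C_\Gamma$ depending only on the quantities listed in the statement of the proposition.
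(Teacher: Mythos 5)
Your argument is correct, and the bound it produces ($p^\epsilon\le 2\epsilon^2\|\gamma_i\|_{L^\infty}^2$, together with $p^\epsilon\ge 0$ from the maximum principle) is exactly what is claimed, with constants depending on the right quantities. It is, however, a genuinely different route from the paper's. The paper derives Proposition~\ref{prop_p_curve} from a more general statement (Lemma~\ref{P_maximumprinciple}) proved with a \emph{local} quadratic barrier: one fixes the point $x_0$ where the maximum $M$ is attained, works in the ball $B_{6\epsilon\|\gamma\|_\infty}(z(\alpha_0))$, shows via the arc-chord constant that $D^\epsilon$ is trapped between two lines at height $\pm 2\epsilon\|\gamma\|_\infty$ in local coordinates, and compares $v^\epsilon$ with $b_1(x_1,x_2)=x_2^2-\tfrac{x_1^2}{2}$ to force the maximum of $v^\epsilon+b_1$ onto $\partial D^\epsilon$, yielding $M\le 36\|\gamma\|_\infty^2\epsilon^2$. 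Your \emph{global} barrier $\Phi=2(h^2-d^2)$ built from the squared distance to an extended curve is cleaner for the sup bound and avoids the case analysis on $\partial V_1^\epsilon$ versus $\partial V_2^\epsilon$; what the paper's local construction buys in exchange is that the same framework, with the tilted barrier $b_2$ in \eqref{def_b2} and Hopf's lemma, also delivers the boundary gradient estimate \eqref{P_bound2}, which is essential later (Corollary~\ref{P_maximumprinciple2} and Proposition~\ref{P_pdecomposition} for the closed curves) and which your distance-function barrier does not directly provide. Since Proposition~\ref{prop_p_curve} only concerns the open curves and only the sup bound, your approach is a legitimate self-contained proof of this particular statement.

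Two technical points to tighten. First, the unsigned distance $d$ to $\tilde\Gamma_i$ is not $C^1$ \emph{on} $\tilde\Gamma_i$ (and $\Gamma_i\subset\partial D_i^\epsilon$ touches it), so you should phrase the computation in terms of $d^2$, which \emph{is} $C^2$ on a full tubular neighbourhood of a $C^2$ arc, or in terms of the signed distance $\rho$ with $\Delta(\rho^2)=2+2\rho\Delta\rho$. Second, across the normal lines through the endpoints of the \emph{extended} arc $\tilde\Gamma_i$, $d^2$ is only $C^{1,1}$ (the Laplacian jumps from $2+O(d)$ to $4$); either check, as you can for small $\epsilon$, that the nearest-point projection of $D_i^\epsilon$ lands in the relative interior of $\tilde\Gamma_i$ so this region is never met, or note that $-\Delta\Phi\ge 2$ holds almost everywhere for a $W^{2,\infty}$ function with no singular part, so the comparison principle still applies. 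Neither issue is a gap, but both deserve a sentence.
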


For $i=1,\dots, n$, the estimate is more involved, since $p^\epsilon$ takes different values $c^\epsilon_i$ and $0$ on the inner and outer boundaries of $D^\epsilon_i$. Heuristically speaking, since $D^\epsilon_i$ is a doubly-connected thin tubular domain with width $\sim\epsilon$, we would expect that $p^\epsilon_i$ (in $\alpha,\eta$ coordinate) changes almost linearly  from $0$ to $c^\epsilon_i$ as $\eta$ goes from $-1$ (outer boundary) to $0$ (inner boundary). Next we will show that the error between $p^\epsilon(R^\epsilon_i(\alpha,\eta))$ and the linear-in-$\eta$ function $c^\epsilon_i(1+\eta)$ is indeed controlled by $O(\epsilon^2)$. We will also obtain fine estimates of the gradient of the function $c^\epsilon_i(1+\eta)$, as well as the boundary value $c^\epsilon_i$. Again, its proof is postponed to Section~\ref{subsec_proof_p}.

\begin{proposition}\label{P_pdecomposition}
For any $i=1,\dots,n$, let $p^\epsilon:\overline{D_i^\epsilon}\to \mathbb{R}$ and $c^\epsilon_i \in \mathbb{R}$ be given by \eqref{def_p_laplacian} and \eqref{bdry_loop}--\eqref{bdry_loop2}. For such $p^{\epsilon}$, let us define $\tilde{p}^{\epsilon}$,$q^{\epsilon}:\overline{D_i^\epsilon}\mapsto \R$ as follows:
\begin{align}
\tilde{p}^{\epsilon}(R^\epsilon_i(\alpha,\eta)) &:=  c^\epsilon_i(1+\eta) \quad\text{ for }\alpha \in S_i, \eta\in[0,-1], \label{P_ptildedef}\\
q^{\epsilon} &:= p^{\epsilon} - \tilde{p}^{\epsilon}\quad\quad\text{ in }\overline{D^\epsilon_i}. \nonumber
\end{align}
Also let \begin{equation}\label{P_betadef2}
\beta_i := \frac{2|U_i|}{L_i\int_{S_i}\gamma_i^{-1}(\alpha)d\alpha}.
\end{equation}
Then there exist $\epsilon_1$ and $C$ only depending on $\rVert z_i\rVert_{C^{3}(S_i)},\rVert \gamma_i \rVert_{C^2(S_i)}$ and $F_\Gamma$, such that for all $\epsilon\in(0,\epsilon_1)$ we have the following:
\begin{align}
&\begin{cases}
|q^{\epsilon}|\le C \epsilon^2 & \text{ in }D^{\epsilon}_i,\\
q^{\epsilon} = 0 & \text{ on }\partial D^{\epsilon}_i,
\end{cases} \label{P_claim16}\\[0.1cm]
&\left|\frac{c^\epsilon_i}{\epsilon}-\beta_i\right| \leq C\epsilon, \label{c_beta}
\\[0.1cm]
&\left| \nabla \tilde{p}^{\epsilon}(R^\epsilon_i(\alpha,\eta))- \frac{\beta_i}{\gamma_i(\alpha)}\mathbf{n}(z_i(\alpha))\right| \le C\epsilon \quad\text{ for }\alpha \in S_i, \eta\in[0,-1]. \label{P_claim15}
\end{align}
\end{proposition}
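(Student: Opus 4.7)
The plan is to work in the $(\alpha,\eta)$ coordinates induced by the diffeomorphism $R_i^\epsilon$, where $\tilde p^\epsilon$ has the trivial expression $c_i^\epsilon(1+\eta)$. Inverting the Jacobian from \eqref{eq_partial} in the orthonormal frame $(\mathbf{s},\mathbf{n})$ yields
\[
\nabla\alpha = \frac{\mathbf{s}(z_i(\alpha))}{L_i}+O(\epsilon),\qquad \nabla\eta = \frac{\mathbf{n}(z_i(\alpha))}{\epsilon\gamma_i(\alpha)}+O(1),
\]
and a direct computation using the determinant formula \eqref{eq_det} gives $\Delta\eta = -\kappa_i(\alpha)/(\epsilon\gamma_i(\alpha))+O(1)$. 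Hence $\Delta\tilde p^\epsilon = c_i^\epsilon\Delta\eta = O(c_i^\epsilon/\epsilon)$. The maximum principle together with \eqref{ineq_sup} and the area estimate $|D_i^\epsilon|=O(\epsilon)$ from Remark~\ref{rmk_area} yields the a priori bound $c_i^\epsilon\leq C\epsilon$, so $\Delta\tilde p^\epsilon = O(1)$ and $\Delta q^\epsilon = -2-\Delta\tilde p^\epsilon$ is uniformly bounded, while $q^\epsilon$ vanishes on $\partial D_i^\epsilon$ by construction.

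The bound \eqref{P_claim16} then follows from a thin-domain barrier argument. Take $\phi(x):=-A\epsilon^2\eta(x)(1+\eta(x))\geq 0$, which vanishes on $\partial D_i^\epsilon$. Since $\phi$ depends only on $\eta$, the gradient identities above give
\[
\Delta\phi = \phi''(\eta)|\nabla\eta|^2+\phi'(\eta)\Delta\eta = -\frac{2A}{\gamma_i^2(\alpha)}+O(\epsilon).
\]
For $A$ sufficiently large (depending on $\|\gamma_i\|_{C^2}$, $\min\gamma_i$, and the $L^\infty$-bound of $\Delta q^\epsilon$) the functions $\phi\pm q^\epsilon$ are superharmonic and non-negative on $\partial D_i^\epsilon$, so by the maximum principle $|q^\epsilon|\leq\phi\leq C\epsilon^2$. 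Claim \eqref{P_claim15} is then a direct consequence of \eqref{c_beta} and the gradient identity: $\nabla\tilde p^\epsilon = c_i^\epsilon\nabla\eta = (c_i^\epsilon/\epsilon)\gamma_i^{-1}\mathbf{n}+c_i^\epsilon\cdot O(1) = \beta_i\gamma_i^{-1}\mathbf{n}+O(\epsilon)$.

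The heart of the argument, and the main obstacle, is \eqref{c_beta}. I would pull back the flux condition \eqref{bdry_loop2} to the $(\alpha,\eta)$ plane: setting $P^\epsilon:=p^\epsilon\circ R_i^\epsilon$, on $\Gamma_i$ one has $\nabla p^\epsilon\cdot\mathbf{n} = (\epsilon\gamma_i)^{-1}\partial_\eta P^\epsilon|_{\eta=0}$, so writing $P^\epsilon = c_i^\epsilon(1+\eta)+Q^\epsilon$ with $Q^\epsilon = 0$ on $\{\eta=0\}\cup\{\eta=-1\}$ the flux condition reads
\[
c_i^\epsilon\int_{S_i}\gamma_i^{-1}(\alpha)\,d\alpha + \int_{S_i}\partial_\eta Q^\epsilon(\alpha,0)\,\gamma_i^{-1}(\alpha)\,d\alpha = \frac{2\epsilon|U_i|}{L_i}.
\]
Hence \eqref{c_beta} reduces to showing $\int_{S_i}\partial_\eta Q^\epsilon(\alpha,0)\,\gamma_i^{-1}\,d\alpha = O(\epsilon^2)$. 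The Laplace--Beltrami equation for $Q^\epsilon$ has the schematic form $\epsilon^{-2}\gamma_i^{-2}\partial_\eta^2 Q^\epsilon = F(\alpha,\eta)+(\text{lower-order})$ with $F$ and the lower-order terms uniformly $O(1)$ thanks to \eqref{P_claim16}, so formally $\partial_\eta^2 Q^\epsilon = O(\epsilon^2)$; integrating twice against the vanishing end-point data in $\eta$ yields $|\partial_\eta Q^\epsilon|=O(\epsilon^2)$ pointwise. Making this pointwise normal-derivative bound fully rigorous is the delicate part, since one must genuinely exploit the anisotropy of $D_i^\epsilon$. The cleanest route I envisage is to refine the ansatz by adding an $O(\epsilon^2)\eta(1+\eta)$ quadratic correction tailored to absorb the $-2$ source down to $O(\epsilon)$: the new residual then has $O(\epsilon^3)$ sup norm by the same barrier argument and hence an $O(\epsilon^2)$ normal derivative on $\Gamma_i$, which delivers \eqref{c_beta} via the displayed identity.
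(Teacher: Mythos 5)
Your overall strategy coincides with the paper's: the same decomposition $p^\epsilon=\tilde p^\epsilon+q^\epsilon$, the a priori bound $c_i^\epsilon/\epsilon\le C$ from \eqref{ineq_sup} and \eqref{P_nonzero}, a barrier argument in the thin domain for $|q^\epsilon|\le C\epsilon^2$, and the flux condition \eqref{bdry_loop2} to identify $c_i^\epsilon/\epsilon$ with $\beta_i$. Your global curvilinear barrier $\phi=-A\epsilon^2\eta(1+\eta)$ is a clean substitute for the paper's local Cartesian barriers, and your computations of $\nabla\eta$, $\Delta\eta$, $\Delta\tilde p^\epsilon$ and the pulled-back flux identity are correct.

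The gap is in \eqref{c_beta}, and you have correctly located it but not closed it. Your reduction to $\int_{S_i}\partial_\eta Q^\epsilon(\alpha,0)\,\gamma_i^{-1}\,d\alpha=O(\epsilon^2)$ is right, but neither of the two arguments you offer is complete: the ``integrate $\partial_\eta^2Q^\epsilon=O(\epsilon^2)$ twice'' heuristic needs $O(1)$ bounds on $\partial_\alpha^2Q^\epsilon$, $\partial_\alpha\partial_\eta Q^\epsilon$, $\partial_\eta Q^\epsilon$, which do \emph{not} follow from the sup bound \eqref{P_claim16} (interior elliptic estimates degenerate at the boundary of the thin layer); and in the refined-ansatz route the word ``hence'' in ``an $O(\epsilon^3)$ sup norm \dots and hence an $O(\epsilon^2)$ normal derivative on $\Gamma_i$'' hides exactly the missing estimate, since a sup bound alone does not control the normal derivative at the boundary. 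The paper supplies this step with a Hopf-lemma barrier argument (Lemma~\ref{P_maximumprinciple}, Step 3, and Corollary~\ref{P_maximumprinciple2}), which yields $\|\nabla q^\epsilon\|_{L^\infty(\Gamma_i)}\le C\epsilon$; since $\partial_\eta Q^\epsilon(\alpha,0)=\epsilon\gamma_i(\alpha)\,\nabla q^\epsilon\cdot\mathbf{n}$ on $\Gamma_i$, that is precisely your target $O(\epsilon^2)$. The good news is that your own barrier already delivers it with no refined ansatz: $|q^\epsilon|\le\phi$ in $D_i^\epsilon$, both $q^\epsilon$ and $\phi$ vanish on $\Gamma_i$, and $q^\epsilon\in C^1(\overline{D_i^\epsilon})$ because the boundary is $C^2$ for $i\le n$; comparing one-sided difference quotients along the normal therefore gives $|\nabla q^\epsilon|\le|\nabla\phi|=A\epsilon^2|2\eta+1|\,|\nabla\eta|\le C\epsilon$ on $\Gamma_i$. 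Inserting this into your displayed flux identity closes \eqref{c_beta}, and with that step made explicit the proof is complete.
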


\subsection{Proof of the quantitative lemmas for $p^\epsilon$}\label{subsec_proof_p}
In this subsection we aim to prove Propositions~\ref{prop_p_curve} and \ref{P_pdecomposition}. We start with a technical lemma on estimating the solution of Poisson's equation (with zero boundary condition) in the domain $D^\epsilon_i$.
 
\begin{lemma}\label{P_maximumprinciple} For any $i=1,\dots, n+m$, assume $\Gamma_i$ and $\gamma_i$ satisfy \textbf{\textup{(H1)}}--\textbf{\textup{(H3)}}. 
Let $v^\epsilon \in C^2(D^\epsilon_i) \cap C(\overline{D^\epsilon_i})$ solve   the Poisson's equation with zero boundary condition:
\begin{equation}\label{P_poisson}
\begin{cases}
\Delta v^\epsilon = -1 & \text{ in }D^{\epsilon}_i, \\
 v^{\epsilon}  =0 & \text{ on }\partial D^{\epsilon}_i.
\end{cases}
\end{equation}
Then there exist positive constants $\epsilon_0=C(\rVert z_i\rVert_{C^{2}(S_i)},\rVert \gamma_i\rVert_{L^\infty(S_i)},F_\Gamma)$ and $C_1, C_2 = C(\|\gamma_i\|_{L^\infty(S_i)})$, such that for all $\epsilon \in (0,\epsilon_0)$ we have
 \begin{equation}
0 \leq v^\epsilon \leq C_1 \epsilon^2\quad\text{ in } D^\epsilon_i
\label{P_bound1}
\end{equation}
and
\begin{equation}
 \rVert \nabla v^\epsilon\rVert_{L^{\infty}(\Gamma_i)} \le C_2\epsilon \quad \text{ for } i=1, \ldots, n.\label{P_bound2}
\end{equation}
\end{lemma}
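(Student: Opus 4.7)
My plan is to construct a distance-based supersolution $\phi$ that simultaneously yields both \eqref{P_bound1} and \eqref{P_bound2} via the maximum principle. Existence of $v^\epsilon \in C^2(D_i^\epsilon)\cap C(\overline{D_i^\epsilon})$ is standard elliptic theory (for open curves, whose boundary has cusp-like pinch points at $\alpha=0,1$ because $\gamma_i(0)=\gamma_i(1)=0$, one first solves on the interior subdomains $R_i^\epsilon(S_i^\delta\times(-1,0))$ and passes $\delta\to 0^+$). The lower bound in \eqref{P_bound1} is immediate from the maximum principle applied to $-\Delta v^\epsilon = 1\geq 0$ with $v^\epsilon\equiv 0$ on $\partial D_i^\epsilon$.

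For the upper bound, let $\tilde d(x)$ denote the distance from $x$ to $\Gamma_i$ when $i\leq n$, and to a smooth closed extension $\tilde\Gamma_i \supset \Gamma_i$ when $i>n$. For $\epsilon$ sufficiently small (depending on $\|z_i\|_{C^2}$ and $F_\Gamma$, so that $D_i^\epsilon$ lies strictly inside the reach of $\tilde\Gamma_i$), $\tilde d \in C^\infty(D_i^\epsilon)$ with $|\nabla\tilde d|\equiv 1$ and $|\Delta\tilde d|\leq K_0$ for some $K_0=K_0(\|z_i\|_{C^2})$. Set
\[
\phi(x) := \frac{1+K\epsilon}{2}\,\tilde d(x)\bigl(\epsilon\|\gamma_i\|_{L^\infty(S_i)}-\tilde d(x)\bigr),
\]
with $K$ to be chosen. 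Since every $x\in D_i^\epsilon$ satisfies $\tilde d(x) \leq \mathrm{dist}(x,\Gamma_i) \leq \epsilon\|\gamma_i\|_{L^\infty}$, we get $0\leq \phi \leq C_1\epsilon^2$ in $\overline{D_i^\epsilon}$ and $\phi\geq 0 = v^\epsilon$ on $\partial D_i^\epsilon$ (with $\phi\equiv 0$ on the $\Gamma_i$ piece). A direct chain-rule computation gives
\[
\Delta\phi = \frac{1+K\epsilon}{2}\bigl((\epsilon\|\gamma_i\|_{L^\infty}-2\tilde d)\,\Delta\tilde d - 2\bigr) = -(1+K\epsilon) + O(\epsilon),
\]
which is $\leq -1$ once $K$ is chosen large (depending on $K_0$ and $\|\gamma_i\|_{L^\infty}$); the maximum principle then yields $v^\epsilon \leq \phi \leq C_1\epsilon^2$, proving \eqref{P_bound1}.

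For \eqref{P_bound2}, both $v^\epsilon$ and $\phi$ vanish on $\Gamma_i$ with $v^\epsilon \leq \phi$ in a neighborhood, so a Hopf-type one-sided derivative comparison yields $0\leq \partial_{-\mathbf{n}}v^\epsilon \leq \partial_{-\mathbf{n}}\phi$ on $\Gamma_i$, where $-\mathbf{n}$ is the inward normal of $D_i^\epsilon$. Since $v^\epsilon\equiv 0$ on $\Gamma_i$ its tangential derivative also vanishes, and using $\nabla\tilde d|_{\Gamma_i} = -\mathbf{n}$ one gets
\[
|\nabla v^\epsilon(z_i(\alpha))| \leq \partial_{-\mathbf{n}}\phi\bigr|_{\Gamma_i} = \frac{(1+K\epsilon)\,\epsilon\|\gamma_i\|_{L^\infty}}{2} =: C_2\epsilon.
\]
The main technical obstacle is verifying the uniform smoothness and Laplacian bound on $\tilde d$ throughout $D_i^\epsilon$: for closed curves this just needs $\epsilon\|\gamma_i\|_{L^\infty}$ smaller than the reach of $\Gamma_i$, while for open curves one must carefully construct the extension $\tilde\Gamma_i$ (e.g., by attaching short smooth arcs at each endpoint) so that its reach is controlled in terms of $\|z_i\|_{C^2}$ and $F_\Gamma$; all remaining $\epsilon$-dependence in $C_1, C_2$ can be absorbed into the leading constants by shrinking $\epsilon_0$, matching the claimed dependence on $\|\gamma_i\|_{L^\infty}$ alone.
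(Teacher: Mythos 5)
Your argument is correct, and it takes a genuinely different route from the paper. The paper proves \eqref{P_bound1} and \eqref{P_bound2} with two \emph{local} quadratic barriers $b_1=x_2^2-x_1^2/2$ and $b_2=x_2^2+4\epsilon\|\gamma\|_\infty x_2-x_1^2/2$ in rotated coordinates around the maximum point (resp.\ around each $z_i(\alpha)$), after first establishing a geometric ``flatness'' estimate for $D_i^\epsilon\cap B_{6\epsilon\|\gamma\|_\infty}(z_i(\alpha))$ from the arc-chord condition, and then invoking Hopf's lemma for the gradient bound. You instead build a single \emph{global} supersolution $\phi=\tfrac{1+K\epsilon}{2}\tilde d\,(\epsilon\|\gamma_i\|_\infty-\tilde d)$ out of the distance function; since $0\le\tilde d\le\epsilon\|\gamma_i\|_\infty$ on $D_i^\epsilon$ and $\Delta\tilde d=O(\|\kappa\|_\infty)$ inside the reach, one comparison gives $0\le v^\epsilon\le\phi\le C_1\epsilon^2$, and because $\phi$ vanishes identically on $\Gamma_i$ the gradient bound follows from the elementary one-sided difference-quotient comparison $0\le\partial_{-\mathbf n}v^\epsilon\le\partial_{-\mathbf n}\phi$ (you do not even need Hopf's lemma, only $v^\epsilon\in C^1$ up to $\Gamma_i$, which holds for $i\le n$ since $\partial D_i^\epsilon$ is $C^2$ there). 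What your approach buys is economy: one barrier yields both estimates, and the constants $C_1,C_2$ visibly depend only on $\|\gamma_i\|_{L^\infty}$ once $\epsilon_0$ absorbs the curvature and reach constants. What it costs is the regularity of $\tilde d$: you must stay strictly inside the tubular neighborhood of the (extended) curve, whose width is controlled by $\|z_i\|_{C^2}$ \emph{and} $F_\Gamma$, and for the open curves $i>n$ you need the smooth closed extension $\tilde\Gamma_i$ attached on the side opposite to $D_i^\epsilon$ so that $D_i^\epsilon$ lies on one side of $\tilde\Gamma_i$ and $\tilde d$ stays smooth near the pinch points where $\gamma_i\to0$; this is standard but is exactly the technical content that the paper's local-coordinate barriers are designed to avoid. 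I would only ask you to spell out that extension construction (and the resulting lower bound on the reach in terms of $\|z_i\|_{C^2}$ and $F_\Gamma$) to make the proof fully self-contained; everything else is complete.
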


\begin{proof} Throughout the proof, let $i\in \{1,\dots,n+m\}$ be fixed. For notational simplicity, in the rest of the proof we omit the subscript $i$ in $R^\epsilon_i$, $D^\epsilon_i$, $S_i$, $z_i$ and $\gamma_i$.

\textbf{Step 1.} We start with a simple geometric result that $D^\epsilon$ is ``flat'' in a small neighborhood of any $z(\alpha)$.
For any $\alpha\in S$, let $V^\epsilon(\alpha):= D^\epsilon \cap B_{6\epsilon \|\gamma\|_\infty}(z(\alpha))$, where $\|\cdot\|_\infty$ denotes $\|\cdot\|_{L^\infty(S)}$. We will show that any $y\in V^\epsilon(\alpha)$ satisfies 
\begin{equation}\label{claim_alpha}
\big|(z(\alpha)-y)\cdot \mathbf{n}(z(\alpha))\big| \leq 2\epsilon\|\gamma\|_\infty
\end{equation}
for all sufficiently small $\epsilon>0$ (to be quantified in \eqref{eps_bound}). See Figure~\ref{fig_barrier}(a) for an illustration.

Since $y\in V^\epsilon(\alpha)\subset D^\epsilon$, there exist $\beta\in S$ and $\eta \in (-1,0)$ such that $y=R^\epsilon(\beta,\eta) = z(\beta) + \epsilon\gamma(\beta) \mathbf{n}(z(\beta))\eta$. 
It follows that
\begin{equation}
\begin{split}
\label{tube}
 \big|(z(\alpha)-y)\cdot \mathbf{n}(z(\alpha))\big| & \le |(z(\alpha)-z(\beta))\cdot \mathbf{n}(z(\alpha))|+\epsilon\rVert \gamma\rVert_{\infty}\\
&\le \rVert z'' \rVert_{\infty}(\alpha-\beta)^2+\epsilon\rVert \gamma\rVert_{\infty},
\end{split}
\end{equation}
where in the second inequality we used
\begin{equation}\label{eq_diff_temp}
|(z(\alpha)-z(\beta)) - z'(\alpha)(\alpha-\beta)| \leq \|z''\|_{\infty} (\alpha-\beta)^2
\end{equation} and $z'(\alpha)\cdot \mathbf{n}(z(\alpha))=0$. To bound $\alpha-\beta$ on the right hand side of \eqref{tube}, the fact that $y\in B_{6\epsilon\|\gamma\|_\infty}(z(\alpha))$ gives
\begin{equation}\label{eq_diff_temp2}
6\epsilon \rVert \gamma \rVert_{\infty} \ge |z(\alpha)-y| \geq |z(\alpha)-z(\beta)| - \epsilon \gamma(\beta),
\end{equation}
which implies
$
|z(\alpha)-z(\beta)| \leq 7 \epsilon \rVert \gamma \rVert_{\infty}
$. Since the arc-chord constant $F_\Gamma$ given in \eqref{def_arc_chord} is finite, this implies 
\begin{equation}\label{eq_diff_temp3}
|\alpha - \beta| \leq 7 F_\Gamma \rVert \gamma \rVert_{\infty} \epsilon.
\end{equation}  Plugging this into the right hand side of \eqref{tube}, we know \eqref{claim_alpha} holds for all 
\begin{equation}\label{eps_bound}
0<\epsilon \leq (49\|z''\|_\infty F_\Gamma^2 \|\gamma\|_\infty)^{-1}.
\end{equation}

\begin{figure}
\begin{center}
\includegraphics[scale=0.9]{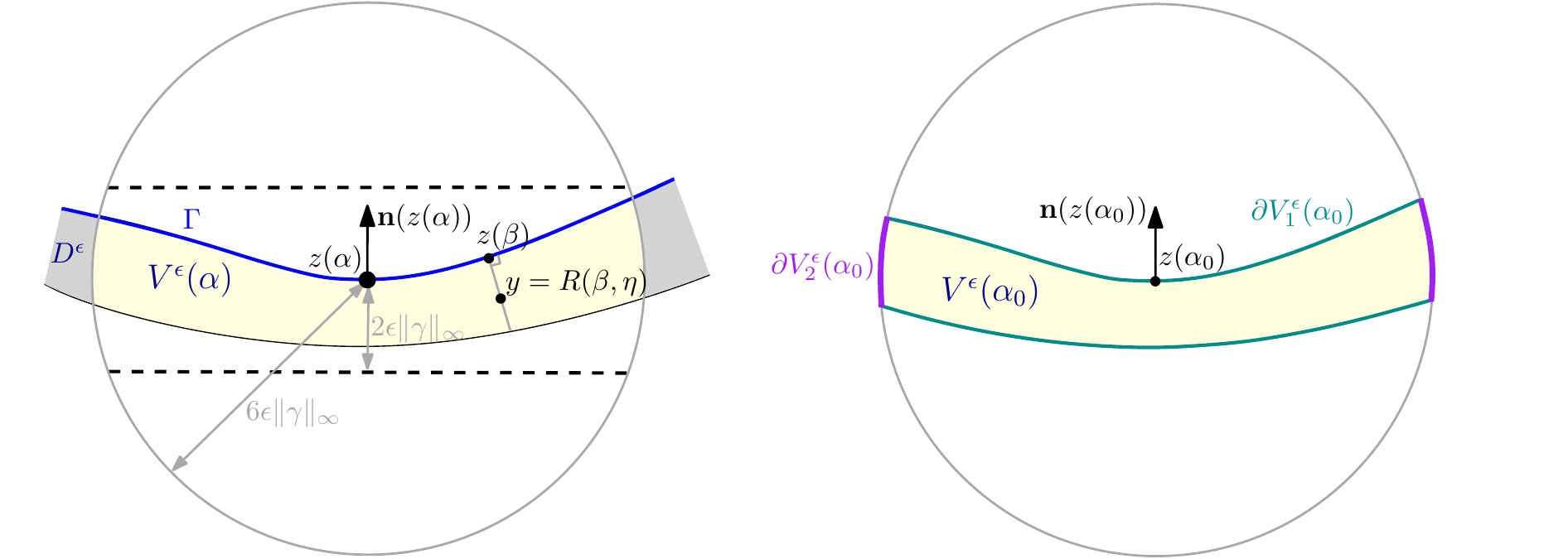}
\hspace*{0.5cm}(a)\hspace{8.3cm}(b)
\caption{(a) In Step 1,  $V^\epsilon(\alpha)$ (the yellow set) must lie between the two dashed lines for small $\epsilon$. (b) In Step 2, $\partial V^\epsilon(\alpha_0)$ is decomposed into $\partial V_1^\epsilon(\alpha_0)$ (in dark green) and $\partial V_2^\epsilon(\alpha_0)$ (in purple). \label{fig_barrier}}
\end{center}
\end{figure}

\textbf{Step 2.} Next we prove \eqref{P_bound1}. Note that $v^\epsilon$ is superharmonic in $D^{\epsilon}$ and vanishes on the boundary, thus   it follows from the maximum principle that $v^\epsilon\geq 0$ in $D^\epsilon$. Denote $M:=\max_{x\in D^{\epsilon}}v(x)$, and pick $x_0=R(\alpha_0,\eta_0) \in D^{\epsilon}$ such that $v(x_0) = M$. 
 Without loss of generality, we can assume that $z(\alpha_0)=(0,0)$ and $\mathbf{s}(z(\alpha_0))=\mathbf{e}_1:=(1,0)$, so that $\mathbf{n}(z(\alpha_0))=(0,1)$ and $x_0 =(0, \epsilon\gamma(\alpha_0)\eta_0)$. Let us consider a barrier function $b_1:\R^2 \mapsto \R$ given by 
\[
b_1(x_1,x_2) = x_2^2 - \frac{x_1^2}{2}.
\]
Clearly $\Delta b_1=1$, so $v^\epsilon+b_1$ is harmonic in $D^\epsilon$. It then follows from the maximum principle that $\max_{\overline{V^\epsilon(\alpha_0)}} (v^\epsilon+b_1)$ is achieved at some boundary point  $\tilde x_0\in \partial V^\epsilon(\alpha_0)$. Let us break $\partial V^\epsilon(\alpha_0)$ into $\partial V^\epsilon_1(\alpha_0) \cup \partial V^\epsilon_2(\alpha_0)$ (see Figure~\ref{fig_barrier}(b) for an illustration), given by 
\begin{equation}\label{bdry_decompose}
\partial V^\epsilon_1(\alpha_0):=\partial D^{\epsilon}\cap B_{6\epsilon\rVert \gamma \rVert_{\infty}}(z(\alpha_0)), \quad \partial V^\epsilon_2(\alpha_0):=\overline{D^{\epsilon}}\cap \partial B_{6\epsilon\rVert \gamma\rVert_{\infty}}(z(\alpha_0)).
 \end{equation}

 We claim that $\tilde{x}_0 \in \partial V^\epsilon_1(\alpha_0)$. To see this, note that any $y=(y_1,y_2)\in \partial V^\epsilon_2(\alpha_0)$ satisfies $|y|=6\epsilon\rVert \gamma \rVert_{\infty}$ and $|y_2|\leq 2\epsilon\rVert \gamma \rVert_{\infty}$, where the latter follows from \eqref{claim_alpha} and our assumption that $\mathbf{s}(z(\alpha_0))= \mathbf{e}_1$. This implies that $|y_1|\geq 4\epsilon\rVert \gamma \rVert_{\infty}>|y_2|$, thus $b_1(y)<0$. Using that $v^\epsilon(x_0) = M \geq v^\epsilon(y)$ and $b_1(x_0)= b_1(0, \epsilon\gamma(\alpha_0)\eta_0)\geq 0$, we have
  $(v^\epsilon+b_1)(y) < (v^\epsilon+b_1)(x_0).$
   This shows that $\max_{\overline{V^\epsilon(\alpha_0)}}(v^\epsilon + b_1) $ cannot be achieved on $\partial  V^\epsilon_2(\alpha_0)$, finishing the proof of the claim.

Since $\tilde{x}_0 \in \partial V^\epsilon_1(\alpha_0) \subset \partial D^\epsilon$, the boundary condition in \eqref{P_poisson} yields that $v^\epsilon(\tilde{x}_0)=0$. Thus
 \begin{align*}
 M+b_1(x_0)= v^\epsilon(x_0)+b_1(x_0) \le v^\epsilon(\tilde{x}_0)+b_1(\tilde{x}_0) = b_1(\tilde{x}_0).
 \end{align*}
 Using  $b_1(x_0)= b_1(0, \epsilon\gamma(\alpha_0)\eta_0)\geq 0$, the above inequality becomes 
 \begin{align}\label{P_case1}
 M\le b_1(\tilde x_0) \leq |\tilde x_0|^2 \leq 36 \rVert \gamma \rVert_{\infty}^2\epsilon^2,
 \end{align}
where the second inequality follows from the definition of $b_1$. This proves \eqref{P_bound1} for $C_1=36 \rVert \gamma \rVert_{\infty}^2$.

\textbf{Step 3.} It remains to prove \eqref{P_bound2}. First note that for $i\in\{1,\dots,n\}$, the assumptions \textbf{(H1)}--\textbf{(H3)} yield that $D_i^\epsilon$ has $C^2$ boundary, therefore $v^\epsilon \in C^2(D_i^\epsilon)\cap C^1(\overline{D_i^\epsilon})$. Let us fix $i\in \left\{ 1, \ldots, n\right\}$ and any $\alpha\in S$, and we aim to show that $|\nabla v^\epsilon(z(\alpha))|\leq C_2 \epsilon$.  Again, without loss of generality we can assume that $z(\alpha)=(0,0)$ and $\mathbf{s}(z(\alpha))=\mathbf{e}_1$. 
Let us consider a new barrier function $b_2:\mathbb{R}^2\to\mathbb{R}$
  \begin{equation}\label{def_b2}
  b_2(x_1,x_2) := x_2^2 + 4\epsilon\rVert \gamma\rVert_{\infty} x_2 - \frac{x_1^2}{2},
  \end{equation}
 which satisfies $b_2(0,0)=0$, and one can easily check that its zero level set has horizontal tangent at $(0,0)$ (thus tangent to $\partial D^\epsilon$ at $z(\alpha)$). 
  
Again, let us decompose $\partial V^\epsilon(\alpha)$  as $\partial V^\epsilon_1(\alpha) \cup \partial V^\epsilon_2(\alpha)$ as in \eqref{bdry_decompose} (except that $\alpha_0$ now becomes $\alpha$). 
We claim that for all sufficiently small $\epsilon>0$, the new barrier function $b_2$ satisfies
\begin{align}
&\Delta b_2 = 1~~~\, \text{ in }V^\epsilon(\alpha), \label{P_claim11}\\
& b_2 \le 0\quad ~~~ \text{ on }\partial V^\epsilon_1, \label{P_claim12}\\
&b_2 \le -\epsilon^2 \quad \text{ on } \partial V^\epsilon_2. \label{P_claim14}
\end{align}

Let us assume for a moment that \eqref{P_claim11}--\eqref{P_claim14} are true. Then it follows that 
\begin{equation}\label{sum_barrier}
v^\epsilon+C_2b_2\le 0 \text{ in } V^\epsilon(\alpha),
\end{equation}
 where $C_2:=\max\left\{1,C_1\right\}$ and $C_1$ is as in \eqref{P_bound1} (in the end of step 2 we have $C_1 = 36 \rVert \gamma \rVert_{\infty}^2$). To show \eqref{sum_barrier}, note that $v^\epsilon+C_2b_2$ is subharmonic in $V^\epsilon(\alpha)$ due to \eqref{P_claim11} and the definition of $C_2$, thus its maximum is attained on its boundary. The boundary conditions in \eqref{P_poisson} and \eqref{P_claim12} yield that $v^\epsilon+C_2 b_2 \le 0$ on $\partial V^\epsilon_1(\alpha)$; whereas \eqref{P_bound1}, \eqref{P_claim14} and the definition of $C_2$ yield that $v^\epsilon+C_2b_2 \le 0$ on $\partial V^\epsilon_2(\alpha)$.  Thus $v^\epsilon+C_2 b_2 \le 0$ on $\partial V^\epsilon_1(\alpha) \cup \partial V^\epsilon_2(\alpha)$, implying \eqref{sum_barrier}.

 However, $v^\epsilon+C_2b_2$ is actually zero at $z(\alpha) \in \partial V^\epsilon(\alpha)$, therefore Hopf's Lemma implies that $\nabla \left( v^\epsilon+C_2b_2\right)(z(\alpha)) \cdot \vec{n}(z(\alpha)) > 0$, where $\vec{n}(z(\alpha))$ is the outer normal of $\partial D^{\epsilon}$ at $z(\alpha)$. Hence  
 \begin{align}\label{P_lowerbdforgradient}
| \nabla v^\epsilon(z(\alpha))|= -\nabla v^\epsilon(z(\alpha))\cdot \vec{n}(z(\alpha)) < C_2\nabla b_2(z(\alpha))\cdot \vec{n}(z(\alpha)) = 4C_2\rVert \gamma \rVert_{\infty}\epsilon,
 \end{align}
 where the first equality follows from the fact that $v^\epsilon$  is superharmonic in $D^{\epsilon}$ and constant on $\partial D^\epsilon$, and the second equality is a direct computation of $\nabla b_2$. Thus \eqref{P_lowerbdforgradient} proves \eqref{P_bound2}.  
 
  To complete the proof, we only need to prove \eqref{P_claim11}--\eqref{P_claim14} for small $\epsilon>0$. Note that \eqref{P_claim11} follows immediately from computing  the  Laplacian of $b_2$.
  For \eqref{P_claim12},  let  us pick $y\in \partial V^\epsilon_1(\alpha)$, and we aim to show that $b_2(y)\leq 0$. Note that $y=R^\epsilon(\beta,0)$ or $R^\epsilon(\beta,-1)$ for some $\beta\in S$.   We first deal with the first case. 

Let us denote $y=(y_1,y_2)$. Rewriting \eqref{eq_diff_temp} into two inequalities for the two components, and using that  $z(\alpha)=(0,0)$ and $z'(\alpha)=L\mathbf{e}_1$ ($L$ is the length of the curve $\Gamma_i$), we have
\begin{align}
& |0-y_1 - L(\alpha-\beta)| \leq \|z''\|_\infty(\alpha-\beta)^2 \label{y1}\\
& |y_2| = |0-y_2| \leq \|z''\|_\infty(\alpha-\beta)^2. \label{y2}
\end{align}
Also, \eqref{eq_diff_temp3} gives $|\alpha-\beta|\leq 7 F_\Gamma \|\gamma\|_\infty\epsilon$. 
  Applying it to \eqref{y1}, for all  $\epsilon>0$ sufficiently small we have that
\begin{equation}\label{y1_2}
|y_1| \geq \frac{L}{2}|\alpha-\beta|.
\end{equation}
Plugging \eqref{y1_2} and \eqref{y2} into $b_2(y) = - \frac{1}{2} y_1^2  + y_2^2 + 4\epsilon\|\gamma\|_\infty y_2 $, we have
  \[
  \begin{split}
  b_2(y) &\leq - \frac{L^2}{8}(\alpha-\beta)^2+ \|z''\|_\infty^2 (\alpha-\beta)^4+ 4\epsilon\|\gamma\|_\infty \|z''\|_\infty(\alpha-\beta)^2   \\
  & \leq \left(-\frac{L^2}{8} + C\epsilon^2 + C\epsilon\right) (\alpha-\beta)^2 \leq 0,
  \end{split}
  \]
  for all $\epsilon>0$ sufficiently small, where the second inequality follows from \eqref{eq_diff_temp3}. This finishes the proof of \eqref{P_claim12} for the case $y=R^\epsilon(\beta,0)$.
  
  Before we deal with the case $y=R^\epsilon(\beta,-1) $, let us prove  \eqref{P_claim14} first. For any $y=(y_1,y_2)\in \partial V^\epsilon_2(\alpha)$, \eqref{claim_alpha} gives $|y_2|\leq 2\epsilon\|\gamma\|_\infty $. Combining this with $|y|=6\epsilon \|\gamma\|_\infty $ yields $|y_1|\geq \sqrt{32}\epsilon\rVert \gamma \rVert_{\infty}$.     Thus
\[  
b_2(y)  \leq (2\epsilon\|\gamma\|_\infty)^2 + 4\epsilon\rVert \gamma\rVert_{\infty} (2\epsilon\|\gamma\|_\infty) - \frac{(\sqrt{32}\epsilon\rVert \gamma \rVert_{\infty})^2}{2} \leq -4\epsilon^2 \|\gamma\|_\infty^2.
 \]
  
 Finally we turn to the proof of \eqref{P_claim12} for the case $y=R^\epsilon(\beta,-1)$. Note that the curve $\{R^\epsilon(\beta,-1):\beta\in S\} \cap B_{6\epsilon \|\gamma\|_\infty}(z(\alpha))$ lies in the interior of the region bounded by $\Gamma \cap B_{6\epsilon \|\gamma\|_\infty}(z(\alpha))$ on the top, $\partial B_{6\epsilon\|\gamma\|_\infty}(z(\alpha))$ on the sides, and $y_2 = -2\epsilon\|\gamma\|_\infty$ on the bottom. (The last one  follows from \eqref{claim_alpha} and our assumption that $\mathbf{s}(z(\alpha))= \mathbf{e}_1$). We have already shown $b_2\leq 0$ on $\Gamma \cap B_{6\epsilon \|\gamma\|_\infty}(z(\alpha))$ and the lateral boundaries, and it is easy to check that $b_2\leq 0$ on $y_2 = -2\epsilon\|\gamma\|_\infty$. Since the set $\{b_2\leq 0\}$ is simply-connected, it implies that $b_2\leq 0$ in the interior of this region, finishing the proof.
\end{proof}

Note that \eqref{P_bound1} of Lemma~\ref{P_maximumprinciple} immediately implies Proposition~\ref{prop_p_curve}. (The only difference is that $\Delta v^\epsilon=-1$ in Lemma~\ref{P_maximumprinciple} whereas $\Delta p^\epsilon=-2$ in Proposition~\ref{prop_p_curve}, so the constant $C$ in Proposition~\ref{prop_p_curve} is twice of that in \eqref{P_bound1}). The lemma also implies the following corollary, which will be helpful in the proof of Proposition~\ref{P_pdecomposition}.

\begin{corollary}\label{P_maximumprinciple2}
For any $i=1,\dots, n+m$, assume $\Gamma_i$ and $\gamma_i$ satisfy \textbf{\textup{(H1)}}--\textbf{\textup{(H3)}}. Assume $v^\epsilon \in C^2(D^\epsilon_i) \cap C(\overline{D^\epsilon_i})$ satisfies that
\[ \begin{cases}
 \left| \Delta v^\epsilon \right| \le C_0  & \text{ in }D^{\epsilon}_i, \\
 v^\epsilon = 0 & \text{ on }\partial D^{\epsilon}_i,
 \end{cases}
\] for some constant  $C_0>0$.
 Then for the same constants $\epsilon_0, C_1, C_2$ as in Lemma~\ref{P_maximumprinciple}, the following holds for all $\epsilon \in (0,\epsilon_0)$:
 \begin{equation}
 |v^\epsilon| \leq C_0 C_1 \epsilon^2\quad\text{ in } D^\epsilon_i,
\label{P_bound4}
\end{equation}
and if $v^\epsilon \in C^2(D^\epsilon_i) \cap C^1(\overline{D^\epsilon_i})$, we have
\begin{equation}
 \rVert \nabla v^\epsilon\rVert_{L^{\infty}(\Gamma_i)} \le C_0 C_2 \epsilon \quad \text{ for }\quad i=1,\ldots,n .\label{P_bound5}
 \end{equation}
\end{corollary}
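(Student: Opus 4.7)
The natural strategy is to use the function from Lemma~\ref{P_maximumprinciple} as a two-sided barrier, exploiting the linearity of the Laplacian. Let $w^\epsilon\in C^2(D_i^\epsilon)\cap C(\overline{D_i^\epsilon})$ denote the solution of \eqref{P_poisson} on $D_i^\epsilon$; then by Lemma~\ref{P_maximumprinciple}, $0\leq w^\epsilon\leq C_1\epsilon^2$ in $D_i^\epsilon$ and, for $i=1,\dots,n$, $\|\nabla w^\epsilon\|_{L^\infty(\Gamma_i)}\leq C_2\epsilon$. The hypothesis $|\Delta v^\epsilon|\leq C_0$ rewrites as $-C_0\leq \Delta v^\epsilon\leq C_0$, so the two auxiliary functions
\[
\Phi^{\pm} := C_0 w^\epsilon \pm v^\epsilon
\]
satisfy $\Delta \Phi^{\pm} = -C_0 \pm \Delta v^\epsilon \leq 0$ in $D_i^\epsilon$, with $\Phi^{\pm}=0$ on $\partial D_i^\epsilon$.

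\textbf{Step 1 (the sup bound \eqref{P_bound4}).} Since each $\Phi^{\pm}$ is superharmonic in $D_i^\epsilon$ with zero boundary data, the weak maximum principle gives $\Phi^{\pm}\geq 0$ in $D_i^\epsilon$, i.e.\ $-C_0 w^\epsilon \leq v^\epsilon\leq C_0 w^\epsilon$. Combining with \eqref{P_bound1} yields $|v^\epsilon|\leq C_0 w^\epsilon\leq C_0 C_1\epsilon^2$ throughout $D_i^\epsilon$. Note that this argument works uniformly in $i$ since only the max principle (valid on arbitrary bounded domains) is used.

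\textbf{Step 2 (the gradient bound \eqref{P_bound5} for $i=1,\dots,n$).} In this case the assumptions \textbf{(H1)}--\textbf{(H3)} make $D_i^\epsilon$ a doubly-connected domain with smooth boundary, and both $v^\epsilon$ and $w^\epsilon$ belong to $C^1(\overline{D_i^\epsilon})$ (the former by assumption, the latter as in the proof of Lemma~\ref{P_maximumprinciple}). Let $\nu$ be the unit normal to $\Gamma_i$ pointing into $D_i^\epsilon$. Since $\Phi^{\pm}\geq 0$ in $D_i^\epsilon$ and vanishes on $\Gamma_i\subset\partial D_i^\epsilon$, one has $\partial_\nu\Phi^{\pm}\geq 0$ on $\Gamma_i$, hence
\[
-C_0\,\partial_\nu w^\epsilon \leq \partial_\nu v^\epsilon \leq C_0\,\partial_\nu w^\epsilon \quad\text{on }\Gamma_i,
\]
so $|\partial_\nu v^\epsilon|\leq C_0|\partial_\nu w^\epsilon|\leq C_0 C_2\epsilon$ by \eqref{P_bound2}. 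Because $v^\epsilon\equiv 0$ on $\Gamma_i$, its tangential derivative vanishes there and $|\nabla v^\epsilon|=|\partial_\nu v^\epsilon|$, which is the desired bound.

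\textbf{Expected main obstacle.} There is no substantial obstacle; the proof is a two-line maximum principle comparison once $w^\epsilon$ is identified as a barrier. The only care needed is to keep track of the sign of $\partial_\nu w^\epsilon$ (which is automatic from $w^\epsilon\geq 0$ and $w^\epsilon|_{\Gamma_i}=0$) and to note that the boundary $\partial D_i^\epsilon$ may fail to be smooth at the endpoints of open curves ($i=n+1,\dots,n+m$), which is precisely why \eqref{P_bound5} is only asserted for closed curves; the interior sup bound \eqref{P_bound4} is immune to this since it relies solely on the weak maximum principle.
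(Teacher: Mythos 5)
Your proof is correct and follows essentially the same route as the paper: the paper's comparison function $\tilde v$ (solving $\Delta\tilde v=-C_0$ with zero boundary data) is exactly your $C_0 w^\epsilon$, and both arguments sandwich $v^\epsilon$ between $\pm C_0 w^\epsilon$ via the maximum principle and then compare normal derivatives on $\Gamma_i$, where all functions vanish. The only difference is presentational (you phrase the boundary-gradient step through the sign of $\partial_\nu\Phi^\pm$, the paper states $|\nabla v^\epsilon|\le|\nabla\tilde v|$ directly), so no further comment is needed.
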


\begin{proof}
Let $\tilde{v}$ be a solution to 
\[\begin{cases}
\Delta \tilde{v} = -C_0 & \text{ in }D^{\epsilon}_i, \\
\tilde{v} = 0 & \text{ on }\partial D^{\epsilon}_i.
\end{cases}
\]
It is clear that $v^\epsilon+\tilde{v}$ is super-harmonic and $v^\epsilon-\tilde{v}$ is sub-harmonic in $D^{\epsilon}_i$, and they both vanish on the boundary. Thus the maximum principle implies that 
\begin{equation}\label{P_bound6}
-\tilde{v} \le v^\epsilon \le \tilde{v}\quad\text{ in }D^\epsilon_i.
\end{equation}
 Applying \eqref{P_bound1} of Lemma~\ref{P_maximumprinciple} to $\frac{\tilde{v}}{C_0}$, we obtain $0\leq \tilde v \leq C_0 C_1\epsilon^2$ in $D^\epsilon_i$ for all $\epsilon \in (0,\epsilon_0)$, leading to \eqref{P_bound4}. Furthermore,  \eqref{P_bound6} and the fact that $v^\epsilon$ and $v$ both have zero boundary condition imply that
\[
 |\nabla v^\epsilon |\le |\nabla \tilde{v}|\quad \text{ on } \partial D^\epsilon_i.
\]
We then apply \eqref{P_bound2} of Lemma~\ref{P_maximumprinciple} to $\frac{\tilde{v}}{C_0}$ and obtain $\rVert\nabla v^\epsilon  \rVert_{L^{\infty}(\Gamma_i)} \leq C_0 C_2\epsilon,$ which proves \eqref{P_bound5}.
\end{proof}


Now we are ready to prove Proposition~\ref{P_pdecomposition}.
\begin{proof}[\textbf{\textup{Proof of Proposition~\ref{P_pdecomposition}}}]
Throughout the proof, let $i\in \{1,\dots,n\}$ be fixed. For notational simplicity, in the rest of the proof we omit the subscript $i$ from all terms.

We claim that
\begin{align}
&\left|\nabla \tilde{p}^{\epsilon}(R^\epsilon(\alpha,\eta)) 
-\frac{c^\epsilon}{\epsilon\gamma(\alpha)}\mathbf{n}(z(\alpha)) \right| \leq C\epsilon \quad\text{ for all }\alpha\in S, \eta \in [0,-1], \label{P_claim21} \\
& \rVert \Delta q^{\epsilon}\rVert_{L^{\infty}(D^{\epsilon})} \le C \label{P_claim22}
\end{align}
for some constant $C>0$ only depending on $\rVert z_i\rVert_{C^{3}(S_i)},\rVert \gamma_i \rVert_{C^2(S_i)}$ and $F_\Gamma$.
Assuming these are true, let us explain how they lead to \eqref{P_claim16}--\eqref{P_claim15}. 
By \eqref{bdry_loop} and \eqref{P_ptildedef}, $p^\epsilon$ and $\tilde p^\epsilon$ have the same boundary condition, thus $q^{\epsilon}=0$ on $\partial D^{\epsilon}$. This and \eqref{P_claim22} allow us to apply Corollary~\ref{P_maximumprinciple2} to $q^\epsilon$ to obtain the estimate \eqref{P_bound4}, implying \eqref{P_claim16}. 

Due to \eqref{P_bound5} of Corollary~\ref{P_maximumprinciple2}, we also have
\begin{equation}\label{q_grad}
\rVert \nabla q^{\epsilon} \rVert_{L^{\infty}( \Gamma)} \leq C\epsilon.
\end{equation}
Using \eqref{bdry_loop2} and $p^\epsilon = \tilde p^\epsilon + q^\epsilon$, we have
\begin{align*}
-2|U| & = \int_{\partial U} \nabla \tilde{p}^{\epsilon}\cdot {n}d\sigma + \int_{\partial U}\nabla q^{\epsilon}\cdot nd\sigma \\
& = -\frac{c^{\epsilon}L}{\epsilon}\int_{S}\gamma^{-1}(\alpha)d\alpha + O(\epsilon),
\end{align*}
where the second equality follows from \eqref{P_claim21} for $\eta=0$, $n(z(\alpha))=-\mathbf{n}(z(\alpha))$ and $d\sigma = L d\alpha$, as well as \eqref{q_grad}. Rearranging the terms and using the definition of $\beta$ in \eqref{P_betadef2} yields \eqref{c_beta}.


Finally, note that \eqref{c_beta} and \eqref{P_claim21} directly lead to \eqref{P_claim15}, where we are using the fact that $\gamma_i$ is uniformly positive for $i=1,\dots,n$, due to \textbf{(H3)}.

The rest of the proof is devoted to proving the claims \eqref{P_claim21} and \eqref{P_claim22}. For \eqref{P_claim21}, we compute the gradient of $\tilde{p}^{\epsilon}$. Differentiating \eqref{P_ptildedef} with respect to $\alpha$ and $\eta$, we obtain
\begin{align}\label{P_derivative}
(\nabla_{\alpha,\eta}R^\epsilon(\alpha,\eta))^{t}\nabla \tilde{p}(R^\epsilon(\alpha,\eta)) = 
\begin{pmatrix}
0 \\
c^{\epsilon}
\end{pmatrix},
\end{align}
where $(\nabla_{\alpha,\eta}R^\epsilon)^{t}$ denotes the transpose of the Jacobian matrix of $R^\epsilon$.  Since $\nabla_{\alpha,\eta}R^\epsilon = ( \partial_\alpha R^\epsilon,  \partial_\eta R^\epsilon)$, using the formula for inverses of $2\times 2$ matrices, we have
\begin{equation}\label{P_inverseofjacobian}
\left(\left(\nabla_{\alpha,\eta}R^\epsilon\right)^{t}\right)^{-1} = \frac{1}{J(\alpha,\eta)}\big(\!-\!(\partial_\eta R^\epsilon)^{\perp},(\partial_\alpha R^\epsilon)^{\perp}\big). 
 \end{equation}
where $J(\alpha,\eta) := \text{det}(\nabla_{\alpha,\eta}R^\epsilon)$.
Multiplying the inverse matrix on both sides of \eqref{P_derivative}, we have
\begin{equation}\label{P_gradient}
\nabla \tilde{p}^{\epsilon}(R^\epsilon(\alpha,\eta)) = 
\frac{1}{J}
\big(\!-\!(\partial_\eta R^\epsilon)^{\perp},(\partial_\alpha R^\epsilon)^{\perp}\big)
\begin{pmatrix}
0\\
c^{\epsilon}
\end{pmatrix}
=\frac{c^{\epsilon}}{J} (\partial_\alpha R^\epsilon)^{\perp}.
\end{equation}
Recall that Lemma~\ref{P_derivatives1} gives $(\partial_\alpha R^\epsilon)^{\perp} = z'(\alpha)^\perp+O(\epsilon) = L\mathbf{n}(z(\alpha))+O(\epsilon)$, and $J= \epsilon L \gamma + O(\epsilon^2)$. Plugging these into 
\eqref{P_gradient} gives 
\begin{equation}\label{p_grad_2}
\nabla \tilde{p}^{\epsilon}(R(\alpha,\eta))=\frac{c^\epsilon}{\epsilon}\left(\frac{\mathbf{n}(z(\alpha))}{\gamma} + O(\epsilon)\right).
\end{equation}
 Note that it follows from \eqref{ineq_sup} that $c^{\epsilon} \le \frac{|D^{\epsilon}|}{2\pi}$, where $|D^\epsilon|\leq C\epsilon$ due to \eqref{P_nonzero}. These imply 
 \begin{equation}\label{bd_ce}
 \frac{c_\epsilon}{\epsilon}\leq C,
 \end{equation} and applying it to \eqref{p_grad_2} yields \eqref{P_claim21}.

To prove \eqref{P_claim22}, since $q^\epsilon=p^\epsilon-\tilde p^\epsilon$ and $\Delta p^\epsilon=-2$ in $D^\epsilon$, it suffices to show that
\begin{equation}\label{P_claim23}
\left |\Delta \tilde{p}^{\epsilon} \right|\le C\quad\text{ in } D^\epsilon,
\end{equation}
and we will begin with an explicit computation of $\partial_{x_1x_1} \tilde p^\epsilon$ and $\partial_{x_2x_2} \tilde p^\epsilon$. Let us denote $R^\epsilon =: (R^1, R^2)$. For notational simplicity, in the rest of the proof we will use subscripts on $R^\epsilon$, $R^1$, $R^2$ and $J$ to denote their partial derivative, e.g.  $ R^1_\alpha := \partial_\alpha R^1$.

 From \eqref{P_gradient}, it follows that 
\[\partial_{x_1}\tilde{p}^\epsilon(R^\epsilon(\alpha,\eta)) = -\frac{c^{\epsilon}}{J} R^2_\alpha.
\]
Differentiating in $\alpha$ and $\eta$, we get
\[
\begin{split}\nabla\left(\partial_{x_1}\tilde{p}^{\epsilon}\right)(R^\epsilon(\alpha,\eta)) & = \left( \left( \nabla_{\alpha,\eta}R^\epsilon\right)^t\right)^{-1}\nabla_{\alpha,\eta}\left(-\frac{c^{\epsilon}}{J}R^2_\alpha\right)\\
& = \frac{c^{\epsilon}}{J}\begin{pmatrix} R_\eta^2 & -R_\alpha^2\\[0.1cm] -R_\eta^1 & R_\alpha^1\end{pmatrix}
\begin{pmatrix}
\frac{J_\alpha}{J^2} R^2_\alpha - \frac{1}{J}R^2_{\alpha\alpha}\\[0.1cm]
\frac{J_\eta}{J^2} R_\alpha^2-\frac{1}{J}R^{2}_{\alpha\eta}
\end{pmatrix},
\end{split}
\]
thus 
\[
\partial_{x_1x_1} \tilde{p}^{\epsilon}(R(\alpha,\eta)) = \frac{c^\epsilon}{J}\left(\frac{ J_\alpha}{J^2} R^2_\eta  R^2_\alpha - \frac{1}{J} R^2_\eta R^2_{\alpha\alpha}- \frac{J_\eta}{J^2} (R_\alpha^2)^2 + \frac{1}{J}R^2_\alpha R^2_{\alpha\eta}\right).
\] 
Likewise, $\partial_{x_2x_2}\tilde{p}(R(\alpha,\eta))$ takes the same expression except every $R^2$ is changed into $R^1$. Adding them together gives
\begin{equation}\label{delta_p}
\Delta \tilde{p}^{\epsilon}(R(\alpha,\eta)) = \frac{c^\epsilon}{J}\left(\frac{ J_\alpha}{J^2} R^\epsilon_\eta \cdot R^\epsilon_\alpha -\frac{1}{J} R_\eta^\epsilon\cdot R^\epsilon_{\alpha\alpha} - \frac{J_\eta}{J^2} R^\epsilon_\alpha \cdot R^\epsilon_\alpha+ \frac{1}{J} R^\epsilon_\alpha \cdot R^\epsilon_{\alpha\eta} \right).
\end{equation}
Using the explicit  formulae of $R_\alpha, R_\eta$ and $J$ in Lemma~\ref{P_derivatives1}, we directly obtain $|R^\epsilon_\alpha|, |R^\epsilon_{\alpha\alpha}|\leq C$; ~$|R^\epsilon_\eta|, |R^\epsilon_{\alpha\eta}|, |J_\alpha|\leq C\epsilon$; ~$|J_\eta|\leq C\epsilon^2$; ~and $J^{-1}\leq C\epsilon^{-1}$ when $\epsilon$ is sufficiently small, where $C$ depends on $\rVert z_i\rVert_{C^{3}(S_i)}$ and $\rVert \gamma_i \rVert_{C^2(S_i)}$. As a result, all the four terms in the parenthesis of \eqref{delta_p} are bounded by some constant $C$ independent of $\epsilon$. Finally, \eqref{bd_ce} yields $\frac{c_\epsilon}{J}\leq C$ as well, thus  $|\Delta \tilde{p}^{\epsilon}| \le C$, and this proves the second claim \eqref{P_claim22}.
 \end{proof}

\section{Proof of the symmetry result}\label{sec5}
In this section we prove that  a  stationary vortex sheet with positive vorticity must be radially symmetric up to a translation, and a rotating vortex sheet with positive vorticity and angular velocity $\Omega<0$ must be radially symmetric.
The key idea of the proof is to define the integral 
\begin{equation}\label{def_i_eps}
\begin{split}
I^\epsilon &:= \int_{D^\epsilon} \epsilon^{-1} \mathbf{u}^\epsilon \cdot \nabla\left(\omega^\epsilon * \mathcal{N}-\frac{\Omega}{2}|x|^2\right)dx \\&= \int_{D^\epsilon} \epsilon^{-1} (x+\nabla p^\epsilon) \cdot \nabla\left(\omega^\epsilon * \mathcal{N}-\frac{\Omega}{2}|x|^2\right)dx ,
\end{split}
\end{equation}
and compute it in two different ways. 
The motivation of the definition is as follows. As discussed in \cite[Section 2.1]{GomezSerrano-Park-Shi-Yao:radial-symmetry-stationary-solutions}, $I^\epsilon$ can be thought of as a first variation of an ``energy functional'' 
\[
\mathcal{E}[\omega^\epsilon]:=\int \frac{1}{2}\omega^\epsilon (\omega^\epsilon*\mathcal{N})-\frac{\Omega}{2}\omega^\epsilon |x|^2  \,dx
\] when we perturb $\omega^\epsilon$ by a divergence free vector $\mathbf{u}^\epsilon$ in $D^\epsilon$. 
(This functional $ \mathcal{E}$ only serves as our motivation, and will not appear in the proof.) 
 On the one hand, using that $\omega_0$ is stationary in the rotating frame with angular velocity $\Omega$ and $\omega^\epsilon$ is a close approximation of $\omega_0$, we will show in Proposition~\ref{prop1_i} that $I^\epsilon$ is of order $O(\epsilon|\log\epsilon|)$, thus goes to zero as $\epsilon\to 0$. On the other hand,  using the particular $\mathbf{u}^\epsilon$ that we constructed in Section~\ref{subsec_p}, we will prove in Proposition~\ref{prop2_i} that if $\Omega=0$, $I^\epsilon$ is strictly positive independently of $\epsilon$ unless all the vortex sheets are nested circles with constant density; and also prove a similar result in Corollary~\ref{cor2_i} for $\Omega<0$.

\begin{proposition}\label{prop1_i}
Assume $\omega(\cdot,t)=\omega_0(R_{\Omega t}\cdot)$ is a stationary/uniformly-rotating vortex sheet with angular velocity $\Omega\in\mathbb{R}$, where $\omega_0$ satisfies \textbf{\textup{(H1)}}--\textbf{\textup{(H3)}}. Then there exists some $C>0$ only depending on $b$ (as in \textbf{\textup{(H2)}}), $\max_i \|z_i\|_{C^3(S_i)}$, $\max_{i\leq n}\|\gamma_i\|_{C^2(S_i)}$, $\max_{i>n}\|\gamma_i\|_{C^b(S_i)}$, $d_\Gamma$ and $F_\Gamma$, such that $| I^\epsilon | < C \epsilon^b |\log\epsilon|$ for all sufficiently small $\epsilon > 0$. 
\end{proposition}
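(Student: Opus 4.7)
My approach is to rewrite the integrand of $I^\epsilon$ so that the stationary/rotating equations (Lemma~\ref{lemma_br_eq}) and the sharp asymptotics of Section~\ref{sec3} and Section~\ref{subsec_p} can be invoked directly. Since $\mathbf v^\epsilon = \nabla^\perp(\omega^\epsilon * \mathcal N)$, we have $\nabla(\omega^\epsilon * \mathcal N - \tfrac{\Omega}{2}|x|^2) = -(\mathbf v^\epsilon - \Omega x^\perp)^\perp$, and so
\[
I^\epsilon = -\int_{D^\epsilon}\epsilon^{-1}\,\mathbf u^\epsilon \cdot (\mathbf v^\epsilon - \Omega x^\perp)^\perp\,dx.
\]
At $x = R_i^\epsilon(\alpha,\eta)$, Lemma~\ref{lemma_v} together with $\Omega R_i^\epsilon(\alpha,\eta)^\perp = \Omega z_i^\perp(\alpha) + O(\epsilon)$ gives $\mathbf v^\epsilon - \Omega x^\perp = (BR(z_i(\alpha)) - \Omega z_i^\perp(\alpha)) - (\eta+\tfrac12)[\mathbf v](z_i(\alpha)) + O(\epsilon^b|\log\epsilon|)$. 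Lemma~\ref{lemma_br_eq}, combined with $[\mathbf v]\cdot\mathbf n = 0$, forces $BR(z_i) - \Omega z_i^\perp = \tfrac{C_i}{\gamma_i(\alpha)}\mathbf s(z_i(\alpha))$ (with $C_i = 0$ for open curves), and since $[\mathbf v] = \gamma_i\mathbf s$, one obtains
\[
(\mathbf v^\epsilon - \Omega x^\perp)^\perp\big|_{R_i^\epsilon(\alpha,\eta)} = \Big[\tfrac{C_i}{\gamma_i(\alpha)} - (\eta+\tfrac12)\gamma_i(\alpha)\Big]\mathbf n(z_i(\alpha)) + O(\epsilon^b|\log\epsilon|).
\]
The crucial feature is that at leading order the integrand of $I^\epsilon$ depends only on $\mathbf u^\epsilon\cdot\mathbf n(z_i(\alpha))$.

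Changing variables via $R_i^\epsilon$ and using $\epsilon^{-1}\det\nabla_{\alpha,\eta}R_i^\epsilon = L_i\gamma_i(\alpha) + O(\epsilon)$ (Lemma~\ref{P_derivatives1}) reduces $I^\epsilon$ to
\[
-\sum_i L_i\int_{S_i}\!\int_{-1}^0 \big(\mathbf u^\epsilon\cdot\mathbf n(z_i(\alpha))\big)\big[C_i - (\eta+\tfrac12)\gamma_i^2(\alpha)\big]\,d\eta\,d\alpha \;+\; \mathcal E.
\]
I would bound the error $\mathcal E$ (which absorbs both the $O(\epsilon^b|\log\epsilon|)$ from Lemma~\ref{lemma_v} and the $O(\epsilon)$ Jacobian correction) by Cauchy-Schwarz together with the energy estimate $\|\mathbf u^\epsilon\|_{L^2(D^\epsilon)} = O(\epsilon^{1/2})$, which itself follows from testing $\Delta p^\epsilon = -2$ against $p^\epsilon$ and using $\|p^\epsilon\|_{L^\infty} = O(1)$, $|D^\epsilon| = O(\epsilon)$, and $c_i^\epsilon = O(\epsilon)$ from Proposition~\ref{P_pdecomposition}. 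For the main integral I expand $\mathbf u^\epsilon\cdot\mathbf n(z_i) = z_i\cdot\mathbf n(z_i) + \epsilon\gamma_i\eta + \nabla p^\epsilon\cdot\mathbf n(z_i)$ and invoke Propositions~\ref{prop_p_curve}--\ref{P_pdecomposition}: for $i\le n$, $\nabla\tilde p^\epsilon\cdot\mathbf n = \tfrac{\beta_i}{\gamma_i(\alpha)} + O(\epsilon)$, and a chain-rule computation in $(\alpha,\eta)$ gives $\nabla q^\epsilon\cdot\mathbf n(z_i(\alpha)) = \tfrac{1}{\epsilon\gamma_i(\alpha)}\partial_\eta\tilde q^\epsilon + O(\epsilon\,|\partial_\alpha\tilde q^\epsilon|)$, with $\tilde q^\epsilon := q^\epsilon\circ R_i^\epsilon$; the $\partial_\eta\tilde q^\epsilon$ contribution is estimated by integration by parts in $\eta$ against $[C_i - (\eta+\tfrac12)\gamma_i^2]$ using $q^\epsilon|_{\eta\in\{-1,0\}} = 0$, yielding $O(\epsilon)$, while the tangential correction is controlled by the $H^1$ bound $\|\nabla q^\epsilon\|_{L^2(D_i^\epsilon)}^2 = O(\epsilon^3)$. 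For $i > n$ the analogous argument applies directly to $p^\epsilon$, since $p^\epsilon|_{\partial D_i^\epsilon} = 0$.

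The heart of the proof, which I expect to be the main obstacle to make precise, is that after all $\eta$-integrations the only potentially $O(1)$ piece surviving is
\[
-L_iC_i\int_{S_i}\Big(z_i\cdot\mathbf n(z_i(\alpha)) + \frac{\beta_i}{\gamma_i(\alpha)}\Big)d\alpha,
\]
and this vanishes identically: Green's theorem applied to the CCW-parametrized closed curve $\Gamma_i$ yields $\int_{S_i}z_i\cdot\mathbf n(z_i(\alpha))\,d\alpha = -2|U_i|/L_i$, while the definition \eqref{P_betadef2} of $\beta_i$ gives $\int_{S_i}\tfrac{\beta_i}{\gamma_i(\alpha)}\,d\alpha = 2|U_i|/L_i$. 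This cancellation is not a coincidence: the constant $c_i^\epsilon$ in Proposition~\ref{P_pdecomposition} was defined via \eqref{bdry_loop2} precisely to enforce $\int_{\partial D_{i,\text{in}}^\epsilon}\mathbf u^\epsilon\cdot n\,d\sigma = 0$, which is the global manifestation of the identity we need. For open curves, $C_i = 0$ together with $\int_{-1}^0(\eta+\tfrac12)\,d\eta = 0$ eliminates the analogous contribution automatically. Assembling all bounds then yields $|I^\epsilon| \leq C\epsilon^b|\log\epsilon|$.
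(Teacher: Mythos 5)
Your proposal is correct and follows essentially the same route as the paper's proof: the same change of variables via $R_i^\epsilon$, the approximation of $\mathbf v^\epsilon$ from Lemma~\ref{lemma_v}, the stationary equations \eqref{BR1}--\eqref{BR2} to reduce $(\mathbf v^\epsilon-\Omega x^\perp)^\perp$ to $\big[\tfrac{C_i}{\gamma_i}-(\eta+\tfrac12)\gamma_i\big]\mathbf n$, and the two cancellations (the $\int_{-1}^0(\eta+\tfrac12)\,d\eta=0$ identity and the $\int_{S_i}z_i\cdot\mathbf n\,d\alpha=-2|U_i|/L_i=-\int_{S_i}\beta_i\gamma_i^{-1}\,d\alpha$ identity coming from the normalization \eqref{bdry_loop2} of $c_i^\epsilon$). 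The only divergence is bookkeeping: the paper disposes of the $\nabla q^\epsilon$ (and, for open curves, the full $\nabla p^\epsilon$) contribution by one application of the divergence theorem over $D_i^\epsilon$ before changing variables, whereas you integrate by parts in $\eta$ and use $L^2$ energy bounds, which works equally well.
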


\begin{proof}

Let us decompose $I^\epsilon =: \sum_{i=1}^{n+m} I^\epsilon_i$, where $I_i^\epsilon :=  \int_{D^\epsilon_i} \epsilon^{-1} (x+\nabla p^\epsilon) \cdot \nabla(\omega^\epsilon * \mathcal{N}-\frac{\Omega}{2}|x|^2)dx .$

We start with showing that $|I^\epsilon_i|\leq C\epsilon^b |\log\epsilon|$ for $i=n+1,\dots, n+m$. For such $i$, $p^\epsilon=0$ on $\partial D^\epsilon_i$, thus the divergence theorem (and the fact that $\omega^\epsilon = \epsilon^{-1}$ in $D^\epsilon_i$) gives
\[
I^\epsilon_i = \underbrace{\int_{D^\epsilon_i} \epsilon^{-1} x \cdot \nabla\left(\omega^\epsilon*\mathcal{N}-\frac{\Omega}{2}|x|^2\right) dx}_{=:T_i^\epsilon}- \int_{D^\epsilon_i} \epsilon^{-1}(\epsilon^{-1}-2\Omega) p^\epsilon(x) dx.
\]
Using the estimate $|p^\epsilon|\leq C\epsilon^2$ in Proposition~\ref{prop_p_curve} and the fact that $|D^\epsilon_i|\leq C\epsilon$ from \eqref{P_nonzero}, we easily bound the second integral by $C\epsilon$. To control the first integral $T_i^\epsilon$, we rewrite it using the change of variables $x = R^\epsilon_i(\alpha,\eta)$ and the definition  $\mathbf{v}^\epsilon := \nabla^\perp( \omega^\epsilon*\mathcal{N})$ in \eqref{def_v_eps}: (also note that on the right hand side we group $\epsilon^{-1}$ with the determinant)
\[
\begin{split}
T_i^\epsilon = \int_{S_i} \int_{-1}^0  R_i^\epsilon(\alpha,\eta) \cdot \underbrace{\Big(\!-\!(\mathbf{v}^\epsilon)^\perp(R_i^\epsilon(\alpha,\eta))-\Omega R_i^\epsilon(\alpha,\eta)\Big)}_{=:J_i^\epsilon} \underbrace{\epsilon^{-1} \det(\nabla_{\alpha,\eta} R_i^\epsilon(\alpha,\eta))}_{=:K_{i}^\epsilon} \,d\eta d\alpha.
\end{split}
\]
Let us take a closer look at the integrand, which is a product of 3 terms. Clearly, the definition of $R_i^\epsilon$ gives $
R_i^\epsilon(\alpha,\eta) = z_i(\alpha) + O(\epsilon).
$
As for the middle term $J_i^\epsilon$, Lemma~\ref{lemma_v} yields
\begin{equation}\label{J_i_eps}
J_i^\epsilon(\alpha,\eta)= - BR^\perp(z_i(\alpha)) + \left(\eta+\frac{1}{2}\right)[\mathbf{v}]^\perp(z_i(\alpha)) - \Omega z_i(\alpha) + O(\epsilon^b |\log\epsilon|).
\end{equation}
Using the fact that $BR(z_i(\alpha))=\Omega z_i^\perp(\alpha)$ for $i=n+1,\dots,n+m$ (which follows from \eqref{BR1} and \eqref{BR2}), it becomes
\begin{equation}\label{J_i_eps_2}
J_i^\epsilon(\alpha,\eta)= \left(\eta+\frac{1}{2}\right)[\mathbf{v}]^\perp(z_i(\alpha))  + O(\epsilon^b |\log\epsilon|).
\end{equation}
Also it follows from \eqref{eq_det} that
$K_i^\epsilon(\alpha,\eta) = L_i \gamma_i(\alpha) + O(\epsilon).
$ Plugging these three estimates into the above integral gives
\[
T_i^\epsilon= \int_{S_i} \int_{-1}^0 z_i(\alpha) \cdot\left(\eta+\frac{1}{2}\right)[\mathbf{v}]^\perp(z_i(\alpha)) L_i \gamma_i(\alpha) d\eta d\alpha + O(\epsilon^b |\log\epsilon|) = O(\epsilon^b |\log\epsilon|),
\]
where the last step follows from the fact that $\int_{-1}^0 (\eta+\frac{1}{2}) d\eta = 0$. This finishes the proof that $|I^\epsilon_i |\leq C\epsilon^b |\log\epsilon|$ for $i=n+1,\dots,n+m$, where $C$ depends on $b$, $\max_i\|z_i\|_{C^2(S_i)}$, $\max_{i}\|\gamma_i\|_{C^b(S_i)}$,  $d_\Gamma$ and $F_\Gamma$.

In the rest of the proof we aim to show $|I^\epsilon_i|\leq C\epsilon^b |\log\epsilon|$ for $i=1,\dots, n$, which is slightly more involved. Recall that in Proposition~\ref{P_pdecomposition} we defined $\tilde p^\epsilon$ and $q^\epsilon$ in $D_i^\epsilon$ for $i=1,\dots, n$, where they satisfy $p^\epsilon = \tilde p^\epsilon + q^\epsilon$ in $D_i^\epsilon$, and  $q^\epsilon = 0$ on $\partial D_i^\epsilon$. This allows us to apply the divergence theorem (to the $q^\epsilon$ term only) and decompose $I_i^\epsilon$ as
\[
I^\epsilon_i = \int_{D^\epsilon_i} \epsilon^{-1} (x+\nabla \tilde p_\epsilon) \cdot \nabla\left(\omega^\epsilon*\mathcal{N}-\frac{\Omega}{2}|x|^2\right) dx- \int_{D^\epsilon_i} \epsilon^{-1}(\epsilon^{-1}-2\Omega) q^\epsilon(x) dx =: I^\epsilon_{i,1} + I^\epsilon_{i,2}.
\]
We can easily show that $I^\epsilon_{i,2} = O(\epsilon)$: \eqref{P_claim16} of Proposition~\ref{P_pdecomposition} gives $|q^\epsilon|\leq C\epsilon^2$, and combining it with $|D^\epsilon_i|\leq C\epsilon$ in \eqref{P_nonzero} immediately yields the desired estimate.

Next we turn to $I^\epsilon_{i,1}$. Again, the change of variables $x = R^\epsilon_i(\alpha,\eta)$ and the definition  $\mathbf{v}^\epsilon := \nabla^\perp( \omega^\epsilon*\mathcal{N})$ gives
\[
I_{i,1}^\epsilon =  \int_{S_i} \int_{-1}^0  \big(R_i^\epsilon(\alpha,\eta)+\nabla\tilde p^\epsilon(R_i^\epsilon(\alpha,\eta))\big) \cdot \underbrace{\Big(\!-\!(\mathbf{v}^\epsilon)^\perp(R_i^\epsilon(\alpha,\eta))-\Omega R_i^\epsilon(\alpha,\eta)\Big)}_{=:J_i^\epsilon} \underbrace{\epsilon^{-1} \det(\nabla_{\alpha,\eta} R_i^\epsilon(\alpha,\eta))}_{=:K_{i}^\epsilon} \,d\eta d\alpha.
\]
For the three terms in the product of the integrand, we will approximate the first term using the definition of $R_i^\epsilon$ and \eqref{P_claim15} of Proposition~\ref{P_pdecomposition}:
\[
R_i^\epsilon(\alpha,\eta)+\nabla\tilde p^\epsilon(R_i^\epsilon(\alpha,\eta)) = z_i(\alpha) + \frac{\beta_i}{\gamma_i(\alpha)}\mathbf{n}(z_i(\alpha)) + O(\epsilon),
\]
where $\beta_i := \frac{2|U_i|}{L_i\int_{S_i}\gamma_i^{-1}(\alpha)d\alpha}$ is given by \eqref{P_betadef2}. Lemma~\ref{lemma_v} allows us to approximate the middle term $J_i^\epsilon$ as \eqref{J_i_eps},  however \eqref{J_i_eps_2} no longer holds since for $i=1,\dots,n$ we do not have $BR(z_i(\alpha))=\Omega z_i^\perp(\alpha)$. As for $K_i^\epsilon$, we again use \eqref{eq_det} to approximate it by
$K_i^\epsilon(\alpha,\eta) = L_i \gamma_i(\alpha) + O(\epsilon).
$
Plugging these three estimates into the integrand of $I_{i,1}^\epsilon$ gives
\[
I_{i,1}^\epsilon =  \int_{S_i} \left(z_i(\alpha) + \frac{\beta_i}{\gamma_i(\alpha)}\mathbf{n}(z_i(\alpha))\right) \cdot \Big(-\!\!BR^\perp(z_i(\alpha)) - \Omega z_i(\alpha)\Big) L_i\gamma_i(\alpha) d\alpha + O(\epsilon^b |\log\epsilon|),
\]
where we again use the fact that the $(\eta+\frac{1}{2})$ term gives zero contribution since $\int_{-1}^0 (\eta+\frac{1}{2}) d\eta =0$.
Next we will show the integral on the right hand side is in fact 0. Since $\omega$ is a rotating solution with angular velocity $\Omega$, the conditions \eqref{BR1} and \eqref{BR2}  yield that 
\[
-BR^\perp(z_i(\alpha)) - \Omega z_i(\alpha)= C_i \gamma_i^{-1}(\alpha)\mathbf{n}(z_i(\alpha)),
\] for some constant $C_i$. Plugging this into the above integral gives
\[
\begin{split}
I_{i,1}^\epsilon &= C_i L_i \int_{S_i} \left(z_i(\alpha) \cdot \mathbf{n}(z_i(\alpha)) + \frac{\beta_i}{\gamma_i(\alpha)}\right) d\alpha + O(\epsilon^b |\log\epsilon|) \\&
= C_i L_i\left( \int_{S_i} z_i(\alpha) \cdot \mathbf{n}(z_i(\alpha)) d\alpha + \frac{2|U_i|}{L_i}\right) + O(\epsilon^b |\log\epsilon|),
\end{split}
\]
where the second step follows from the definition of $\beta_i$ in \eqref{P_betadef2}. Let us compute the integral on the right hand side by changing to arclength parametrization and applying the divergence theorem:
\[
\int_{S_i} z_i(\alpha) \cdot \mathbf{n}(z_i(\alpha)) d\alpha= -\frac{1}{L_i}\int_{\partial U_i} x \cdot n d\sigma = -\frac{2|U_i|}{L_i},
\]
which yields $I_{i,1}^\epsilon = O(\epsilon^b |\log\epsilon|)$, and finishes the proof that $|I_i^\epsilon|\leq C\epsilon^b |\log\epsilon|$ for $i=1,\dots, n$, where $C$ depends on $b$, $\rVert z_i\rVert_{C^{3}(S_i)},\rVert \gamma_i \rVert_{C^2(S_i)}$, $d_\Gamma$ and $F_\Gamma$.

Finally, summing the $I_i^\epsilon$ estimates for $i=1,\dots, n+m$ gives 
$
|I^\epsilon| \leq C\epsilon^b |\log\epsilon|
$
for all sufficiently small $\epsilon>0$, thus we can conclude.
\end{proof}

Now we will use a different way to compute $I^\epsilon$. Let us first define a new integral $\tilde I^\epsilon$ that is the same as $I^\epsilon$ except with $\Omega$ set to zero:
\begin{equation}\label{def_i_eps_tilde}
\begin{split}
\tilde I^\epsilon &:=\int_{D^\epsilon} \epsilon^{-1} (x+\nabla p^\epsilon) \cdot \nabla\left(\omega^\epsilon * \mathcal{N}\right)dx.
\end{split}
\end{equation}
Next we will prove that $\tilde I^\epsilon$ is strictly positive independently of $\epsilon$ unless all the vortex sheets are nested circles with constant density. As we will see in the proof, the key step is to show that if some $\Gamma_i$ is either not a circle or does not have a constant $\gamma_i$, then the estimates on $p^\epsilon$ in Propositions~\ref{prop_p_curve}--\ref{P_pdecomposition} lead to the following quantitative version of \eqref{ineq_int}: $\epsilon^{-2}\left(\frac{|D_i^\epsilon|^2}{4\pi}-\int_{D_i^\epsilon} p^\epsilon(x) dx\right)\geq c_0>0$, where $c_0$ is independent of $\epsilon$.

\begin{proposition}\label{prop2_i} Let $\tilde I^\epsilon$ be defined as in \eqref{def_i_eps_tilde}. Assume that $\Gamma_i$ and $\gamma_i$ satisfy \textbf{\textup{(H1)}}--\textbf{\textup{(H3)}} for $i=1,\dots,n+m$. Then we have
$
\tilde I^\epsilon\geq 0
$ for all sufficiently small $\epsilon>0$.

In addition, if $\Gamma$ is \textbf{not} a union of nested circles with constant $\gamma_i$'s on each connected component, there exists some $c_0>0$ independent of $\epsilon$, such that $\tilde I^\epsilon > c_0 > 0$ for all sufficiently small $\epsilon > 0$.
\end{proposition}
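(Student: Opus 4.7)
The plan is to decompose $\tilde I^\epsilon$ into pair-interactions between components of $D^\epsilon$ and combine the Talenti-type inequality \eqref{ineq_int} with a sharp isoperimetric--Cauchy--Schwarz bound. Set $\psi^\epsilon_j:=\epsilon^{-1}\mathbf{1}_{D^\epsilon_j}*\mathcal{N}$, so that $\omega^\epsilon*\mathcal{N}=\sum_j\psi^\epsilon_j$, and let $T_{ij}:=\int_{D^\epsilon_i}\epsilon^{-1}\mathbf{u}^\epsilon\cdot\nabla\psi^\epsilon_j\,dx$, so $\tilde I^\epsilon=\sum_{i,j}T_{ij}$. Writing $\nabla\psi^\epsilon_j(x)=\epsilon^{-1}\int_{D^\epsilon_j}\nabla_x\mathcal{N}(x-y)\,dy$ and symmetrizing in $x\leftrightarrow y$ turns the leading piece $(x-y)\cdot\nabla_x\mathcal{N}(x-y)$ into the constant $1/(2\pi)$; the residual $(\nabla p^\epsilon(x)-\nabla p^\epsilon(y))\cdot(x-y)/|x-y|^2$ is reduced using $\Delta_x\log|x-y|=2\pi\delta_{x=y}$ and integration by parts against $p^\epsilon$, invoking $\Delta p^\epsilon=-2$, $\Delta\psi^\epsilon_k=\epsilon^{-1}\mathbf{1}_{D^\epsilon_k}$, and the boundary values $p^\epsilon=0$ on outer and open-curve boundaries, $p^\epsilon=c^\epsilon_i$ on $\Gamma_i$. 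This produces
\[
T_{ii}=\frac{|D^\epsilon_i|^2}{4\pi\epsilon^2}-\frac{1}{\epsilon^2}\int_{D^\epsilon_i}p^\epsilon\,dx,\qquad T_{ij}+T_{ji}=\frac{|D^\epsilon_i||D^\epsilon_j|}{2\pi\epsilon^2}-\frac{c^\epsilon_i|U_i\cap D^\epsilon_j|\mathbf{1}_{i\leq n}+c^\epsilon_j|U_j\cap D^\epsilon_i|\mathbf{1}_{j\leq n}}{\epsilon^2}.
\]

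Each $T_{ii}\geq 0$ by \eqref{ineq_int}. For $i\neq j$, since $D^\epsilon_\ell$ is a thin tube on the outer side of $\Gamma_\ell$ and the $\Gamma_\ell$'s are disjoint, exactly one of three mutually exclusive geometric cases holds for small $\epsilon$: (a) $\Gamma_j\subset U_i$, giving $T_{ij}+T_{ji}=\epsilon^{-2}|D^\epsilon_j|\bigl(\tfrac{|D^\epsilon_i|}{2\pi}-c^\epsilon_i\bigr)$; (b) the symmetric case; or (c) both intersections empty, giving $T_{ij}+T_{ji}=\tfrac{|D^\epsilon_i||D^\epsilon_j|}{2\pi\epsilon^2}\geq 0$. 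In cases (a)--(b), the needed inequality $\tfrac{|D^\epsilon_i|}{2\pi}\geq c^\epsilon_i$ follows to leading order from Remark~\ref{rmk_area} ($|D^\epsilon_i|/\epsilon=L_i\int_{S_i}\gamma_i+O(\epsilon)$) and Proposition~\ref{P_pdecomposition} ($c^\epsilon_i/\epsilon=\beta_i+O(\epsilon)$) combined with the sharp bound
\[
L_i^2\,\Bigl(\int_{S_i}\gamma_i\,d\alpha\Bigr)\Bigl(\int_{S_i}\gamma_i^{-1}\,d\alpha\Bigr)\;\geq\; 4\pi|U_i|,
\]
which is the product of the isoperimetric inequality for $\Gamma_i$ and the Cauchy--Schwarz inequality for $\gamma_i,\gamma_i^{-1}$, with equality precisely when $\Gamma_i$ is a circle \emph{and} $\gamma_i$ is constant. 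Altogether $\tilde I^\epsilon\geq 0$.

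For the quantitative strict inequality when $\Gamma$ is not a union of nested circles with constant $\gamma_i$, I treat three exhaustive scenarios. If $\Gamma$ contains an open curve $\Gamma_k$, Proposition~\ref{prop_p_curve} gives $\|p^\epsilon\|_{L^\infty(D^\epsilon_k)}\leq C\epsilon^2$, so $\int_{D^\epsilon_k}p^\epsilon\leq C\epsilon^3\ll|D^\epsilon_k|^2/(4\pi)\sim\epsilon^2$, and hence $T_{kk}\geq c>0$ uniformly in $\epsilon$. If all components are closed but some $\Gamma_k$ is not a circle with constant $\gamma_k$, the decomposition $p^\epsilon=\tilde p^\epsilon+q^\epsilon$ from Proposition~\ref{P_pdecomposition} together with the change of variables $x=R^\epsilon_k(\alpha,\eta)$, the Jacobian \eqref{eq_det}, and $\int_{-1}^0(1+\eta)\,d\eta=1/2$ yield $\int_{D^\epsilon_k}p^\epsilon\,dx=\tfrac{\epsilon^2}{2}\beta_k L_k\int_{S_k}\gamma_k+O(\epsilon^3)$, so $T_{kk}$ reduces at leading order to $\bigl(\int_{S_k}\gamma_k\bigr)\bigl[\tfrac{L_k^2\int_{S_k}\gamma_k}{4\pi}-\tfrac{|U_k|}{\int_{S_k}\gamma_k^{-1}}\bigr]$, which is bounded below by a positive constant by the \emph{strict} version of the sharp inequality above. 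Finally, if all $\Gamma_i$ are circles with constant $\gamma_i$ but not all nested, some pair falls into case (c) and contributes $T_{ij}+T_{ji}\geq c>0$. The principal difficulty I anticipate is the careful bookkeeping of the double-layer boundary contributions $c^\epsilon_i\int_{\Gamma_i}\partial_{n_x}\log|x-y|\,d\sigma_x$ arising in the derivation of $T_{ij}+T_{ji}$: the divergence theorem applied inside $U_i$ identifies them with $-2\pi c^\epsilon_i\mathbf{1}_{y\in U_i}$, producing the $|U_i\cap D^\epsilon_j|$ terms that underlie both the geometric trichotomy and the exact cancellation $\tfrac{|D^\epsilon_i|}{2\pi}=c^\epsilon_i$ observed for annuli of constant strength.
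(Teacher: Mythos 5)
Your decomposition and computations reproduce the paper's proof almost exactly: your $T_{ii}$ is the paper's $A_i^\epsilon$, your formula for $T_{ij}+T_{ji}$ is what the paper obtains from the divergence theorem together with the trichotomy $j\prec i$, $i\prec j$, or neither, and your three scenarios for the quantitative lower bound (open curve via Proposition~\ref{prop_p_curve}; closed curve that is not a circle with constant strength via Proposition~\ref{P_pdecomposition} plus Cauchy--Schwarz and the isoperimetric inequality; non-nested pair via the $|D_i^\epsilon||D_j^\epsilon|$ term) are precisely the paper's Cases 1--3. The only organizational difference is that the paper writes $\tilde I^\epsilon=I_1^\epsilon+I_2^\epsilon$ and lower-bounds $I_2^\epsilon$ in one pass rather than keeping an exact pairwise identity; this is immaterial.

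There is, however, one genuine gap: your justification of $c_i^\epsilon\leq |D_i^\epsilon|/(2\pi)$, which you need \emph{exactly} in cases (a)--(b) in order to conclude $T_{ij}+T_{ji}\geq 0$ and hence $\tilde I^\epsilon\geq 0$. You derive it ``to leading order'' from $|D_i^\epsilon|/\epsilon=L_i\int_{S_i}\gamma_i\,d\alpha+O(\epsilon)$, $c_i^\epsilon/\epsilon=\beta_i+O(\epsilon)$ and the bound $L_i^2\bigl(\int_{S_i}\gamma_i\,d\alpha\bigr)\bigl(\int_{S_i}\gamma_i^{-1}\,d\alpha\bigr)\geq 4\pi|U_i|$. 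This only gives $\frac{|D_i^\epsilon|}{2\pi}-c_i^\epsilon\geq \mu_i\,\epsilon-C\epsilon^2$ with $\mu_i\geq 0$, and in the degenerate case where $\Gamma_i$ is a circle with constant $\gamma_i$ --- exactly the configuration the first claim must accommodate, e.g.\ nested concentric circles with constant strengths --- one has $\mu_i=0$ and the $O(\epsilon^2)$ remainder has no sign, so you only obtain $T_{ij}+T_{ji}\geq -C\epsilon$ and $\tilde I^\epsilon\geq -C\epsilon$, not $\tilde I^\epsilon\geq 0$. (Your quantitative second claim survives with $c_0$ halved for small $\epsilon$, but the first claim does not follow.) The fix is the one the paper uses and which you already have available: since $p^\epsilon\equiv c_i^\epsilon$ on $\Gamma_i\subset\overline{D_i^\epsilon}$, continuity gives $c_i^\epsilon\leq\sup_{D_i^\epsilon}p^\epsilon$, and \eqref{ineq_sup} then yields $c_i^\epsilon\leq|D_i^\epsilon|/(2\pi)$ exactly, for every $\epsilon$, with equality precisely in the annulus case (consistent with the exact cancellation you note at the end). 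Replacing the asymptotic step by this closes the proof.
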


\begin{proof}
We start by decomposing $\tilde I^{\epsilon}$ as 
\[
\tilde I^{\epsilon} = \int_{D^{\epsilon}} \epsilon^{-1}x\cdot \nabla(\omega^{\epsilon}*\mathcal{N})dx + \int_{D^{\epsilon}}\epsilon^{-1}\nabla p^\epsilon\cdot \nabla(\omega^{\epsilon}*\mathcal{N})dx=:I^{\epsilon}_1 + I^{\epsilon}_2.
\] 
$I^\epsilon_1$ can be easily computed as 
\begin{equation}\label{I1}
\begin{split}
I^\epsilon_1  &= \frac{1}{2\pi\epsilon^2} \int_{D^\epsilon}\int_{D^\epsilon}\frac{x\cdot(x-y)}{|x-y|^2}dxdy  
=\frac{|D^\epsilon|^2}{4\pi\epsilon^2} = \frac{1}{4\pi\epsilon^2}\left(\sum_{i=1}^{n+m} |D^\epsilon_i|\right)^2
\end{split}
\end{equation}
where the second equality is obtained by exchanging $x$ with $y$ and taking the average with the original integral. As for $I^\epsilon_2$, we have
\begin{equation}\label{I21}
\begin{split}
 I^\epsilon_2 &=  \frac{1}{\epsilon}\int_{\partial D^\epsilon}p^\epsilon\nabla(\omega^\epsilon * \mathcal{N})\cdot nd\sigma - \frac{1}{\epsilon}\int_{D^\epsilon}p^\epsilon \omega^\epsilon dx \\
 & = -\frac{1}{\epsilon}\sum_{i=1}^nc^\epsilon_i\int_{\partial U_i}\nabla(\omega^\epsilon * \mathcal{N})\cdot nd\sigma - \frac{1}{\epsilon^2}\int_{D^\epsilon}p^\epsilon dx \\
& \geq -\frac{1}{\epsilon^2}\sum_{i=1}^n\sum_{j=1}^{n+m}\frac{|D^\epsilon_i|}{2\pi}\int_{U_i}1_{D^\epsilon_j} dx - \frac{1}{\epsilon^2}\sum_{i=1}^{n+m}\int_{D^\epsilon_i}p^\epsilon dx,
\end{split}
\end{equation} where the first equality follows  from the divergence theorem, the second equality follows from the boundary conditions \eqref{bdry_curve} and \eqref{bdry_loop} for $p^\epsilon$ (as well as the fact that $\partial U_i$ and $\partial D^\epsilon_i$ have opposite outer normals), and the last inequality follows from the divergence theorem as well as the inequality $c_i^\epsilon \leq \sup_{D^\epsilon_i}p \leq \frac{|D^\epsilon_i|}{2\pi}$ due to \eqref{ineq_sup}.

Let us denote $j\prec i$ if $i\in \left\{ 1,\ldots,n\right\}, j\in \{1,\dots, n+m\}$, $j\neq i$ and $\Gamma_j$ lies in the interior of the domain enclosed by $\Gamma_i$ (that is, $\Gamma_j \subset U_i$). If not, we denote $j\nprec i$. Note that for sufficiently small $\epsilon>0$, we have
\begin{equation}\label{j<i}
 \int_{U_i} 1_{D^\epsilon_j} dx =
 \begin{cases}
 |D^\epsilon_j| & \text{ if } j \prec i, \\
 0 & \text{ otherwise.}
 \end{cases} 
\end{equation}
Applying this to \eqref{I21} yields
 \begin{equation}\label{P_i2}
 \begin{split}
 I^\epsilon_2 &\geq -\frac{1}{2\pi \epsilon^2} \sum_{i,j=1}^{n+m}\mathbbm{1}_{j\prec i}\,  |D^\epsilon_i|  |D^\epsilon_j| - \frac{1}{\epsilon^2}\sum_{i=1}^{n+m}\int_{D^\epsilon_i}p^\epsilon_i dx\\
 &= -\frac{1}{4\pi \epsilon^2} \sum_{i,j=1}^{n+m}(\mathbbm{1}_{j\prec i}+ \mathbbm{1}_{i\prec j})\,  |D^\epsilon_i|  |D^\epsilon_j| - \frac{1}{\epsilon^2}\sum_{i=1}^{n+m}\int_{D^\epsilon_i}p^\epsilon_i dx
 \end{split}
 \end{equation}
 where in the first step we used that the $i=n+1,\dots,n+m$ terms have zero contribution in the first sum, due to the definition of $j\prec i$.

 Adding \eqref{I1} and \eqref{P_i2} together, we obtain
 \begin{equation}\label{P_i}
 \tilde I^\epsilon \geq \sum_{i=1}^{n+m} \underbrace{\frac{1}{\epsilon^2}\left(\frac{|D^\epsilon_i|^2}{4\pi}-\int_{D^\epsilon_i}p^\epsilon_idx\right)}_{=:A_i^\epsilon} 
  + \sum_{i,j=1}^{n+m}\underbrace{\frac{1}{\epsilon^2}\left(\mathbbm{1}_{i\neq j}-\left(\mathbbm{1}_{j\prec i}+\mathbbm{1}_{i\prec j}\right)\right)\frac{|D^\epsilon_i||D^\epsilon_j|}{4\pi}}_{=:B_{i,j}^\epsilon},
 \end{equation}
From \eqref{ineq_int}, it follows that $A_i^\epsilon\geq 0$ for all $i =1,\dots,n+m$, with equality achieved if and only if each $D^\epsilon_i$ is a disk or an annulus. Note that $B_{i,j}^\epsilon\geq 0$ as well for all $i$ and $j$, since for any $i\neq j$, at most one of $i\prec j$ and $j\prec i$ can hold. Putting these together yields that
$\tilde I^\epsilon\ge 0$ for any sufficiently small $\epsilon>0$. 

In the rest of the proof, we assume $\Gamma$ is \textbf{not} a union of nested circles with constant $\gamma_i$'s on each connected component. Therefore at least one of the following 3 cases must be true. In each case we aim to show that $\tilde I_\epsilon\geq c_0>0$, where $c_0$ is independent of $\epsilon$ for all sufficiently small $\epsilon>0$.

 \textbf{Case 1.} There exists some open curve $\Gamma_i$ that is not a loop.  In this case $D^\epsilon_i$ is simply-connected, and   $p^\epsilon=0$ on $\partial D^\epsilon_i$ by \eqref{bdry_curve}. Applying Proposition~\ref{prop_p_curve} to $p^\epsilon$ in $D^\epsilon_i$, we have $\sup_{D^\epsilon_i} p^\epsilon \leq C\epsilon^2$, where $C$ is independent of $\epsilon$. This leads to $\int_{D^\epsilon_i} p^\epsilon dx \leq C\epsilon^3$, since $|D^\epsilon_i| = O(\epsilon)$ by  \eqref{P_nonzero}. As a result, for the index $i$ we have
 \[
 A_i ^\epsilon= \frac{|D^\epsilon_i|^2}{4\pi \epsilon^2}-\epsilon^{-2} \int_{D^\epsilon_i}p^\epsilon_idx \geq \frac{L_i^2}{4\pi}\left(\int_{S_i}\gamma_i(\alpha)d\alpha\right)^2 - C\epsilon,
 \] 
 where we again used  \eqref{P_nonzero} in the second inequality. This gives that $A_i^\epsilon \geq \frac{L_i^2}{8\pi}(\int_{S_i}\gamma_i(\alpha)d\alpha)^2 > 0$ for all sufficiently small $\epsilon>0$.

\textbf{Case 2.} There exists some closed curve $\Gamma_i$ that is either not a circle, or $\gamma_i$ is not a constant. In this case we aim to show that $A_i^\epsilon \geq c_0>0$, and this will be done by finding good approximations (independent of $\epsilon$) for both terms in $A_i^\epsilon$. 
For the first term $\frac{|D^\epsilon_i|^2}{4\pi \epsilon^2}$, using \eqref{P_nonzero} we again have
\begin{equation}\label{def_ji}
\frac{|D^\epsilon_i|^2}{4\pi \epsilon^2} \geq  \frac{L_i^2}{4\pi}\left(\int_{S_i}\gamma_i(\alpha)d\alpha\right)^2- C\epsilon =: J_i - C\epsilon,
\end{equation}
where $J_i>0$ is independent of $\epsilon$. For the second term $\epsilon^{-2}\int_{D_i^\epsilon} p_i^\epsilon dx$, rewriting the integral using the change of variables $x=R_i^\epsilon(a,\eta)$ gives
\[
\epsilon^{-2}\int_{D_i^\epsilon} p_i^\epsilon dx =  \int_{S_i}\int_{-1}^0 \frac{p^\epsilon(R_i^\epsilon(\alpha,\eta))}{\epsilon} \frac{\det(\nabla_{\alpha,\eta}R_i^\epsilon)}{\epsilon} d\eta d\alpha.
\]
Recall that in Proposition~\ref{P_pdecomposition} we defined $\tilde{p}^{\epsilon}(R^\epsilon_i(\alpha,\eta)) :=  c^\epsilon_i(1+\eta)$ and $q_\epsilon$ such that $p^\epsilon - \tilde p_\epsilon = q_\epsilon$. By \eqref{P_claim16} and \eqref{c_beta}, for all $\alpha\in S_i$ and $\eta\in(-1,0)$ we have
\[
\left|\frac{p^\epsilon(R_i^\epsilon(\alpha,\eta))}{\epsilon} - \beta_i (1+\eta)\right| \leq \left|\frac{ p^\epsilon(R_i^\epsilon(\alpha,\eta))}{\epsilon} - \frac{c_i^\epsilon}{\epsilon}(1+\eta)\right| + \left|\frac{c_i^\epsilon}{\epsilon}-\beta_i\right| \leq C\epsilon,
\]
where $\beta_i := \frac{2|U_i|}{L_i\int_{S_i}\gamma_i^{-1}(\alpha)d\alpha}$ is defined in \eqref{P_betadef2}. Combining this with the expression of the determinant in \eqref{eq_det}, we have
 \[
 \begin{split}
 \epsilon^{-2}\int_{D_i^\epsilon} p_i^\epsilon dx &=    \int_{S_i}\int_{-1}^0 (\beta_i(1+\eta) + O(\epsilon)) (L_i \gamma_i(\alpha) + O(\epsilon)) d\eta d\alpha\\
 &\leq \frac{|U_i|}{\int_{S_i} \gamma_i^{-1}(\alpha) d\alpha} \int_{S_i} \gamma_i(\alpha) d\alpha + C\epsilon =: K_i + C\epsilon,
 \end{split}
 \]
 where $K_i$ is independent of $\epsilon$.
Putting this together with \eqref{def_ji} yields the following:
\begin{equation}\label{A_ineq}\begin{split}
A_i^\epsilon &\geq  J_i - K_i - C\epsilon\\
&= \frac{L_i^2}{4\pi} \frac{\int_{S_i}\gamma_i(\alpha)d\alpha}{\int_{S_i}\gamma_i^{-1}(\alpha)d\alpha}\left(\int_{S_i}\gamma_i^{-1}(\alpha)d\alpha \int_{S_i}\gamma_i(\alpha)d\alpha - \frac{4\pi |U_i|}{L_i^2}\right) - C\epsilon.
\end{split}
\end{equation}
Let us take a closer look at the two terms inside the parenthesis. For the first term, Cauchy-Schwarz inequality gives
\[
\int_{S_i}\gamma_i^{-1}(\alpha)d\alpha \int_{S_i}\gamma_i(\alpha)d\alpha \geq 1,
\]
with equality achieved if and only if $\gamma_i$ is a constant. For the second term, the isoperimetric inequality yields
\[
\frac{4\pi |U_i|}{L_i^2} \leq 1,
\]
(recall that $L_i = |\partial U_i|$), with equality achieved if and only $U_i$ is a disk. By the assumption of Case 2, at least one of the inequalities must be strict, thus the parenthesis on the right hand side of \eqref{A_ineq} is strictly positive (and independent of $\epsilon$). Therefore there exists some constant $c_0>0$ such that $\tilde I^\epsilon \geq A_i^\epsilon \geq c_0$ for all sufficiently small $\epsilon$.

\textbf{Case 3.} There exist $i\neq j$ such that $i\nprec j$ and $j\nprec i$. Then it is clear that for such $i,j$, $B_{i,j}^\epsilon$ in \eqref{P_i} is given by $B_{i,j}^\epsilon = \frac{|D^\epsilon_{i}||D^\epsilon_{j}|}{4\pi \epsilon^2}$. Hence \eqref{P_nonzero} gives
\[
B_{i,j}^\epsilon \geq L_i L_j \bigg( \int_{S_i}\gamma_i(\alpha)d\alpha\bigg)\bigg( \int_{S_j} \gamma_j(\alpha)d\alpha\bigg) - C\epsilon,
\]
which yields $\tilde I^\epsilon \ge \frac{1}{2}L_i L_j ( \int_{S_i}\gamma_i d\alpha)( \int_{S_j} \gamma_j d\alpha)>0$ for all sufficiently small $\epsilon>0$. 

This finishes our discussion on all 3 cases. To conclude, since $\Gamma$ is not a union of nested circles with constant $\gamma_i$'s on each connected component, at least one of the 3 cases must hold, and all of them lead to $\tilde I^\epsilon\geq c_0>0$.
 \end{proof}

The above proposition immediately leads to the following corollary for the $\Omega<0$ case. 

\begin{corollary}\label{cor2_i} Assume that $\Gamma_i$ and $\gamma_i$ satisfy \textbf{\textup{(H1)}}--\textbf{\textup{(H3)}} for $i=1,\dots,n+m$.  Let $I^\epsilon$ be defined as in \eqref{def_i_eps}, and assume $\Omega< 0$. Then we have
$
I^\epsilon\geq 0
$ for all sufficiently small $\epsilon>0$. In addition, if $\Gamma$ is \textbf{not} a union of concentric circles all centered at the origin with constant $\gamma_i$'s, there exists some $c_0>0$ independent of $\epsilon$, such that $ I^\epsilon > c_0 > 0$ for all sufficiently small $\epsilon > 0$.
\end{corollary}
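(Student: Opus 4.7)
The plan is to write $I^\epsilon = \tilde I^\epsilon + |\Omega| J^\epsilon$, where $J^\epsilon := \epsilon^{-1}\int_{D^\epsilon}(x+\nabla p^\epsilon)\cdot x\, dx$, and adapt the proof of Proposition~\ref{prop2_i} to show $J^\epsilon \geq 0$ (modulo an $O(\epsilon)$ correction) with strict positivity whenever some curve $\Gamma_i$ fails to be centered at the origin. Since $\tilde I^\epsilon \geq \sum_i A_i^\epsilon + \sum_{i,j} B_{i,j}^\epsilon \geq 0$ from Proposition~\ref{prop2_i}, combining these will give both parts of the corollary.

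The first task is to compute $J^\epsilon$. Integration by parts on $\int \nabla p^\epsilon \cdot x\, dx$, together with the boundary conditions \eqref{bdry_curve}--\eqref{bdry_loop} for $p^\epsilon$ and the identity $\int_{\Gamma_i} x\cdot \mathbf{n}\, d\sigma = -2|U_i|$ for closed curves (the minus sign coming from $\mathbf{n}$ pointing opposite to the outward normal $n_{U_i}$ of $U_i$), plus the bound $\int_{D^\epsilon} p^\epsilon\, dx = O(\epsilon^2)$ from Propositions~\ref{prop_p_curve}--\ref{P_pdecomposition}, yields
\[
J^\epsilon = \epsilon^{-1}\int_{D^\epsilon}|x|^2\, dx - 2\sum_{i=1}^n \epsilon^{-1}c_i^\epsilon |U_i| + O(\epsilon).
\]
A change of variables $x=R_i^\epsilon(\alpha,\eta)$ then gives $\epsilon^{-1}\int_{D_i^\epsilon}|x|^2\, dx = L_i\int_{S_i}|z_i|^2\gamma_i\, d\alpha + O(\epsilon)$, while \eqref{c_beta} provides $\epsilon^{-1}c_i^\epsilon = \beta_i + O(\epsilon)$ with $\beta_i = \frac{2|U_i|}{L_i\int\gamma_i^{-1}\, d\alpha}$, altogether producing
\[
J^\epsilon \geq \sum_{i=1}^{n+m}C_i + O(\epsilon), \qquad C_i := L_i\int_{S_i}|z_i|^2\gamma_i\, d\alpha - \mathbbm{1}_{i\le n}\,\frac{4|U_i|^2}{L_i\int_{S_i}\gamma_i^{-1}\, d\alpha}.
\]

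The crux is to show $C_i \ge 0$ for all $i$. For open curves $i>n$ this is trivial. For closed curves $i\le n$, I would chain two applications of Cauchy--Schwarz: the divergence theorem gives $\int_{\Gamma_i} x\cdot n_{U_i}\, d\sigma = 2|U_i|$, and the pointwise bound $|x\cdot n_{U_i}| \le |x|$ therefore forces $L_i\int_{S_i}|z_i|\, d\alpha = \int_{\Gamma_i}|x|\, d\sigma \ge 2|U_i|$; meanwhile $\bigl(\int_{S_i}|z_i|\, d\alpha\bigr)^2 \leq \int_{S_i}|z_i|^2\gamma_i\, d\alpha \cdot \int_{S_i}\gamma_i^{-1}\, d\alpha$ (Cauchy--Schwarz applied to the pair $|z_i|\gamma_i^{1/2},\,\gamma_i^{-1/2}$) combines with the previous inequality to give $L_i\int|z_i|^2\gamma_i \ge \frac{4|U_i|^2}{L_i\int\gamma_i^{-1}}$, i.e., $C_i \geq 0$. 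Equality in the first chain forces $n_{U_i}=x/|x|$ pointwise, making $\Gamma_i$ a circle centered at the origin; equality in the second chain forces $|z_i|\gamma_i$ constant, which together with $|z_i|$ constant forces $\gamma_i$ constant.

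Combining everything yields $I^\epsilon \geq \sum_i (A_i^\epsilon + |\Omega|C_i) + \sum_{i,j} B_{i,j}^\epsilon + O(\epsilon)$ with each term non-negative, establishing the first part. For the strict lower bound, if $\Gamma$ is not a union of circles all centered at the origin with constant $\gamma_i$, then either Cases 1--3 of Proposition~\ref{prop2_i}'s proof apply (giving some $A_i^\epsilon$ or $B_{i,j}^\epsilon \geq c_0 > 0$), or some $\Gamma_i$ is a circle with constant $\gamma_i$ not centered at the origin, in which case a direct calculation with $z_i(\alpha) = c_i + r_i(\cos 2\pi\alpha, \sin 2\pi\alpha)$ gives $C_i = 2\pi r_i \gamma_{0,i}|c_i|^2 > 0$. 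In all cases $I^\epsilon \geq c_0>0$. The hardest part will be verifying the two-step Cauchy--Schwarz chain yielding $C_i \geq 0$ and tracking the $O(\epsilon)$ corrections coming from Propositions~\ref{prop_p_curve}--\ref{P_pdecomposition}.
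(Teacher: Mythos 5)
Your decomposition $I^\epsilon=\tilde I^\epsilon+|\Omega|J^\epsilon$ is exactly the paper's, but you handle $J^\epsilon$ by a genuinely different route. The paper disposes of $J^\epsilon\geq 0$ by citing an exact, $\epsilon$-independent inequality from the companion paper ($\int_{D_i^\epsilon}(|x|^2+\nabla p^\epsilon\cdot x)\,dx\geq 0$, their Lemma~2.11), and then gets strictness only in the residual case of off-center annuli by computing $p^\epsilon$ explicitly there. You instead expand $J^\epsilon=\sum_i C_i+O(\epsilon)$ using the asymptotics of Propositions~\ref{prop_p_curve}--\ref{P_pdecomposition} (the integration by parts, the sign of $\int_{\Gamma_i}x\cdot\mathbf{n}\,d\sigma=-2|U_i|$, and the limit $c_i^\epsilon/\epsilon\to\beta_i$ all check out), and prove $C_i\geq 0$ by chaining $2|U_i|\leq\int_{\Gamma_i}|x|\,d\sigma$ with Cauchy--Schwarz applied to $|z_i|\gamma_i^{1/2}\cdot\gamma_i^{-1/2}$. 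This is correct, self-contained (it removes the external citation), structurally parallel to Case~2 of Proposition~\ref{prop2_i}, and your equality analysis and the explicit value $C_i=2\pi r_i\gamma_{0,i}|c_i|^2$ for an off-center circle agree with the paper's computation, so the strict lower bound $I^\epsilon\geq c_0>0$ goes through.

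There is one genuine (though small and fixable) gap: your argument does not actually prove the first assertion $I^\epsilon\geq 0$. In the fully radial case --- concentric circles centered at the origin with constant $\gamma_i$ --- every $A_i^\epsilon$, $B_{i,j}^\epsilon$ and $C_i$ vanishes, and your bound degenerates to $I^\epsilon\geq -C\epsilon$, which is weaker than $I^\epsilon\geq 0$. The paper avoids this because its Lemma~2.11 is an exact inequality valid for each fixed $\epsilon$, not an asymptotic one. To close the gap within your framework, observe that in this remaining case $p^\epsilon$ is explicit on each annulus $B(0,r_i+\epsilon\gamma_i)\setminus B(0,r_i)$, namely $p^\epsilon=-\tfrac12|x|^2+\tfrac12(r_i+\epsilon\gamma_i)^2$, so $|x|^2+\nabla p^\epsilon\cdot x\equiv 0$ and $J^\epsilon=0$ exactly; then $I^\epsilon=\tilde I^\epsilon\geq 0$ by Proposition~\ref{prop2_i}. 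You should also make explicit, in the equality discussion for the first Cauchy--Schwarz step, that $x\cdot n_{U_i}=|x|$ a.e.\ forces $|z_i|$ to be constant (via $\tfrac{d}{ds}|x|^2=2x\cdot\mathbf{s}=0$), though this characterization is not actually needed for the strict bound, which you correctly obtain from the direct computation of $C_k$.
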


\begin{proof}

Let us decompose $I^\epsilon$ as follows (recall the definition of $\tilde I^\epsilon$ in \eqref{def_i_eps_tilde})
\begin{equation}I^\epsilon = \tilde I^\epsilon +   (-\Omega) \left( \epsilon^{-1} \int_{D^\epsilon} (|x|^2 + \nabla p^\epsilon \cdot x) dx\right) =: \tilde I^\epsilon + \underbrace{(-\Omega)}_{>0} J^\epsilon. \label{J_2}
\end{equation}
Recall that Proposition~\ref{prop2_i} gives $\tilde I_\epsilon\geq c_0>0$ as long as $\Gamma$ is not a union of nested circles with constant $\gamma_i$'s. By \cite[Lemma 2.11]{GomezSerrano-Park-Shi-Yao:radial-symmetry-stationary-solutions}, we have 
\[
\int_{D_i^\epsilon} (|x|^2 + \nabla p^\epsilon \cdot x) dx\geq 0 \quad\text{ for any $i=1,\dots,n+m$,}
\] thus $J^\epsilon \geq 0$.
  Putting them together, and using the fact that $\Omega<0$, we know $I^\epsilon\geq c_0>0$ if $\Gamma$ is not a union of nested circles with constant $\gamma_i$'s. 

To finish the proof, we only need to focus on the case that  the  $\Gamma_i$'s are nested circles with constant $\gamma_i$'s, but not all of them are centered at the origin. Assume that there exists $k\in\{1,\dots,n\}$ such that $\Gamma_k$ is a circle with radius $r_k$ centered at $x_k \neq 0$. Since $\gamma_k$ is a constant,  $D_k^\epsilon$ is an annulus given by $B(x_k,r_k+\epsilon\gamma_k)\setminus B(x_k,r_k)$. The symmetry of $D_k^\epsilon$ about $x_k$ immediately leads to $p^\epsilon|_{D_k^\epsilon} = -\frac{1}{2}|x-x_k|^2 +\frac{1}{2} (r_k+\epsilon\gamma_k)^2.$ An elementary computation gives
\[
\epsilon^{-1}\int_{D^\epsilon_k} (|x|^2 + \nabla p^\epsilon \cdot x) dx =\epsilon^{-1} \int_{D^\epsilon_k} |x|^2 - (x-x_k)\cdot x dx = \epsilon^{-1} |x_k|^2 |D_k^\epsilon| \geq  2\pi r_k \gamma_k  |x_k|^2>0,
\]
where in the second-to-last step we used that $|D_k^\epsilon| = 2\pi\epsilon r_k \gamma_k + \pi\epsilon^2 \gamma_k^2$. Setting $c_0:=2\pi r_k \gamma_k  |x_k|^2$ gives $I^\epsilon \geq c_0>0$, thus we can conclude.\end{proof}

Now we are ready to prove Theorem~\ref{thmA}. Note that for $\Omega<0$, the symmetry result  immediately follows from Proposition~\ref{prop1_i} and Corollary~\ref{cor2_i}. For $\Omega=0$, Proposition~\ref{prop1_i}--\ref{prop2_i} already imply that a stationary vortex sheet with positive strength must be a union of nested circles with constant strength on each of them. To finish the proof, we only need to show that these nested circles must be concentric.

\begin{proof}[\textbf{\textup{Proof of Theorem~\ref{thmA}}}]
For a uniformly-rotating vortex sheet with $\Omega<0$, the symmetry result for $\Omega<0$ is a direct consequence of Proposition~\ref{prop1_i} and Corollary~\ref{cor2_i}. Next we focus on the stationary (i.e. $\Omega=0$) case.

Combining Propsitions~\ref{prop1_i}--\ref{prop2_i}, we obtain that $\Gamma$ is a union of nested circles, and $\gamma_i$ is constant on $\Gamma_i$ for all $i=1\ldots,n$. It remains to show that all $\Gamma_i$'s are concentric. Let us denote by $\mathbf{v}_i $ the contribution to the velocity field by $\Gamma_i$. Since $\Gamma_i$ is a circle with constant strength $\gamma_i$, a quick application of the divergence theorem yields that $\mathbf{v}_i \equiv 0$ in the open disk enclosed by $\Gamma_i$, whereas $\mathbf{v}_i(x) = \dfrac{\gamma_i L_i (x-x^0_i)^\perp}{2\pi |x-x^0_i|^2}$ in the open set outside $\Gamma_i$, where $x^0_i$ is the center of the circle $\Gamma_i$.

Without loss of generality, let us reorder the indices such that $\Gamma_i$ is nested inside $\Gamma_j$ for $i<j$. Towards a contradiction, let $k>1$ be such that $\Gamma_k$ is the first circle that is not concentric with $\Gamma_1$. 
From the above discussion, we know that $\mathbf{v}_i=0$ on $\Gamma_k$ for $i =k+1,\dots,n$ (since $\Gamma_k$ is nested inside $\Gamma_i$),
whereas for $i=1,\dots,k-1$ we have $\mathbf{v}_i = \dfrac{\gamma_i L_i (x-x^0_1)^\perp}{2\pi |x-x^0_1|^2}$ on $\Gamma_k$, since all these $\Gamma_i$'s have the same center $x^0_1$ and are nested inside $\Gamma_k$. Summing them up (and also using the fact that $\Gamma_k$ contributes zero normal velocity on itself, since it is a circle with constant strength), we have
\[
BR(x) \cdot \mathbf{n} = \sum_{i=1}^n  \mathbf{v}_i(x) \cdot \mathbf{n}=  \left(\sum_{i=1}^{k-1} \gamma_i L_i\right)\frac{(x-x^0_1)^\perp \cdot \mathbf{n}}{2\pi |x-x^0_1|^2} \quad\text{ on }\Gamma_k,
\]
 where the right hand side is not a zero function since $\Gamma_k$ has a different center from $x_1^0$. This causes a contradiction with the fact that $\omega = \omega_0$ is stationary. As a result, all $\Gamma_1,\dots, \Gamma_n$ must be concentric circles, finishing the proof.
\end{proof}

\color{black}

\section*{Acknowledgements}

JGS was partially supported by the European Research Council through ERC-StG-852741-CAPA. JP was partially supported by NSF through Grants NSF DMS-1715418, and NSF CAREER Grant DMS-1846745. JS was partially supported by NSF through Grant NSF DMS-1700180. YY was partially supported by NSF through Grants NSF DMS-1715418, NSF CAREER Grant DMS-1846745, and Sloan Research Fellowship. JGS would like to thank Toan Nguyen for useful discussions.

\bibliographystyle{abbrv}
\bibliography{references}

\begin{tabular}{l}
\textbf{Javier G\'omez-Serrano}\\
{Department of Mathematics} \\
{Brown University} \\
{Kassar House, 151 Thayer St.} \\
{Providence, RI 02912, USA} \\ \\
{and} \\ \\ 
{Departament de Matem$\grave{a}$tiques i Inform$\grave{a}$tica} \\
{Universitat de Barcelona} \\
{Gran Via de les Corts Catalanes, 585} \\
{08007, Barcelona, Spain} \\
{Email: javier\_gomez\_serrano@brown.edu, jgomezserrano@ub.edu} \\ \\
\textbf{Jaemin Park} \\
{School of Mathematics, Georgia Tech} \\
{686 Cherry Street, Atlanta, GA 30332} \\
{Email: jpark776@gatech.edu} \\ \\
\textbf{Jia Shi}\\
{Department of Mathematics}\\
{Princeton University} \\
{409 Fine Hall, Washington Rd,}\\
{Princeton, NJ 08544, USA}\\
{e-mail: jiashi@math.princeton.edu}\\ \\
\textbf{Yao Yao} \\
 {School of Mathematics, Georgia Tech}\\
 {686 Cherry Street, Atlanta, GA 30332}\\
 {Email: yaoyao@math.gatech.edu}\\
\end{tabular}

\end{document}